\crefname{section}{}{}
\crefname{part}{Part}{Parts}
\crefname{equation}{}{}
\crefname{align}{}{}
\crefname{enumi}{}{}
\newtheorem{thm}{Theorem}[section]
\crefname{thm}{Theorem}{Theorems}
\newaliascnt{cor}{thm}
\crefname{cor}{Corollary}{Corollaries}
\newaliascnt{prop}{thm}
\newtheorem{prop}[prop]{Proposition}
\crefname{prop}{Proposition}{Propositions}
\newaliascnt{lem}{thm}
\newtheorem{lem}[lem]{Lemma}
\crefname{lem}{Lemma}{Lemmas}
\newaliascnt{clm}{thm}
\newtheorem{clm}[clm]{Claim}
\crefname{clm}{Claim}{Claims}
\theoremstyle{definition}
\newaliascnt{defi}{thm}
\newtheorem{defi}[defi]{Definition}
\crefname{defi}{Definition}{Definitions}
\newaliascnt{ex}{thm}
\newtheorem{ex}[ex]{Example}
\crefname{ex}{Example}{Examples}
\theoremstyle{remark}
\newaliascnt{rem}{thm}
\newtheorem{rem}[rem]{Remark}
\crefname{rem}{Remark}{Remarks}
\newcommand{\cf}{\textit{cf.\ }}
\newcommand{\bfA}{\mathbf{A}}
\newcommand{\bfE}{\mathbf{E}}
\newcommand{\bA}{\mathbb{A}}
\newcommand{\bC}{\mathbb{C}}
\newcommand{\bE}{\mathbb{E}}
\newcommand{\bG}{\mathbb{G}}
\newcommand{\bL}{\mathbb{L}}
\newcommand{\bP}{\mathbb{P}}
\newcommand{\bQ}{\mathbb{Q}}
\newcommand{\bR}{\mathbb{R}}
\newcommand{\bZ}{\mathbb{Z}}
\newcommand{\cB}{\mathcal{B}}
\newcommand{\cD}{\mathcal{D}}
\newcommand{\cO}{\mathcal{O}}
\newcommand{\sB}{\mathscr{B}}
\newcommand{\sC}{\mathscr{C}}
\newcommand{\sS}{\mathscr{S}}
\newcommand{\sW}{\mathscr{W}}
\newcommand{\ol}{\overline}
\newcommand{\wt}{\widetilde}
\newcommand{\id}{\mathrm{id}}
\newcommand{\pr}{\mathrm{pr}}
\newcommand{\Gm}{\mathbb{G}_{\mathrm{m}}}
\newcommand{\sm}{\mathrm{sm}}
\newcommand{\ad}{\mathrm{ad}}
\newcommand{\et}{\mathrm{et}}
\newcommand{\sing}{\mathrm{sing}}
\newcommand{\Ab}{\mathrm{Ab}}
\newcommand{\an}{\mathrm{an}}
\newcommand{\Zar}{\mathrm{Zar}}
\newcommand{\equi}{\mathrm{equi}}
\newcommand{\rat}{\mathrm{rat}}
\DeclareMathOperator{\Spec}{Spec}
\DeclareMathOperator{\Hom}{Hom}
\DeclareMathOperator{\Ext}{Ext}
\DeclareMathOperator{\Ker}{Ker}
\DeclareMathOperator{\Ima}{Im}
\DeclareMathOperator{\Res}{Res}
\DeclareMathOperator{\Tr}{Tr}
\DeclareMathOperator{\Tor}{Tor}
\DeclareMathOperator{\CH}{CH}
\DeclareMathOperator{\Pic}{\mathrm{Pic}}
\DeclareMathOperator{\Gr}{Gr}
\DeclareMathOperator{\Ind}{Ind}
\DeclareMathOperator{\Cone}{Cone}
\DeclareMathOperator{\Kom}{Kom}
\DeclareMathOperator{\pVMHS}{pVMHS}
\DeclareMathOperator{\VMHS}{VMHS}
\DeclareMathOperator{\AVMHS}{AVMHS}
\DeclareMathOperator{\LocSys}{LocSys}
\title{Asymptotics of local height pairing}
\author{Yuta Nakayama}
\date{\today}
\begin{document}

\subjclass[2020]{Primary: 14G40; Secondary: 14C25, 14C30}
\begin{abstract}
    We discuss the asymptotics of the Archimedean part of the Arakelov intersection number.
    The theorem is motivated by recent conjectures and their proof strategy by Gao and Zhang on the Northcott property of the Beilinson--Bloch height pairing.
    Our method involves a homological algebra interpretation of the Archimedean height by Hain.
    This interpretation allows us to introduce motivic viewpoints using Deligne cohomology, cycle class maps and higher Chow groups.

    Especially, we compare the biextension by Hain and Brosnan--Pearlstein over \(\bC\) based on Poincar\'e line bundle and Hodge theory with the \(\Gm\)-biextension of Bloch and Seibold defined by two families of homologically trivial cycles on a generically smooth family of projective varieties over a smooth curve.
    Our comparison, a relative version of the work of Gorchinskiy, enhances his derived viewpoint on these biextensions.
    Especially when the family of varieties are smooth, the two constructions are related via derived regulator maps to Deligne cohomology, reinterpreted similarly to Beilinson's absolute Hodge cohomology, as well as the derived description of Hardouin's biextension that generalizes Poincar\'e line bundle by Hain.
    The comparison when the family defined over a smooth curve has a strongly semistable reduction further involves a simple monodromy computation using mixed Hodge modules.

    Along our discussion, we simplify the discussion of Bloch and Seibold, partly in the style of Gorchinskiy.
    For example, the symmetry of their biextension is proved more easily than their work.
\end{abstract}
\maketitle
\setcounter{tocdepth}{1}
\tableofcontents

\section*{Introduction}
Height functions have been a useful tool in arithmetic geometry in the study of rational points and algebraic cycles.
It has been so since the pioneering work of Weil.
He proved extended the Mordell--Weil theorem to the case of abelian varieties over number fields using his height machine.
N\'eron and Tate refined the theory of Weil to the N\'eron--Tate height pairing on abelian varieties.
This helped Faltings to prove the Mordell conjecture on the finiteness of rational points on smooth curves over number fields.
More generally, Beilinson and Bloch independently suggested a conjectural definition of height parings on homologically trivial cycles on smooth projective algebraic varieties over number fields.

Usually, a height pairing on cycles on varieties over a number field \(K\) is the sum of local heights over the places of \(K\).
This description is elementary in the case of the height of Weil.
In the case of the Beilinson--Bloch height, the Archimedean part is unconditionally known.
This definition involves the Green currents, and is analytic in nature.
The non-Archimedean part is conjectural in general, except the good reduction case.
In the last case, the local pairing is the intersection number of the closures of \(W\) and \(Z\) on a suitable model of \(X\) over the local ring at \(s\).

This paper concerns the asymptotic behavior of these local height pairings in a strongly semistable degenerating family over a curve.
We prove our result without any analysis.
The result is motivated by a conjecture of Gao and Zhang \cite{GZFam}.
They are concerned with the Northcott property of the Beilinson--Bloch height.
They provide a conjectural framework to prove this through Arakelov geometry.
Our asymptotic behavior is related with the nefness of their conjectural adelic line bundle.

We mention a few words about the proof before a more detailed introduction.
In the Archimedean case, Hain \cite[\S 3]{HainBiext} algebraically rephrased the analytic definition via Hodge theory and homological algebra.
The definition takes advantage of the notion of biextensions.
Biextensions, introduced by Grothendieck \cite[Expos\'e VII]{SGA7}, is a torsor over a direct product of two groups.
Our biextension is defined by the Deligne and absolute Hodge cohomology.
The proof is by comparing this with another biextension by Bloch \cite{BloBiext} defined by higher Chow groups via a cycle class map.
The biextension by Bloch is closer to the leading coefficient in the asymptotic behavior.
In fact, the coefficient is the non-Archimedean part of the height pairing defined with respect to the base curve, not the number field that we would have started with.

In fact, because our method is algebraic, it extends to the \(\ell\)-adic etale setting.

\subsection*{Overview of the main results}

Now we look more closely into the contents of this paper.
Let \(\pi\colon X\to S\) be a strongly semistable degeneration of smooth projective varieties over a smooth curve over a field \(k\).
Let \(W, Z\) be two generically homologically trivia cycles on \(X\) with respect to \(S\).
Assume that their codimensions \(m,n\) add up to \(1\) plus the relative dimension of \(\pi\).
On the special fiber \(X_s\) where \(W, Z\) have correct dimensions and \(W_s\) and \(Z_s\) do not intersect, there is a conjecturally defined local height pairing.
Also, Beilinson \cite{BeiHeiCyc} unconditionally defined a local height pairing \(\langle W, Z\rangle_{\mathrm{a}, s}\).
This is the easier analog of the conjectural construction, where a number field is replaced by a function field.

The definition of the local height pairing is unconditional when \(k = \bC\), so we begin our introduction in this case.
Take a closed point \(s\in S\).
Let \(t\) be a local parameter of \(S\) at \(s\).
For \(|t|<<1\), set \(h(t)\) to be the local height pairing at the fiber over the point corresponding to \(t\).

\begin{thm} \label{thm:intro}
    Assume the Hodge conjecture.
    Then \(h(t) - \langle W, Z\rangle_{\mathrm{a}, s}\log |t|\) is continuous and bounded.
\end{thm}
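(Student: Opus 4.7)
The plan is to realize both $h(t)$ and $\langle W, Z\rangle_{\mathrm{a}, s}\log|t|$ as outputs of two $\Gm$-biextensions on $S$, and to reduce the theorem to the comparison of these biextensions that occupies the bulk of the paper. Following Hain's homological-algebra reinterpretation, the Archimedean local height $h(t)$ is, for $|t|\ll 1$, the logarithm of the norm of a distinguished section of an admissible variation of $\Gm$-biextension $B^{\mathrm{H}}$ over a punctured disc around $s$, built out of the Deligne/absolute Hodge extension classes attached fiberwise to $W_t$ and $Z_t$. On the other side, the Bloch--Seibold construction applied to $W$ and $Z$ on $X\to S$ produces a $\Gm$-biextension $B^{\mathrm{BS}}$ over the full base $S$, together with a canonical meromorphic section whose order of vanishing at $s$ is, by the very definition of Beilinson's function-field pairing, equal to $\langle W, Z\rangle_{\mathrm{a}, s}$. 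Consequently the $\log$-norm of that section differs from $\langle W, Z\rangle_{\mathrm{a}, s}\log|t|$ by a continuous bounded function in a neighborhood of $s$.

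The heart of the argument is then the comparison $B^{\mathrm{H}}\simeq B^{\mathrm{BS}}$ developed in the paper. Over the smooth locus $S\setminus\{s\}$ this is a relative version of Gorchinskiy's identification, carried out via derived regulators to Deligne cohomology (reinterpreted in Beilinson's absolute-Hodge style) together with the derived description, due to Hain, of Hardouin's generalized Poincar\'e biextension. At the strongly semistable point $s$ one invokes the simple mixed-Hodge-module monodromy computation of the paper: the unipotent monodromy of $B^{\mathrm{H}}$ at $s$ matches, under the comparison, the vanishing order of the canonical section of $B^{\mathrm{BS}}$ at $s$. The Hodge conjecture enters precisely here, to guarantee that the absolute Hodge classes controlling $B^{\mathrm{H}}$ are represented by algebraic cycles, so that $B^{\mathrm{BS}}$ truly captures the Hodge-theoretic obstruction rather than a possibly larger object.

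Combining these ingredients with the Schmid/nilpotent-orbit expansion of the admissible biextension $B^{\mathrm{H}}$ at $s$, the $\log$-norm of its distinguished section takes the form $\langle W, Z\rangle_{\mathrm{a}, s}\log|t| + g(t)$, where $g$ collects the bounded pluriharmonic terms of the nilpotent orbit together with the difference of the canonical trivializations on the two sides of the comparison. Each of these pieces is continuous and bounded on a neighborhood of $s$, so $h(t) - \langle W, Z\rangle_{\mathrm{a}, s}\log|t|$ is continuous and bounded, as claimed.

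The main obstacle is the boundary step of the comparison: aligning the monodromy of the Hodge-theoretic biextension with the specialization data of the higher Chow class in the strongly semistable setting. This requires careful bookkeeping of the weight filtration on the limit mixed Hodge structure of $\pi$ at $s$ and of the contributions of the individual strata of the special fiber $X_s$, and it is precisely the step in which the Hodge conjecture is genuinely used; once this matching is in place, the remainder of the argument is a fairly formal consequence of the general asymptotic theory of admissible variations of biextension mixed Hodge structure.
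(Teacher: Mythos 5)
Your proposal follows the same overall strategy the paper uses: identify $h(t)$ as the height of Hain's blended extension $B^\circ_{W,Z}$, identify the leading coefficient with the divisor of the Bloch--Seibold section $\langle W,Z\rangle$, and reduce everything to the comparison theorem plus the general asymptotic theory of admissible variations (\cref{thm:ArchComp}, \cite[Property 18]{BPJump}, \cite[Lemma 6.1]{HdJAsym}). Two places, however, are glossed over in a way that hides real content.

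First, you describe the role of the Hodge conjecture rather vaguely, as ensuring that ``absolute Hodge classes controlling $B^{\mathrm{H}}$ are represented by algebraic cycles.'' In the paper the use is more pointed and more elementary: \cref{lem:Hodge} modifies $W$ by vertical components supported over $s$ to produce a $\bQ$-cycle $\hat W$, rationally equivalent to $W$ over $S_{\sm}$, whose singular Betti class restricts to zero on the special fiber $X_s$. The Hodge conjecture enters via the Clemens--Schmid sequence and polarizability, to realize a certain Hodge class in $\bigoplus_{Y_0\in\pi_0(Y)} H^{2m-1}_{Y_0(\bC)}(X(\bC),\bQ(m))$ by an algebraic cycle supported on the components of $X_s$. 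Without this modification neither \cite[Property 18]{BPJump} (boundedness) nor the leading-term identification can get off the ground, because $\wt\tau_s(B^\circ_{W,Z})$ need not vanish for the original $W$, and the divisor of $\langle W, Z\rangle$ on the original data can differ from $\langle W, Z\rangle_{\mathrm{a},s}$ by contributions from the wrong vertical components.

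Second, the claim that the order of vanishing of the Bloch--Seibold section at $s$ equals $\langle W,Z\rangle_{\mathrm{a},s}$ ``by the very definition of Beilinson's function-field pairing'' overstates. What Seibold computes (\cref{prop:FullCalc}) is that the divisor of $\langle \hat W, Z\rangle$ is $\pi_*(\hat W\cdot Z)$; Beilinson's local index $\langle W,Z\rangle_{\mathrm{a},s}$ is defined cohomologically via \cite[Lemma-Definition 2.1.1.c)]{BeiHeiCyc}. Matching the two requires the compatibility of the maps to motivic and \'etale cohomology used in those definitions (and the fact that $X_s(\bC)$ is homotopy equivalent to the preimage of a small disc, by \cite{ClemDegen}), and it is at this step that the sign/twist in \cref{thm:ArchCompSm} must be absorbed. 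Your sketch is otherwise consistent with the paper's route, including the reliance on the mixed-Hodge-module monodromy computation in \cref{thm:ArchComp} (which shows $\wt\tau_s(B^\circ_{W',Z'})=0$ via the intermediate extension $j_{!*}$ and \cite[Lemma 2.18]{BFNPANF}).
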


The assumption is partly to interpret \(\langle W, Z\rangle_{\mathrm{a}, s}\) motivically rather than inside a cohomology theory.
The assumption also enables us to use the development on mixed Hodge modules by Brosnan--Nie--Fang--Pearlstein \cite{BFNPANF}.
The motivic formulation enables us to prove the theorem by comparing biextensions as we said.
The same theorem is in the work \cite[Theorem 6.7]{SongAsym} of Song with a different analytic proof.
Holmes and de Jong \cite{HdJAsym} proved this result when \(m = n = 1\) using more classical explicit methods including theta functions.
Also, Wang \cite{WangAsym} expressed similar asymptotics in terms of analysis.

\subsection*{The biextension by Bloch}

Toward the overview of our proof, we first review the biextension by Bloch and Seibold.
Suppose that we have a family \(\pi\colon X\to S\) of smooth proper varieties over a smooth base.
Bloch constructed in \cite{BloBiext} a biextension that gives a line bundle on \(S\) out of two homologically trivial cycles on \(X\) using higher Chow groups except that the proofs of relevant moving lemmas turned out false by Suslin.
Later, Seibold \cite{SeiBier} extended the construction to the case of a generically smooth family of projective varieties over a smooth base.

More precisely, let \(\bfA^m(X/S)\) be the sheafification of the Zariski presheaf \((U\mapsto \CH^m_{\hom}(\pi^{-1}(U)))\) on \(S\), where \(\hom\) means the generically homologically trivial part with respect to \(U\).
Define \(\bfA^n(X/S)\) similarly.
Then, Bloch and Seibold constructed a \(\Gm\)-biextension of \(\bfA^m(X/S)\) and \(\bfA^n(X/S)\).
This is a \(\Gm\)-torsor over \(\bfA^m(X/S)\times \bfA^n(X/S)\) with some additional structures.

The Picard group is computed by the first cohomology of \(\Gm\).
Likewise, Grothendieck showed \cite[Expos\'e VII, Corollaire 3.6.5]{SGA7} that the isomorphism classes of \(\Gm\)-biextensions of two groups correspond to the isomorphism classes of \(\Gm\)-extensions of the derived tensor product of the two groups.
We first describe the class in \(\Ext^1(\bfA^m(X/S)\otimes^\bL \bfA^n(X/S), \Gm)\) that corresponds to the construction of Bloch and Seibold as a preliminary to the main portion of this paper.
Roughly, if \(z^m(*_X, -\bullet), z^1(*_S, -\bullet)\) and their analogs are the complex of sheaves defining the higher Chow groups on \(X\) and \(S\) and their open subsets, then the extension is defined by
\begin{align} \label{RoughBiext}    
    \pi_* z^m(*_X, -\bullet)\otimes \pi_* z^n(*_X, -\bullet) \to \pi_* z^{m+n}(*_X, -\bullet)\to z^1(*_S, -\bullet),
\end{align}
where the first morphism is the product of higher Chow groups and the second is the push-forward by \(\pi\), whose target is quasi-isomorphic to \(\Gm [1]\).

The case when \(S\) is the spectrum of a field has already been treated in \cite{GorBiext} by Gorchinskiy.
His construction of \(P_{HC}\), which was suggested by Bloch, recovers our construction in his case.
In fact, we do not fully generalize the construction by Gorchinskiy, resulting in a more elementary presentation.
For example, we do not use \(K\)-theory unlike him.
Also, we do not discuss Picard categories although \cref{prop:Calc} looks similar to some of his statements.
In particular, this proposition could be seen as a positive answer to \cite[Question 4.33]{GorBiext}.
We mention the component of his work on the Poincar\'e biextension later in this section.

We compare our argument with that of \cite{BloBiext, SeiBier}.
Those arguments are essentially the same, and our reasoning was inspired by that of Bloch.
However, we follow the steps to correctly sheafify his construction.
He often omits the details of those sheafifications.
Our work cares about the compatibility of the various constructions that Bloch has made, and the extension of the derived tensor product suits that purpose.

Our construction is slightly less explicit than papers of Bloch and Seibold.
For example, computing the extension class only pins down the isomorphism class of the biextension. 
As another example, aside from our use of \cite[Expos\'e VII, Corollaire 3.6.5]{SGA7}, Bloch uses the relative higher Chow groups by Landsburg \cite{LandRelChow}, which might be understood analogously to relative homology groups.
The relative higher Chow groups are defined just as the homology of the cone of a pull-back map between motivic complexes.
Actually, the relative higher Chow groups cannot be used in his context due to the lack of the regularity of a closed immersion.
Unlike this, our construction involves a cone of a more involved morphism.
In particular, it seems hard to construct sections of our biextensions for disjoint pairs of homologically trivial cycles as done by Bloch.
The difficulty is that we cannot silence some homotopy.
For this reason, we also present the construction by Bloch and Seibold briefly, but in an easier and more symmetric way than Bloch.
For example, we do not need the condition \((*)\) in the proof of Bloch \cite[Proposition 3]{BloBiext}.
Seibold chooses to be more explicit than us, constructing a ``cocycle data'' that gives the biextension.

However, restricting our attention to the \(\Gm\)-extension class has the following additional advantages.
First, our construction is well-defined immediately.
The previous work needed \cite[Proposition 3]{BloBiext} to ensure the well-definedness.
The proposition introduces the small etale site in the proof.
So, our construction abandons the etale site and only uses the small Zariski site as a by-product.
In short, our sheaf-theoretic construction is glued from the beginning on the small Zariski site.

Next, our procedures make it evident that our construction gives a biextension.
Before, we first needed to extend a sheaf of homologically trivial elements of Chow groups by \(\Gm\) for each of homologically trivial cycles with the complementary codimension.
We were then required to rearrange the extension as a biextension.
We combine these procedures together.
This combination also enables us to easily read off the symmetric nature of our construction.
The previous paper gave the author the impression that the symmetry was surprising.
Our shorter symmetry argument suggests that the symmetry arises naturally, although our argument works only for the isomorphism classes.
After all, we bring the symmetric nature of the biextension to the front of the construction.

The last two paragraphs also allow us to do without the Weil reciprocity law and homologically trivial cycles with disjoint supports.
These factors were previously used in the well-definedness argument as well as the symmetry argument.
See \cite[Lemma 2.8]{GorBiext} for another use of the law.
They seem to have been utilized in the discussion of Deligne pairings.
In our context, the Weil reciprocity law appears only when we need explicit sections of the biextension.

\subsection*{Comparison with Archimedean biextensions}

From now on, we shift our focus to another biextension by Hain in \cite[\S 3]{HainBiext} and its variants.
Over \(\bC\), he gave a \(\bC^\times\)-biextension of two intermediate Jacobians of Hodge structures coming from Betti cohomology when \(\pi\) is smooth.
More precisely, he interpreted the Poincar\'e line bundle over the intermediate Jacobians of \(R^{2m - 1}\pi_*\bZ(m)\) and \(R^{2n - 1}\pi_*\bZ(n)\) as a \(\Gm^\an\)-biextension.
Strictly speaking, the word ``biextension'' in \cite{HainBiext} means blended extensions, known as <<extensions panach\'ees>> in \cite[Expos\'e IX, \S 9.3]{SGA7} or as mixed extensions.
What we would call a biextension is the set of the isomorphism classes of blended extensions.
Hain discussed the latter only as a \(\Gm\)-torsor.
However, Hardouin published \cite[Proposition 4.4.1]{HarBiext} that in this kind of situation the set of the blended extensions form a biextension.
The point of Hain's construction is that it offers an algebraic reformulation of the Archimedean height pairing, originally defined using Green currents.

Here is a more precise description.
Suppose that \(S = \Spec \bC\) for simplicity.
Let
\[
    H\coloneqq H^{2m - 1}(X, \bZ(m))/\mathrm{tors}.
\]
The intermediate Jacobian of \(H\) (resp.~\(H^\vee = H^{2n - 1}(X, \bZ(n))/\mathrm{tors}\)) can be interpreted as the set of the isomorphism classes of the mixed Hodge structures \(A\) equipped with isomorphisms
\[
    \Gr_{\mathrm{W}}^0 A \xrightarrow{\simeq} \bZ, \quad \Gr_{\mathrm{W}}^{-1} A \xrightarrow{\simeq} H
\]
\[
    \textup{(resp.~}\Gr_{\mathrm{W}}^{-1} A \xrightarrow{\simeq} H^\vee, \quad \Gr_{\mathrm{W}}^{-2} A \xrightarrow{\simeq} \bZ(1)),
\]
such that other graded pieces of the weight filtration are zero.
Define \(\cB(H)\) to be the set of the isomorphism classes of the mixed Hodge structures \(B\) equipped with isomorphisms
\[
    \Gr_{\mathrm{W}}^0 B \xrightarrow{\simeq} \bZ, \quad \Gr_{\mathrm{W}}^{-1} B \xrightarrow{\simeq} H, \quad \Gr_{\mathrm{W}}^{-2} B \xrightarrow{\simeq} \bZ(1),
\]
such that other graded pieces of the weight filtration are zero.
Then the forgetful map from \(\cB(H)\) to the product of the two intermediate Jacobians exhibits \(\cB(H)\) as a \(\bC/\bZ \cong \bC^\times\)-biextension.
These constructions carry over to the case when \(H\) is a variation of mixed Hodge structures of weight \(-1\).
A similar construction works for mixed \(\bR\)-Hodge structures.
In this case, the intermediate Jacobians becomes a point, and there is a canonical isomorphism between the analog of \(\cB(H)\) and \(\bC/\bR \cong \bR\).
For disjoint homologically trivial cycles \(W, Z\) on \(X\) of codimensions \(m, n\) respectively, Hain constructed a specific element \(B_{W, Z} \in \cB(H)\).
The Archimedean height pairing equals the number obtained by forgetting to the real Hodge structure analog of \(\cB(H)\) and then shifting to \(\bR\).

Later, Lear \cite{LeaExt} studied the asymptotic behavior of the height pairing associated with the blended extensions.
This sparked the interest in the study of so-called Lear extensions.
Brosnan and Pearlstein \cite[Theorem 241]{BPJump} have completely defined the \(\bQ\)-line bundles which extend those obtained out of the biextension by Hain of the intermediate Jacobians of the smooth locus.

We show that biextensions and line bundles by Hain and by Brosnan--Pearlstein coincide with that by Bloch and Seibold.
When \(\pi\) is smooth, we compare the biextensions directly in \cref{sec:ArchCompSm}.
The comparison was claimed by de Jong \cite[Proposition 3.1]{deJongGrossSchoen}, but we write down a detailed argument.
We base ourselves on the derived regulator maps to Deligne cohomology.
Namely, the isomorphism class of the biextension by Hain roughly corresponds to
\begin{align} \label{RoughBiextD}
    R\pi_*\bZ(m)_{X/S}^\cD[2m] \otimes^\bL R\pi_*\bZ(n)_{X/S}^\cD [2n] \to R\pi_*\bZ(m+n)_{X/S}^\cD [2(m + n)] \to \bZ(1)_S^\cD [2],
\end{align}
where the first morphism is the product of Deligne cohomology relative to \(X\) over \(S\) and the second is the pushforward, whose target is quasi-isomorphic to \(\Gm^\an [1]\).
This morphism and \cref{RoughBiext} for the biextension by Bloch are intertwined by the cycle class map.

Historically, Bloch constructed the regulator map from higher Chow groups to Deligne cohomology in \cite[\S 4]{BloBei}.
It might have been well-known that it has a derived version.
However, we could not find a reference for the derived analog building upon purity and \(\bA^1\)-invariance except \cite[\S 8]{BSModulus} by Binda and Saito.
There are other explicit derived regulator maps using currents, especially by Goncharov and Kerr--Lewis--M\"uller-Stach \cite{GonReg, KLM}.
We use the regulator by Binda--Saito for our discussion.

For the application to the asymptotic behavior of Archimedean heights, we need to pin down the isomorphism between the two biextensions by way of comparing specific sections.
This means that we need to work more than just comparing the isomorphism classes of the biextensions.
In terms of homological algebra, this amounts to pinning down the homotopy when comparing \cref{RoughBiext,RoughBiextD}, unlike the original comparison that happened in the derived category.
We achieve this by taking the model of \cref{RoughBiext,RoughBiextD} using Godement resolutions.

We prove the comparison of the biextension and its sections constructed by Deligne cohomology and those by Hain in a more elementary fashion.
Our argument is a variant of that by Esnault--Vieweg \cite[Theorem 7.11]{EVDB} about equivalent definitions of Abel--Jacobi maps.
This part of the proof has a weaker version that works in the derived category, but it requires more preparations and is tangential to our argument here.

We mention relevant part of the work \cite{GorBiext} of Gorchinskiy.
He has two proofs, one close to our strategy and the other using analysis.
We comment on the former here.
First, He only uses \cite{KLM} unlike us.
Second, the argument is a weaker version that works in the derived category.
As a result, he uses the Hodge complex defined by Beilinson \cite{BeiAbs} to compare the biextensions.
We do not need these technicalities, although it is still possible to argue in the derived category without the Hodge complex.

When \(\pi\) is not smooth, the construction by Brosnan and Pearlstein of the \(\bQ\)-line bundle extending the line bundle specializes as follows.
Take generically homologically trivial cycles \(W, Z\) on \(X\) as before.
Set \(S_{\sm}\) to be an open subset of \(\pi\) over which \(\pi\) is smooth, and \(W, Z\) do not meet and have correct dimension in each fiber.
Let \(j\colon S_{\sm} \to S\) be the inclusion.
Set \(H\coloneqq R^{2m - 1}\pi_* \bZ(m)/\mathrm{tors}\), a variation of mixed Hodge structures considered over \(S_{\sm}\).
Let \(\nu, \omega\) be the image of the Abel--Jacobi maps applied to \(W, Z\).
These images are admissible normal functions over \(S_{\sm}\) with respect to its partial compactification \(S\), a notion defined by Saito \cite{SaitoANF}.
They define the sections of the intermediate Jacobians over \(S_{\sm}\).
Then roughly the subsheaf of \(j_*(B(H)_{\nu, \omega})\) consisting of bounded height around points in \(S\setminus S_{\sm}\) is a \(\bQ\)-line bundle over \(S\).

To obtain any meaningful result about the asymptotics at a point \(s\in S\), the vertical components of \(W, Z\) at \(s\) need to be determined by their horizontal components around \(s\).
To carry this out at the level of algebraic cycles, we need Hodge conjectures.
After the modification, we can show the isomorphism of the \(\bQ\)-line bundles by Seibold and those by Brosnan--Pearlstein.
In the process, we rephrase the condition of having bounded height in terms of Hodge theory of admissible variation of Hodge structures.
The key to prove the rephrased statement turns out to run the argument defining \(B_{W, Z}\), but in the correct category of perverse sheaves and mixed Hodge modules.
Then, by the comparison of sections of these line bundles that carry over from the smooth locus, our theorem is reduced to the computation of the divisor associated with the counterpart of \(B_{W, Z}\).
This computation is known by Seibold \cite[6.8 Satz]{SeiBier}.

\subsection*{Organization of this paper}
\cref{chap:Prep} is devoted to the preparations toward the following parts of the paper with the emphasis on higher Chow groups.
We recall notations and basic definitions, especially on higher Chow groups, in \cref{sec:Not}.
In \cref{sec:HomAlg}, we fix conventions around homological algebra.
In \cref{sec:MotDef} we recall the definitions and standard facts around higher Chow groups.
We give a detailed exposition to choose a convention on cubical higher Chow groups among many variants.
The next \cref{sec:Set} fixes the geometric settings for the rest of the paper here.

\cref{chap:BloBiext} starts the discussion corresponding to that of \cite{BloBiext}.
We first recall the vanishing of what is called \(\theta_W\) in the Bloch's paper \cite{BloBiext} in \cref{sec:Van}.
This result eventually allows us to modify \cref{RoughBiext} to \(\bfA^m(X/S)\otimes^\bL\bfA^n(X/S)\to \Gm[1]\) in \cref{sec:Constr}.
This gives an isomorphism class of a \(\Gm\)-biextension of the pair \((\bfA^m(X/S), \bfA^n(X/S))\) of subsheaves of generically homologically trivial elements in the sheaves of Chow groups.
Additionally, we review the calculation of the line bundle obtained after fixing two generically homologically trivial cycles in \cref{sec:Calc}.
This calculation has been done by Seibold \cite[6.8 Satz]{SeiBier}.
Also, we reformulate the construction of Bloch and Seibold \cite[Kapitel 5]{SeiBier} to construct the sections of our biextensions.

\cref{chap:Arch} compares the \(\Gm\)-biextension in \cref{chap:BloBiext} with \cite[\S 3]{HainBiext} by Hain and \cite[Theorem 241]{BPJump} by Brosnan and Pearlstein.
We begin with \cref{sec:pVMHS}, where we review pre-variations of mixed Hodge structures introduced by Fujino and Fujisawa \cite{FFVMHS>0}.
This notion helps us since the intermediate Jacobians appear as a sheaf version of \(\Ext^1\) in the category of pre-variations of mixed Hodge structures. 
In \cref{sec:Arch}, we review what Hain constructed as a biextension, and the work of Brosnan and Pearlstein that continues Hain's.
Building on the previous section, in \cref{sec:ArchCompSm}, we compare the biextensions by Bloch and Hain over \(\bC\) using the derived regulator maps of Binda and Saito.
Since Hain works when our family of projective varieties is smooth, this section pays attention only to that case.
We build on \cref{sec:ArchCompSm} to relate the constructions by Bloch and Seibold, and Brosnan--Pearlstein in \cref{sec:ArchComp}.

\subsection*{Acknowledgments}
The author thanks his advisor S.-W. Zhang for the academic support.
The author also thanks J.~Koizumi for pointing him to references regarding motivic subjects.
The author is grateful to N.~Katz and W.~Sawin for listening to or answering questions about etale cohomology.
The author thanks M.~Bars, A.~Beilinson, B.~Dirks, S.~G. Park and T.~Zhou for listening to or answering questions about Hodge theory.
The author appreciates the patience of S.~Kanda, J.~Krantz, G.~Nahm and A.~Tatsuoka who have listened to or answered questions on topology or complex geometry.
The author especially thanks G.~Nahm for isomorphisms in \cref{rem:homology}.

\part{Largely motivic preparations} \label{chap:Prep}

This part of the paper fixes notations and geometric settings.
The first \cref{sec:Not} is just a bunch of notations.
In \cref{sec:HomAlg}, we fix conventions around homological algebra.
We need this section because later in \cref{chap:Arch}, we need to upgrade various statements in the commutativity in the derived category to those that actually specify homotopy.
More specifically, \cref{lem:NoHtpy} goes into the coincidence of \cref{RoughBiext,RoughBiextD}.
Also, \cref{lem:3x3} goes into the comparison of \cref{RoughBiextD} and the biextension by Hain.
In this comparison, we perform the analogous argument to \cite[Theorem 7.11]{EVDB} that involves more cones.
To keep track of all these distinguished triangles, we need the derived version of the \(3 \times 3\) lemma.
We also include \cref{ex:HomTrivExSeq}, useful in the arguments with mixed Hodge modules.
In \cref{sec:MotDef}, we recall the definitions and standard facts around higher Chow groups.
Especially, we review the compatibilities of simplicial and cubical higher Chow groups.
The compatibilities are necessary because Bloch \cite{BloBiext} used simplicial higher Chow groups, while Binda and Saito use cubical ones.
These compatibilities and other reasons such as \cref{rem:propCalcSign} necessitate various sign conventions, including the quasi-isomorphism between \(z^1(S, -\bullet)\) and \(\Gm[1]\).

\section{General notations} \label{sec:Not}
Let \(\ol{*}\colon \bC\to \bC\) be the complex conjugation.
For a field \(k\), let \(\ol{k}\) denote an algebraic closure of \(k\).
If \(K\) is an extension of \(k\), the (transcendental) degree of the extension is written as \([K:k]\) (resp.~\(\Tr\deg_k K\)).
For a ring \(A\) and an \(A\)-module \(M\) of finite length, we decide that \(l_A(M)\) is the length of \(M\).
The symbol \(\LocSys(M)\) denotes the category of the local systems on \(M\) for a topological space \(M\).
For a map \(f\) of topological spaces, \(f_*\) and \(f^{-1}\) denote the direct and inverse image of sheaves of abelian groups.
We use similar notations for morphisms of sites.

We sometimes omit ``\(\Spec\)'' for affine schemes.
For a scheme \(T\), let \(\cO_T\) be the structure sheaf of \(T\).
For \(t\in T\), let \(k(t)\) be the residue field at \(t\).
If \(T\) is integral, then \(K(T)\) means its function field.
When \(T\) is over a field \(l\) that contains a subfield \(k\), we mean the Weil restriction of \(T\) by \(\Res_{l/k} T\).
For a cycle \(W\) on \(T\), let \(|W|\) be its support.
If \(T\) is a scheme of finite type over \(\bC\), equip \(T(\bC)\) with the classical topology.
Use the same notation for morphisms of schemes of finite type over \(\bC\) and the induced maps on the \(\bC\)-points.
When \(T\) is a complex manifold, equip it with the sheaf of the holomorphic functions \(\cO_T\).
Betti, Deligne and etale cohomology theories are denoted by \(H^\star\), \(H^\star_{\cD}\) and \(H^\star_\et\).
The notation \(H_\star\) stands for singular homology as discussed in \cref{sec:HomAlg}.

For a manifold \(M\) with boundary, let \(\partial M\) be the boundary.

\section{Homological algebra} \label{sec:HomAlg}

We first set the general notations for homological algebra.
Let \(\Ab\) be the category of abelian groups.
We mostly use cohomological notations for complexes.
For an integer \(p\) and a cohomological complex \(K\) in an abelian category, \(Z^p(K)\) (resp.~\(H^p(K)\)) denotes the \(p\)-th group of cocycles (resp.~cohomology group) of \(K\).
For a complex \(K\), we write \(\partial\), \(\partial_K\) or \(\partial_K^n\colon K^n\to K^{n+1}\) when \(K\) is cohomological, for the differential of \(K\).
If \(K\) is cohomological, \(\tau_{<n} A\), \(\tau_{\leqq n} A\) and \(\tau_{\geqq n} K\) (resp.~\(\sigma_{\leqq n} K\) and \(\sigma_{\geqq n} K\)) mean the canonical (resp.~brutal) truncations for an integer \(n\).
If \(K\to L\) is a morphism of cohomological complexes, then \(\Cone (K\to L)\) denotes the cone of the morphism \cite[p.~26]{HartsRes}.
In general, our sign conventions follow \cite{HartsRes}.
The derived tensor products of cohomological complexes are denoted by \(\otimes^\bL\).
We use the same notation for a double complex and its total complex.
For an abelian category \(\sC\), let \(\Kom(\sC), K(\sC), D(\sC)\) be the category, homotopy category and derived category of the complexes with terms in \(\sC\).
Especially, if \(\sC\) is the category of sheaves of \(\cO_T\)-modules for a ringed space \(T\), then the corresponding category of complexes, and homotopy and derived categories are denoted by \(\Kom(\cO_T)\), and \(K(\cO_T)\) and \(D(\cO_T)\).
If \(\sC\) is a category, let \(\Ind (\sC)\) be the category of ind-objects in \(\sC\).

We fix the convention around singular homology and Poincar\'e duality.
Let \(M\) be a compact oriented manifold.
Let \((C_\bullet(M), \partial)\) (resp.~\(C^\bullet(M)\)) be the homological (resp.~cohomological) complex of singular chains (resp.~cochains) on \(M\) with \(\bZ\)-coefficients.
Define the cohomological complex \({}'C^\bullet (M)\) with terms \(C_{-\bullet} (M)\) and differentials \((-1)^{q + 1} \partial \colon C_{-q} (M)\to C_{-q-1} (M)\).
This follows \cite[\S 2.3]{KLM}, not \cite{JannDel}.
Then the cap product at the end of \cite[p.~239]{Hatcher} with \(X\coloneqq M\) becomes a morphism of complexes.
The same is true for other cap products in \emph{loc. cit.}.
Now, suppose that \(\dim M\) is even.
Then the orientation of \(M\) gives
\[
    C^\bullet(M)\to {}'C^\bullet(M),
\]
which defines the Poincar\'e duality.
We also use variants of this construction.

\subsection*{Quasi-isomorphisms as a multiplicative system}
We review some of the fact that quasi-isomorphisms form a multiplicative system in the homotopy category of complexes.
The purpose is not to recall this fact, but to reveal the homotopy that appears in the proof.
We apply the content of this passage in a later passage in \cref{sec:ArchCompSm}, where we compare sections of biextensions, preserving the relationships of those sections by keeping track of various homotopy.

We discuss more specifically.
Let \(\sC\) be an abelian category.
Suppose that we have morphisms \(A\to B, A\to C\) of complexes with terms in \(\sC\).
We assume that \(A\to B\) is a quasi-isomorphism.
The fact that quasi-isomorphisms constitute a multiplicative system includes the following statement: there exists a diagram
\[
    \begin{CD}
        A @>>> B \\
        @VVV @VVV \\
        C @>>> D
    \end{CD}
\]
commutative up to homotopy, where \(C\to D\) is a quasi-isomorphism.
One possible \(D\) is 
\[
    D \coloneqq \Cone(\Cone(A\to B)[-1] \to A\to C).
\]
This construction invokes a canonical morphism \(C\to D\).
Also, we can give \(B\to D\) by
\[
    (0, -\id, 0)\colon B^p\to C^p\oplus B^p\oplus A^{p + 1} = D^p
\]
for each integer \(p\).

The point is that this construction is functorial in the category of complexes, not just in the homotopy category.
We record a precise formulation just in case.
\begin{lem} \label{lem:NoHtpy}
Take quasi-isomorphisms \(A\to B, A'\to B'\) and morphisms \(A\to C, A'\to C'\).
Let \(A\to A', B\to B', C\to C'\) be morphisms of complexes such that
\[
\begin{CD}
    B @<<< A @>>> C \\
    @VVV @VVV @VVV \\
    B' @<<< A' @>>> C'
\end{CD}
\]
commutes in the category of complexes, thereby needing no homotopy.
Put \(D'\coloneqq \Cone(\Cone(A\to B)[-1] \to A\to C)\).
Then
\[
\begin{CD}
    B @>>> D @<<< C \\
    @VVV @VVV @VVV \\
    B' @>>> D' @<<< C'
\end{CD}
\]
commutes as a diagram of complexes, again without homotopy.
\end{lem}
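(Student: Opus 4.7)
The plan is to verify that the construction $D = \Cone(\Cone(A\to B)[-1]\to A\to C)$ is strictly functorial in the pair $(A\to B,\ A\to C)$ in the category of complexes, not merely in the homotopy category. Once this is established, the two canonical morphisms $B\to D$ and $C\to D$ described just before the lemma will themselves be natural transformations, which immediately yields the two squares in the conclusion.

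First I would spell out $D$ termwise. Unwinding the iterated cone gives
\[
    D^p \;=\; C^p \oplus B^p \oplus A^{p+1},
\]
with a differential built out of $\partial_C$, $\partial_B$, $-\partial_A$ together with the two structure maps $f\colon A\to C$ and $g\colon A\to B$. The canonical morphisms are $B\to D,\ b\mapsto(0,-b,0)$ and $C\to D,\ c\mapsto(c,0,0)$, exactly as recorded in the paragraph preceding the lemma.

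Next I would define the candidate map $D\to D'$ componentwise by
\[
    (c,b,a)\ \longmapsto\ (\gamma(c),\ \beta(b),\ \alpha(a)),
\]
where $\alpha,\beta,\gamma$ denote the three given vertical morphisms $A\to A'$, $B\to B'$, $C\to C'$. The hypothesis that the cube commutes in the category of complexes, with no homotopies, says exactly that $\alpha,\beta,\gamma$ intertwine with $f,g$ and their primed analogues. Since $\partial_{D'}$ only involves $\partial_{A'},\partial_{B'},\partial_{C'}, f', g'$, the componentwise formula above defines a chain map without any correction term. The commutativity of the two squares in the conclusion is then immediate: the maps $B\to D$ and $C\to D$ are built only from zeros, identities and signs, and these clearly commute strictly with $\beta$ and $\gamma$.

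The only real obstacle is bookkeeping around signs: with the conventions of \cite{HartsRes}, one must track the signs appearing in the iterated cone differential, in the canonical map $B\to D$, and in the shift $[-1]$ consistently. Conceptually, however, $D$ is simply the value of a bifunctor on the category of commuting squares in $\Kom(\sC)$ applied to the data of the cube, and the lemma is just the corresponding naturality statement; there is no homological content beyond that.
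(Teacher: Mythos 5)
Your proof is correct and follows exactly the approach the paper itself indicates (but does not spell out): the iterated cone $D=\Cone(\Cone(A\to B)[-1]\to A\to C)$ is strictly functorial on the category of commuting triangles $B\leftarrow A\to C$ in $\Kom(\sC)$, because its differential and the two structural maps $B\to D$, $C\to D$ are assembled purely from $\partial_A$, $\partial_B$, $\partial_C$, the given morphisms, zeros, identities, and signs, all of which are intertwined strictly by the hypothesized commuting diagram. Your termwise identification $D^p=C^p\oplus B^p\oplus A^{p+1}$ and the formulas $b\mapsto(0,-b,0)$, $c\mapsto(c,0,0)$ match the paper's conventions, so the verification is just naturality; this is precisely what the lemma is recording.
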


\subsection*{Derived \(3\times 3\)-lemma}
Here, we review an explicit version of \(3\times 3\)-lemma.

\begin{lem} \label{lem:3x3}
Let \(\sC\) be an abelian category as before.
Let
\[
\begin{CD}
    A_1 @>>> B_1 \\
    @VVV @VVV \\
    A_2 @>>> B_2
\end{CD}
\]
be a commutative diagram in the category of the complexes with terms in \(\sC\).
Then an isomorphism of \(\Cone(\Cone(A_1\to B_1) \to \Cone(A_2\to B_2))\) and \(\Cone(\Cone(A_1\to A_2) \to \Cone(B_1\to B_2))\) is given by
\[
    A_1^{p + 2}\oplus B_1^{p+1}\oplus A_2^{p+1}\oplus B_2^p\to A_1^{p + 2}\oplus A_2^{p+1}\oplus B_1^{p+1}\oplus B_2^{p}, (a, b, c, d)\mapsto (-a, c, b, d).
\]
for each integer \(p\).
\end{lem}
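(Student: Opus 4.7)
The plan is to carry out the comparison by brute expansion using the Hartshorne cone convention $\Cone(f\colon X\to Y)^p=Y^p\oplus X^{p+1}$ with differential $d(y,x)=(\partial y+f(x),-\partial x)$. With this convention, both iterated cones in degree $p$ assemble the same four pieces $B_2^p,A_2^{p+1},B_1^{p+1},A_1^{p+2}$, but re-ordered and with the horizontal maps $A_i\to B_i$ and vertical maps $A_1\to A_2$, $B_1\to B_2$ entering different slots of the differential. Denoting these structure maps by $g_i$ and $v_A,v_B$, I would first write down both differentials explicitly: on the left iterated cone the inner differential $d_{\Cone(g_2)}$ sits in the first slot, the connecting arrow is the induced map $\Cone(g_1)\to\Cone(g_2)$ coming from $(v_A,v_B)$, and the shifted inner differential in the second slot picks up the usual outer sign; symmetrically for the right iterated cone.

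Next I would write down the candidate isomorphism exactly as in the statement and compare the two differentials component by component. A naive swap of the two middle pieces $B_1^{p+1}\leftrightarrow A_2^{p+1}$ almost works, but fails on the components hitting $A_1^{p+2}$: the pieces involving $g_1\colon A_1\to B_1$ and $v_A\colon A_1\to A_2$ disagree by an overall sign because, on the left, $g_1(a_1)$ enters through the shifted inner differential of $\Cone(g_1)$ with a minus sign while on the right it appears through the chain map $\Cone(v_A)\to\Cone(v_B)$ with a plus sign, and similarly for $v_A(a_1)$. Negating the $A_1^{p+2}$ coordinate reverses both discrepancies simultaneously, so the map $(a,b,c,d)\mapsto(-a,c,b,d)$ is a chain map. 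Bijectivity is immediate since it is a signed permutation on each $\sC$-component.

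The main, and essentially only, obstacle is the sign bookkeeping; there is no conceptual difficulty beyond the convention. I stress that the statement needs to be verified on the nose as complexes rather than up to homotopy: the existence of an isomorphism in $D(\sC)$ is standard, but the later application to \cref{RoughBiext} and \cref{RoughBiextD} in \cref{chap:Arch} requires an actual identification of complexes compatible with the four evident canonical maps out of the vertices of the square, and the formula above is the natural choice that respects all of them. I would record this naturality in the same line of computation so that the lemma can be invoked without reproving the signs each time.
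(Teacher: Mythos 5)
Your computation is correct: with Hartshorne's convention \(\Cone(f\colon X\to Y)^p = X^{p+1}\oplus Y^p\), \(d(x,y)=(-\partial x,\,f(x)+\partial y)\), one finds that the naive transposition of the middle two summands differs from a chain map precisely in the two terms \(g_1(a)\) and \(v_A(a)\) coming from the \(A_1^{p+2}\) slot (on the left they enter via the negated inner differential, on the right via the vertical chain map, and vice versa), and negating the \(A_1^{p+2}\) coordinate fixes both sign discrepancies while remaining consistent with \(\partial a\) in the first slot; bijectivity is clear. The paper states this lemma without proof, and your brute-force verification is the only natural argument; the one small caveat is that you initially quote the Hartshorne cone with its two summands transposed, but since you consistently revert to the ordering used in the statement when carrying out the comparison, no actual error results.
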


\subsection*{Distinguished triangles out of cohomology long exact sequences}
Again, let \(\sC\) be an abelian category.
We examine how long exact sequences obtained from distinguished triangles behave as triangles in \(D(\sC)\).
\begin{ex} \label{ex:HomTrivExSeq}
    Suppose that \(X\to Y\to Z\xrightarrow{+1}\) is a distinguished triangle in \(D(\sC)\).
    Let \(A\in \sC\).
    Take \(A\to X\in D(\sC)\) such that \(A\to H^0(X)\to H^0(Y)\) vanishes.
    By the last assumption, \(A\to X\to Y\) factors through \(A\to \tau_{<0} Y\).
    There exist morphisms
    \[
    \begin{CD}
        X @>>> Y @>>> Z @>{+1}>> \\
        @AAA @AAA @AAA \\
        A @>>> \tau_{<0} Y @>>> Z' @>{+1}>> \\
        @|      @VVV            @VVV \\
        A @>{f}>> H^{-1}(Y)[1] @>>> Z'' @>{+1}>>
    \end{CD}
    \]
    of distinguished triangles.
    The cohomology of the last row tells us that \(Z'' = H^{-1}(Z'')[1]\).
    These triangles induce commuting exact sequences
    \[
        \begin{CD}
            H^{-1}(X) @>>> H^{-1}(Y) @>>> H^{-1}(Z) @>>> H^0(X) @>>> H^0(Y) \\
            @.              @|              @AAA         @AAA   @. \\
            0         @>>> H^{-1}(Y) @>>> H^{-1}(Z') @>>> A @>>> 0 \\
            @.              @|              @VVV         @|     @. \\
            0         @>>> H^{-1}(Y) @>>> H^{-1}(Z') @>>> A @>>> 0.
        \end{CD}
    \]
    By the top and bottom rows, the pull-back of the short exact sequence
    \[
        0 \to H^{-1}(Y)/H^{-1}(X) \to H^{-1}(Z) \to H^0(X) \to H^0(Y)
    \]
    by \(A\to H^0(X)\) is isomorphic to the pushforward of the exact sequence corresponding to \(f\in\Ext^1(A, H^{-1}(Y))\).
\end{ex}

\section{Recollections on higher Chow groups} \label{sec:MotDef}

This section introduces the notation on higher Chow groups.
We first discuss the original simplicial definition.
For a nonnegative integer \(q\), let
\begin{align} \label{Delta^q}
    \Delta^q\coloneqq \Spec \bZ[T_0,\dots, T_q]/\left(\sum_{i = 0}^{q}T_i - 1\right).
\end{align}
For a scheme T, the \emph{faces} of \(\Delta^q_T\) are closed subschemes defined by equations of the form \(T_i = 0\) for \(i\in S\) using a subset \(S\subseteq \{0,1,\dots q\}\).

For an equidimensional scheme \(T\) and a nonnegative integer \(p,q\), the symbol \(\CH^p(T,q)\) denotes the higher Chow group defined as below.
Let \(z^p(T,q)\) be the abelian group of all the cycles of codimension \(p\) on \(\Delta^q_T\) that intersect properly with its faces.
They form a homological complex by carrying an integral closed subscheme \(Z\in z^p(T,\bullet)\) to
\[
    \partial Z\coloneqq\sum_{i = 0}^{q}(-1)^i Z\cap V(T_i=0),
\]
where \(V\) means the vanishing locus inside \(\Delta^q_T\).
The \(q\)-th homology of the complex is \(\CH^p(T,q)\).
We use the cohomological notation \(z^p(T,-\bullet)\) rather than the homological complex.
If \(Z\) vanishes by the differential, then let \([Z]\) denote the corresponding class in \(\CH^p(T,q)\).

\begin{ex} \label{ex:CH1(1)}
We have the isomorphism
\[
    \cO_T(T)^\times\simeq\CH^1(T, 1), \quad f\mapsto [V(T_0 + fT_1)]
\]
when \(T\) is uni-dimensional and regular, with \(V\) meaning the vanishing locus.
We fix this identification throughout the note.
\end{ex}

\begin{ex} \label{codim1}
The group \(\CH^p(T)\coloneqq\CH^p(T,0)\) is the usual Chow group.
We review a relevant construction in a simple case.
Suppose in this paragraph that \(T\) is regular, uni-dimensional and connected and that \(p = 1\).
We have the following construction of a morphism of cohomological complexes from \(z^1(T, -\bullet)\) to the variant
\[
    K_{1,T}\colon\dots \to 0\to K(T)^\times\xrightarrow{\mathrm{div}} z^1(T,0)\to 0 \to \dots
\]
of the Gersten complex, where \(K(T)^\times\) is the \(-1\)-st term.
The morphism is the identity on the \(0\)-th term.
On the \(-1\)-st term, we use
\[
    z^1(T,1)\to z^1(T,1)/\partial z^1(T,2)\to z^1(\Spec K(T), 1)/\partial z^1(\Spec K(T), 2) = K(T)^\times,
\]
whose last equality is \cref{ex:CH1(1)}.
The morphism of the complexes is a quasi-isomorphism.
\end{ex}

For the description of the intersection product, we introduce variants of the above complex.
Suppose \(W\) is a finite set of nonempty closed subsets of \(T\).
Then define \(z^p_W(T,q)\) to be the abelian group freely generated by all the integral subschemes \(Z\) of \(\Delta^q_T\) that belong to \(z^p(T,q)\) and intersect properly with all the elements in \(W\).
We also need sheafified version of these.
Let \(z^p(*_T,q)\) be the small Zariski sheaf
\[
    T\supseteq U\mapsto z^p(U,q)
\]
on \(T\).
Define the subsheaf \(z^p_W(*_T,q)\subseteq z^p(*_T,q)\) by
\[
    T\supseteq U\mapsto z^p_{W\cap U}(U,q),
\]
where \(W\cap U\coloneqq \{W_0\cap U\mid W_0\in W\}.\)

We need more notations for the intersection product.
Let \(z^p(T,q,r)\) be the set of the elements in \(z^p(T\times_\bZ \Delta^q \times_\bZ \Delta^r, 0)\) that intersect transversally with \(T\times_\bZ F \times_\bZ G\) for all the faces \(F\subseteq \Delta^q\) and \(G\subseteq \Delta^r\).
Set \(z^p(T,q,r)_\Delta\) be the set of the elements in \(z^p(T\times_\bZ \Delta^q \times_\bZ \Delta^r, 0)\) that intersect transversally with \(T\times_\bZ F\) for all the faces \(F\subseteq \Delta^q\times_\bZ \Delta^r\).
These form double complexes.
Check \cite[\S 8]{GLBK} for details.
We define sheaf versions \(z^p(*_T,q,r)\), \(z^p(*_T,q,r)_\Delta\) and their double complexes similarly.

If \(T\) is a regular equidimensional scheme of finite type over a field of characteristic \(p\geqq 0\), then the etale cycle class map is
\[
    *_\et\colon \CH^p(T,q)\to\prod_{\ell\neq p} H^{2p-q}_\et(T, \bZ_\ell(p))
\]
for nonnegative integers \(p,q\) and a prime number \(\ell\neq p\).
When \(T\) is a smooth quasi-projective scheme over \(\bC\), then we also have the cycle class maps
\[
    *_\sing \colon \CH^p(T, q)\to H^{2p-q}(T(\bC), \bZ(p)), \quad *_\cD \colon \CH^p(T,q)\to H^{2p-q}_\cD(T(\bC), \bZ(p))
\]
for Betti and Deligne cohomology \cite{BloBei}.

For \cref{sec:ArchCompSm}, we recall cubical higher Chow groups.
Put \(\square\coloneqq \bP^1_{\bZ}\backslash \{1\}\).
If \(T\) is a scheme, \emph{faces} of \(\square^q_T\) are closed subschemes of the form
\[
    T\times \prod_{i\in \{1,2,\dots,q\}} \epsilon(i)
\]
for \(\epsilon\colon \{1,2,\dots,q\}\to \{\{0\},\{\infty\}, \square\}\).
For an equidimensional scheme \(T\) of finite type over a field and a nonnegative integer \(p,q\), let \(\underline{z}_c^p(T,q)\) be the abelian group of the cycles \(Z\) on \(\square^q_T\) of codimension \(p\) that intersect properly with all the faces of \(\square^q_T\).
This defines a cubical object \cite[\S 1.2]{LevCub} of the category of abelian groups by associating \(0, \infty\in \square\) with \(0\) and \(1\) in \emph{loc. cit.}
Let \(z_c^p(T,\bullet)\) be the homological complex obtained by the construction of \cite[Definition 1.4]{LevCub}.
This is quasi-isomorphic to \(z_c^p(T,\bullet)_0\), a complex denoted by \(\underline{A}^0\) around \cite[Lemma 1.3]{LevCub} for \(\underline{A} = \underline{z}_c^p(T,\bullet)\).
We call the elements of the terms of \(\Ker(\underline{z}_c^p(T,\bullet)\to z_c^p(T,\bullet))\) degenerate cycles.
Our convention regarding the sign of the differential is the same as that of \cite{BSModulus, TotMilnorK} unlike \cite[\S 5.2]{KLM}\footnote{Our sign is used in \S 2.1 of the same paper, but probably we should ignore this.} or \cite[p.~276]{LevRevisit}.
As before, we prefer the cohomological notation \(z_c^p(T,-\bullet)\) to the homological notation.
Also, we have sheafified versions \(z_c^p(*_T, -\bullet)\) and \(z_c^p(*_T, -\bullet)_0\) as well.

A zigzag of quasi-isomorphisms between \(z^p(T, -\bullet)\) and \(z_c^p(T, -\bullet)\) is well-known (\cf \cite[Theorem 4.7]{LevRevisit}\footnote{The author suspects that \(\epsilon'\) and \(\epsilon''\) in \cite[p.~294]{LevRevisit} should be the other way around.}, \cite[Theorem 4.3]{BloCub}) for \(T\) of finite type over a field.
In this case, if \(\partial Z = 0\) for \(Z\in z_c^p(T,\bullet)\), then let \([Z]\in \CH^p(T,q)\) be the associated class by the following convention of the quasi-isomorphisms.

For an equidimensional scheme \(T\) of finite type over a field, let \(z^p(T, \Delta^a, \square^b)\) be the set of \(Z\in z^p(T\times \Delta^a\times \square^b, 0)\) whose intersection with \(T\times F\times G\) for all the faces \(F\subseteq \Delta^a\) and \(G\subseteq \square^b\) is proper, and is \(0\) when \(F = \Delta^a\) and the projection of \(G\) to some coordinate gives \(\{\infty\}\).
We have a total complex \(z^p(T, \Delta^{-\bullet}, \square^{-\bullet})\).
By the isomorphism from \(\bA^1\) to \(\square\) given by \(z\mapsto 1-z^{-1}\), the proof of \cite[Theorem 4.3]{BloCub} shows that the inclusions
\begin{align} \label{Delta2Mix}
    z^p(T, -\bullet)\to z^p(T, \Delta^{-\bullet}, \square^{-\bullet})
\end{align}
and
\begin{align} \label{Square2Mix}    
    z_c^p(T, -\bullet)_0\to z^p(T, \Delta^{-\bullet}, \square^{-\bullet})
\end{align}
are quasi-isomorphic.

\begin{ex} \label{ex:CH1(1)Mix}
Let \(p = 1\) in this paragraph.
When \(T\) is the spectrum of a field, \cite[Theorem 1]{TotMilnorK} says that the cubical higher Chow group \(\CH^1(T,1)\) is isomorphic to \(\cO_T(T)^\times\) by carrying \(a\in \cO_T(T)^\times\backslash\{1\}\) to \(\{a\}\in \underline{z}_c^1(T, 1)\).
This is compatible with \cref{ex:CH1(1),Delta2Mix,Square2Mix} by what follows.
For each \(a\in \cO_T(T)^\times\), consider
\[
    V((1-a^{-1})T_0 + a^{-1}X - 1)\in z^1(T, \Delta^1, \square^1),
\]
with \(T_0\) as in \cref{Delta^q} and the coordinate \(X\) for \(\square\backslash\{\infty\}\subset \bP^1\backslash\{\infty\} = \bA^1\).
The differential of the total complex carries this to 
\[
    (\{a\}, -V(T_0 + aT_1))\in z^1(T, \Delta^0, \square^1)\oplus z^1(T, \Delta^1, \square^0).
\]
This means that the element of the cubical higher Chow group defined by \(\{a\}\in \underline{z}_c^1(T, 1)\) coincides with \([V(T_0 + aT_1)]\in \CH^1(T, 1)\) when seen as the element of the \(-1\)-st cohomology of \(z^1(T, \Delta^0, \square^1)\).

More generally, suppose that \(T\) is a regular scheme of finite type over a field.
Then for each \(1\neq f\in \cO_T(T)^\times\), we can associate the closed subscheme of \(\square_T\) of codimension 1 by taking the base change of \(\{f\}\subset \bP^1_T\).
This defines an element of \(z_c^1(T, 1)\) by restricting the embedding \(T\xrightarrow{f} \bP^1_T\) to \(\square_T\).
By the discussion over \(K(T)\) above, this element coincides with the image of \(f\) under \cref{ex:CH1(1)} via \cref{Delta2Mix,Square2Mix}.
\end{ex}

We review that various constructions on these motivic complexes are compatible with \cref{Delta2Mix,Square2Mix}.
First, we discuss the intersection product.
This is briefly mentioned in \cite[p.~181]{TotMilnorK}.
We first define the product structure on \(z^p(T, \Delta^{-\bullet}, \square^{-\bullet})\).

For that, we need an exterior product.
Let \(T\) be an equidimensional scheme of finite type over a field \(k\).
Let \(z^p(T, \Delta^{a}, \Delta^{a'}, \square^{b})\) be the set of \(Z\in z^p(T\times \Delta^a\times \Delta^{a'}\times \square^b, 0)\) whose intersection with \(T\times F\times F'\times G\) for all the faces \(F\subseteq \Delta^a\), \(F'\subseteq \Delta^{a'}\) and \(G\subseteq \square^b\) is proper, and is \(0\) when \(F=\Delta^a\), \(F'=\Delta^{a'}\) and the projection of \(G\) to some coordinate gives \(\{\infty\}\).
Again, \(z^p(T, \Delta^{-\bullet}, \Delta^{-\bullet}, \square^{-\bullet})\) means the total complex.
If \(T'\) is also an equidimensional scheme of finite type over \(k\), then for nonnegative integers \(p,p'\) we have
\[
    z^p(T, \Delta^{-\bullet}, \square^{-\bullet})\otimes_\bZ z^{p'}(T', \Delta^{-\bullet}, \square^{-\bullet})\to z^{p + p'}(T\times_k T', \Delta^{-\bullet}, \Delta^{-\bullet}, \square^{-\bullet})
\]
by \(Z\otimes Z'\mapsto (-1)^{a'b}Z\times_k Z'\) for \(Z\in z^p(T, \Delta^{a}, \square^{b})\) and \(Z'\in z^{p'}(T', \Delta^{a'}, \square^{b'})\).
It induces the following commutative diagrams:
\[
    \begin{CD}
        z^p(T, -\bullet)\otimes_\bZ z^{p'}(T', -\bullet)@>>> z^{p + p'}(T\times_k T', -\bullet, -\bullet)\\
        @V{\cref{Delta2Mix}}VV    @V{\cref{Delta2Mix}}VV\\
        z^p(T, \Delta^{-\bullet}, \square^{-\bullet})\otimes_\bZ z^{p'}(T', \Delta^{-\bullet}, \square^{-\bullet})@>>> z^{p + p'}(T\times_k T', \Delta^{-\bullet}, \Delta^{-\bullet}, \square^{-\bullet})\\
        @A{\cref{Square2Mix}}AA    @A{\cref{Square2Mix}}AA\\
        z_c^p(T, -\bullet)_0\otimes_\bZ z_c^{p'}(T', -\bullet)_0@>>> z_c^{p + p'}(T\times_k T', -\bullet)_0.
    \end{CD}
\]

Define \(z^p(T, \Delta^{a}, \Delta^{a'}, \square^{b})_\Delta\) to be the set of \(Z\in z^p(T\times \Delta^a\times \Delta^{a'}\times \square^b, 0)\) whose intersection with \(T\times F\times G\) for faces \(F\subseteq \Delta^a\times\Delta^{a'}\) in the sense of \cite[\S 8]{GLBK} and faces \(G\subseteq \square^b\) is proper, and is \(0\) when \(F = \Delta^a\times\Delta^{a'}\) and the projection of \(G\) to some coordinate gives \(\{\infty\}\).
\begin{lem}
    Let \(T\) be a equidimensional scheme of finite type over a field.
    Set \(p\) to be a nonnegative integer.
    There the inclusion
    \[
        z^p(T, \Delta^{-\bullet}, \Delta^{-\bullet}, \square^{-\bullet})_\Delta \to z^p(T, \Delta^{-\bullet}, \Delta^{-\bullet}, \square^{-\bullet})
    \]
    is a quasi-isomorphism.
\end{lem}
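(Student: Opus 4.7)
The plan is to reduce the statement, via a filtration in the $\square$-direction, to the analogous comparison without $\square$, which is essentially contained in \cite[\S 8]{GLBK}.

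First, both complexes are total complexes of tri-complexes indexed by the two $\Delta$ indices and the $\square$ index, and the inclusion respects this structure. Filter both sides by the $\square$-degree. Since higher Chow complexes are concentrated in non-positive cohomological degree, the resulting spectral sequences converge, so it suffices to show that for each fixed $b \geqq 0$ the inclusion of bicomplexes
\[
    z^p(T, \Delta^{-\bullet}, \Delta^{-\bullet}, \square^b)_\Delta \to z^p(T, \Delta^{-\bullet}, \Delta^{-\bullet}, \square^b)
\]
is a quasi-isomorphism.

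Setting $T' \coloneqq T \times \square^b$ and letting $\cW$ denote the finite family of closed subsets of $T'$ of the form $T \times G$ for faces $G \subseteq \square^b$, together with the auxiliary vanishing condition when $G$ projects to $\{\infty\}$ in some coordinate, this reduces further to the following: for any equidimensional scheme $T'$ of finite type over a field and any such family $\cW$, the inclusion
\[
    z^p_{\cW}(T', \Delta^{-\bullet}, \Delta^{-\bullet})_\Delta \to z^p_{\cW}(T', \Delta^{-\bullet}, \Delta^{-\bullet})
\]
is a quasi-isomorphism, where the subscript $\cW$ records the fixed supplementary conditions. This is the double-simplicial Bloch moving lemma with supports: iteratively applying Bloch's moving procedure in the two $\Delta$-directions produces a homotopy from a cycle in the ambient complex to one in the subcomplex while preserving the $\cW$-conditions, which lie in directions orthogonal to the moving.

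The main technical obstacle I expect is confirming that the moving homotopies are compatible with the vanishing-at-$\{\infty\}$ condition inherited from $\square^b$. Since this condition only constrains the pullback of a cycle along certain inclusions of codimension-$b$ subvarieties of $T \times \square^b$, and the moving occurs in the $\Delta$-directions, the vanishing propagates through the homotopy without issue. Alternatively, once the $\square$-data has been absorbed into $T'$ and $\cW$, the statement can be deduced directly from \cite[\S 8]{GLBK} together with a standard version of Bloch's moving lemma with supports.
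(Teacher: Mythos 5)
Your approach is genuinely different from the paper's, and it contains a gap at the crucial moving-lemma step. The paper filters by the two $\Delta$-degrees: for each fixed $(a,a')$ it compares both $z^p(T,\Delta^a,\Delta^{a'},\square^\bullet)_\Delta$ and $z^p(T,\Delta^a,\Delta^{a'},\square^\bullet)$ to $z_c^p(T\times\Delta^a\times\Delta^{a'},\bullet)_0$, treating the $\Delta$-face conditions (product faces in one case, triangulated faces in the other) as \emph{supports} in the base of a single cubical complex, so that both arrows are quasi-isomorphisms by a single cited statement, Bloch's cubical moving lemma with supports \cite[Lemma 4.3.1]{BloCub} transported along $\bA^1\to\square$. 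You filter in the $\square$-degree instead, which pushes the $\square^b$-face data into the base $T'$ and reduces to a bi-simplicial $(\,_\Delta\!\to$ non-$_\Delta)$ comparison with supports.

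The gap is that the bi-simplicial moving lemma you invoke — proper intersection with the triangulated faces of $\Delta^a\times\Delta^{a'}$ while respecting a fixed family of support conditions in the base — is not something you prove or cite precisely. \cite[Lemma 8.1]{GLBK} gives the bi-simplicial comparison \emph{without} supports; adding supports is exactly what would need an argument, and ``a standard version of Bloch's moving lemma with supports'' is not a reference that directly covers the double-simplicial setting. Your secondary worry about the vanishing-at-$\{\infty\}$ condition is actually not a problem (it is the requirement that the projection of the cycle to $T'$ avoid certain hyperplanes $T\times G$, which any fiberwise homotopy in the $\Delta$-directions preserves), but the proper-intersection part of the $\cW$-conditions does require the supports to be threaded through the moving homotopy, and that is precisely what needs to be established. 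The paper sidesteps this entirely by moving in the cubical direction, where \cite[Lemma 4.3.1]{BloCub} already has supports built in.
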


\begin{proof}
    As in the proof of \cite[Lemma 8.1]{GLBK}, for nonnegative integers \(a,a'\), it suffices to show that
    \[
        z^p(T, \Delta^a, \Delta^{a'}, \square^\bullet)_\Delta\to z_c^p(T\times\Delta^a\times\Delta^{a'}, \bullet)_0
    \]
    and
    \[
        z^p(T, \Delta^a, \Delta^{a'}, \square^\bullet)\to z_c^p(T\times\Delta^a\times\Delta^{a'}, \bullet)_0
    \]
    are quasi-isomorphisms.
    We can use \cite[Lemma 4.3.1]{BloCub} via the isomorphism \(\bA^1\to\square\) given by \(z\mapsto 1-z^{-1}\).
\end{proof}

Let \(T\) be an equidimensional scheme of finite type over a field.
Take a nonnegative integer \(p\).
In the following commutative diagram, all the arrows are inclusions:
\[
\begin{CD}
    z^{p}(T, -\bullet, -\bullet)@<<< z^{p}(T, -\bullet, -\bullet)_\Delta\\
    @VVV   @VVV\\
    z^p(T, \Delta^{-\bullet}, \Delta^{-\bullet}, \square^{-\bullet})@<<< z^p(T, \Delta^{-\bullet}, \Delta^{-\bullet}, \square^{-\bullet})_\Delta\\
    @AAA   @AAA\\
    z_c^{p}(T, -\bullet)_0@= z_c^{p}(T, -\bullet)_0.
\end{CD}
\]

Finally, we define the triangulation
\[
    z^p(T, \Delta^{-\bullet}, \Delta^{-\bullet}, \square^{-\bullet})_\Delta\to z^p(T, \Delta^{-\bullet}, \square^{-\bullet})
\]
as in the paragraph of \cite{GLBK} just before Lemma 8.1.
This is compatible with the simplicial triangulation, namely
\[
    \begin{CD}
        z^p(T, -\bullet, -\bullet)_\Delta @>>> z^p(T,-\bullet, -\bullet)\\
        @VVV   @VVV\\
        z^p(T, \Delta^{-\bullet}, \Delta^{-\bullet}, \square^{-\bullet})_\Delta @>>> z^p(T, \Delta^{-\bullet}, \square^{-\bullet}).
    \end{CD}
\]
is commutative.
So is the following:
\[
    \begin{CD}
        z_c^{p}(T, -\bullet)_0@= z_c^{p}(T, -\bullet)_0\\
        @VVV   @VVV\\
        z^p(T, \Delta^{-\bullet}, \Delta^{-\bullet}, \square^{-\bullet})_\Delta @>>> z^p(T, \Delta^{-\bullet}, \square^{-\bullet}).
    \end{CD}
\]

To sum up, the exterior products defined so far are compatible with each other:
\begin{prop}
    Take equidimensional schemes \(T, T'\) of finite type over a field \(k\).
    Let \(p, p'\) be nonnegative integers.
    We have a commutative diagram
    \[
        \begin{CD}
            z^p(T, -\bullet)\otimes_\bZ z^{p'}(T', -\bullet)@>>> z^{p + p'}(T\times_k T', -\bullet)\\
            @V{\cref{Delta2Mix}}VV    @V{\cref{Delta2Mix}}VV\\
            z^p(T, \Delta^{-\bullet}, \square^{-\bullet})\otimes_\bZ z^{p'}(T', \Delta^{-\bullet}, \square^{-\bullet})@>>> z^{p + p'}(T\times_k T', \Delta^{-\bullet}, \square^{-\bullet})\\
            @A{\cref{Square2Mix}}AA    @A{\cref{Square2Mix}}AA\\
            z_c^p(T, -\bullet)_0\otimes_\bZ z_c^{p'}(T', -\bullet)_0@>>> z_c^{p + p'}(T\times_k T', -\bullet)_0
        \end{CD}
    \]
    in the derived category of abelian groups.
\end{prop}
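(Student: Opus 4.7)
The plan is to reduce the statement to the commutative diagram displayed immediately after the definition of the mixed exterior product, whose right column consists of the three double-$\Delta$-type complexes $z^{p+p'}(T\times_k T', -\bullet, -\bullet)$, $z^{p+p'}(T\times_k T', \Delta^{-\bullet}, \Delta^{-\bullet}, \square^{-\bullet})$, and $z_c^{p+p'}(T\times_k T', -\bullet)_0$. I would compose this diagram on the right with the three triangulation maps collapsing these targets into the single-$\Delta$-type complexes appearing in the statement.

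First I would observe that all three exterior products factor through the respective $\Delta$-subcomplexes. Indeed, for $Z\in z^p(T, \Delta^a, \square^b)$ and $Z'\in z^{p'}(T', \Delta^{a'}, \square^{b'})$ meeting the usual faces properly, the product $Z\times_k Z'$ automatically meets every face of $\Delta^a\times \Delta^{a'}$ properly, because such a face is a product of faces of $\Delta^a$ and $\Delta^{a'}$; the cubical product already targets the $\Delta$-subcomplex by the second triangulation-compatibility square. By the preceding lemma and its simplicial analogue \cite[Lemma 8.1]{GLBK}, these $\Delta$-inclusions are quasi-isomorphisms, hence invertible in $D(\Ab)$.

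Applying the three triangulation maps---the simplicial one of \cite[\S 8]{GLBK}, the mixed one defined just before the statement, and the identity for the cubical complex---yields a diagram whose right column is exactly $z^{p+p'}(T\times_k T', -\bullet)$, $z^{p+p'}(T\times_k T', \Delta^{-\bullet}, \square^{-\bullet})$, and $z_c^{p+p'}(T\times_k T', -\bullet)_0$. The two triangulation-compatibility squares displayed immediately before the statement show that the induced vertical maps of this column agree with those coming from \cref{Delta2Mix} and \cref{Square2Mix}. Pasting this square onto the $\Delta$-restricted form of the earlier big diagram and inverting the $\Delta$-quasi-isomorphisms produces the required commutative diagram in $D(\Ab)$.

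The only bookkeeping concerns the sign $(-1)^{a'b}$ in the mixed exterior product: one must check that it is preserved under the simplicial triangulation in the same manner as the classical Eilenberg--Zilber shuffle sign on the simplicial side. Since the $\square$ factors are untouched by the triangulation, this reduces to the standard sign convention for the simplicial cross product of cycles, so no essential obstacle arises beyond this routine verification.
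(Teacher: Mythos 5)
Your overall strategy matches what the paper (implicitly) intends: paste together the big commutative diagram landing in the double-simplex complexes, the quasi-isomorphism inclusions of the $\Delta$-subcomplexes, and the two triangulation-compatibility squares. However, there is a genuine error in the step claiming that ``all three exterior products factor through the respective $\Delta$-subcomplexes.'' This is false for the simplicial and the mixed rows. The faces of $\Delta^a\times\Delta^{a'}$ \emph{in the sense of \cite[\S 8]{GLBK}} are \emph{not} products of faces; they are (faces of) the simplices appearing in the shuffle triangulation of the prism, for instance the diagonal in $\Delta^1\times\Delta^1$. The external product $Z\times_k Z'$ of cycles meeting product faces properly has no reason to meet such non-product simplices properly. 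If your factoring claim were true, the inclusions $z^{p+p'}(T\times_k T',-\bullet,-\bullet)_\Delta\hookrightarrow z^{p+p'}(T\times_k T',-\bullet,-\bullet)$ and its mixed analogue would be equalities, and the lemma established just before this proposition (as well as \cite[Lemma 8.1]{GLBK}) would be content-free; moreover the resulting diagram would commute at the level of complexes, not merely in $D(\Ab)$, which is a stronger statement than the proposition asserts and is the signal that something is off.

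The correct route, which you already have most of, is to \emph{drop} the factoring claim entirely: invert the $\Delta$-inclusions (quasi-isomorphisms by the preceding lemma and by \cite[Lemma 8.1]{GLBK}) in $D(\Ab)$ to replace the double-simplex targets of the big diagram by their $\Delta$-subcomplexes—using the second compatibility diagram to see the vertical maps restrict—and only then compose with the triangulation maps via the two triangulation-compatibility squares. The roof through the $\Delta$-subcomplex is exactly how the exterior product on the single-$\Delta$ complexes is obtained, so this composition is the map in the statement, and the diagram commutes in $D(\Ab)$. Your closing remark about the sign $(-1)^{a'b}$ and the Eilenberg--Zilber shuffle is a reasonable point to flag.
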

A sheafified version of this is also true.

Toward the compatibility of the products, we discuss that of pull-backs.
First, we discuss the irrelevant case of a flat morphism \(f\colon T\to T'\) of equidimensional schemes of finite type over a field.
Let \(p\) be a nonnegative integer.
By a similar argument to the simplicial case, pull-backs induce
\[
    f^*\colon z_c^{p}(T', -\bullet)\to z_c^{p}(T, -\bullet),
\]
\[
    f^*\colon z_c^{p}(T', -\bullet)_0\to z_c^{p}(T, -\bullet)_0,
\]
\[
    f^*\colon z^{p}(T', \Delta^{-\bullet}, \square^{-\bullet})\to z^{p}(T, \Delta^{-\bullet}, \square^{-\bullet}).
\]
The first two of these are obviously compatible.
Also clearly the following holds.
\begin{prop}
    Let \(f\colon T\to T'\) be a flat morphism of equidimensional schemes of finite type over a field.
    Set \(p\) to be a nonnegative integer.
    The diagram
    \[
        \begin{CD}
            z^{p}(T, -\bullet)@>{f^*}>> z^{p}(T', -\bullet)\\
            @V{\cref{Delta2Mix}}VV   @V{\cref{Delta2Mix}}VV\\
            z^{p}(T, \Delta^{-\bullet}, \square^{-\bullet})@>{f^*}>> z^{p}(T', \Delta^{-\bullet}, \square^{-\bullet}) \\ 
            @A{\cref{Square2Mix}}AA    @A{\cref{Square2Mix}}AA\\
            z_c^{p}(T, -\bullet)_0@>{f^*}>> z_c^{p}(T', -\bullet)_0
        \end{CD}
    \]
    commutes.
\end{prop}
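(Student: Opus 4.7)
The plan is to reduce the commutativity to a tautology at the level of cycles. Both \cref{Delta2Mix} and \cref{Square2Mix} are inclusions of subcomplexes that do not modify the underlying cycle, only the ambient mixed complex in which it is viewed, and the flat pull-back $f^{\ast}$ is defined uniformly on all three complexes as scheme-theoretic inverse image of cycles along the flat morphism $f\times\id_{\Delta^{a}}\times\id_{\square^{b}}$ (read in the appropriate direction $T'\to T$; the arrows in the diagram as displayed should be taken to go from $T'$ to $T$).

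First I would unwind definitions. For $Z\in z^{p}(T',q)$, the cycle $Z$ is a codimension-$p$ subscheme of $T'\times\Delta^{q}$, and the image under \cref{Delta2Mix} is literally the same subscheme now regarded as an element of $z^{p}(T',\Delta^{q},\square^{0})$. Similarly \cref{Square2Mix} sends $Z\in z_c^{p}(T',q)_0$ to the same subscheme of $T'\times\square^{q}$ viewed inside $z^{p}(T',\Delta^{0},\square^{q})$. Thus both compositions around the diagram produce the same scheme-theoretic inverse image inside $T\times\Delta^{a}\times\square^{b}$, which is exactly the assertion to be proved.

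The only nontrivial point is that this inverse image cycle actually lies in each of the relevant complexes, i.e.\ that the transversality/vanishing conditions defining $z^{p}(T,-\bullet)$, $z^{p}(T,\Delta^{-\bullet},\square^{-\bullet})$ and $z_c^{p}(T,-\bullet)_0$ are preserved under flat pull-back. This is the standard dimension count: since $f$ is flat, $f\times\id$ remains flat, proper intersection with faces pulls back to proper intersection, and cycles that become $0$ when some $\square$-coordinate is projected to $\{\infty\}$ stay so after pull-back. These checks also show that $f^{\ast}$ is a morphism of complexes on each side with the conventions fixed earlier.

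The main obstacle, if any, is purely bookkeeping: making sure the three definitions of $f^{\ast}$ really are the same restriction of a single scheme-theoretic construction, and that the sign conventions in the total differentials of the mixed complex match those on the simplicial and cubical sides when the inclusions are applied. Since \cref{Delta2Mix} and \cref{Square2Mix} have already been built to respect the differentials and no new combinatorial data is introduced by $f^{\ast}$, the commutativity follows at the level of generating cycles and therefore as a diagram of complexes. As in the preceding proposition, a sheafified version is immediate by applying the same argument Zariski-locally.
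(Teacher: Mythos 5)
Your proof is correct and spells out exactly why the paper dismisses this as obvious: the inclusions \cref{Delta2Mix} and \cref{Square2Mix} act as the identity on underlying cycles, and flat pull-back along $f\times\id_{\Delta^a}\times\id_{\square^b}$ is the same scheme-theoretic operation on all three complexes, so commutativity is tautological once one checks that the transversality conditions defining each complex are preserved. You are also right to flag that the horizontal arrows as printed should run from $T'$ to $T$; this appears to be a typo carried over from the surrounding propositions.
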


More relevant is the case of a regular closed immersion.
We need more notations.
Let \(T\) be an equidimensional scheme of finite type over a field.
Take nonnegative integers \(p, q\).
For a finite set \(W\) of closed subschemes of \(T\), define \(\underline{z}_{c,W}^p(T, q)\) to be the abelian group of the cycles \(Z\) on \(\square^q_T\) of codimension \(p\) that intersect properly with all the faces of \(\square^q_{W_0}\), where \(W_0 = T\) or \(W_0\in W\).
This turns into a cubical object of the category of abelian groups, producing \(z_{c,W}^p(T, -\bullet)\) and \(z_{c,W}^p(T, -\bullet)_0\) as before.
Also, for nonnegative integers \(a,b\), set \(z_{W}^p(T, \Delta^a, \square^b)\) to be the set of \(Z\in z^p(T\times \Delta^a\times \square^b, 0)\) whose intersection with \(W_0\times F\times G\) for all \(W_0\in W\cup \{T\}\), all the faces \(F\subseteq \Delta^a\) and \(G\subseteq \square^b\) is proper, and is \(0\) when \(W_0 = T\), \(F = \Delta^a\) and the projection of \(G\) to some coordinate gives \(\{\infty\}\).
We have a total complex \(z_{W}^p(T, \Delta^{-\bullet}, \square^{-\bullet})\).

Let \(f\colon T\to T'\) be a regular closed immersion of pure codimension.
Here, we assume that \(T'\) is smooth and quasi-projective over a field.
Then \cite[Theorem 1.10]{KLAddHighChow} says that the bottom horizontal arrow of the commutative diagram
\[
\begin{CD}
    z^p(T', -\bullet) @<<< z^p_{\{T\}}(T', -\bullet) \\
    @V{\cref{Delta2Mix}}VV @V{\cref{Delta2Mix}}VV \\
    z^p(T', \Delta^{-\bullet}, \square^{-\bullet}) @<<< z^p_{\{T\}}(T', \Delta^{-\bullet}, \square^{-\bullet}) \\
    @A{\cref{Square2Mix}}AA @A{\cref{Square2Mix}}AA \\
    z_{c}^p(T', -\bullet)_0 @<<< z_{c,\{T\}}^p(T', -\bullet)_0
\end{CD}
\]
is quasi-isomorphic, where the horizontal arrows are inclusions.
Also, by \cite[Theorem 4.7]{LevRevisit} with the arguments in the proof of \cite[Theorem 4.4.2]{BloCub} and the Dold--Kan isomorphism, the right column of the diagram is made of quasi-isomorphisms.
Therefore, each arrow in the diagram is a quasi-isomorphism.

Finally, we have the commutative diagram
\[
\begin{CD}
    z^p_{\{T\}}(T', -\bullet) @>>> z^p(T, -\bullet)\\
    @V{\cref{Delta2Mix}}VV @V{\cref{Delta2Mix}}VV \\
    z^p_{\{T\}}(T', \Delta^{-\bullet}, \square^{-\bullet}) @>>> z^p(T, \Delta^{-\bullet}, \square^{-\bullet}) \\
    @A{\cref{Square2Mix}}AA @A{\cref{Square2Mix}}AA \\
    z_{c,\{T\}}^p(T', -\bullet)_0 @>>> z_{c}^p(T, -\bullet)_0,
\end{CD}
\]
where we explain the horizontal arrows now.
They carry an integral closed subscheme \(Z\) of \(\Delta^q_{T'}\), \(T'\times \Delta^a\times \square^b\), \(\square^q_{T'}\) respectively that belongs to the source to
\[
    \sum_{t} \sum_{i = 0}^{\infty} (-1)^i l_{\cO_{Z, t}}(\Tor_i^{\cO_{T',\pr (t)}}(\cO_{T,\pr (t)}, \cO_{Z,t}))\ol{\{t\}},
\]
where \(t\) runs through the maximal points of \(T\times_{T'} Z\) and \(\pr\colon \Delta^q_{T'}~(\mathrm{resp.}~T'\times \Delta^a\times \square^b, \square^q_{T'})\to T'\), with the following remarks.
First, the infinite sum is finite.
Second, these indeed are morphisms of complexes.

The following summary has a sheafified variant.
\begin{prop}
    Let \(f\colon T\to T'\) be a regular closed immersion of pure codimension.
    Here, we assume that \(T'\) is smooth and quasi-projective over a field.
    Set \(p\) to be a nonnegative integer.
    The diagram
    \[
        \begin{CD}
            z^{p}(T', -\bullet)@>{f^*}>> z^{p}(T, -\bullet)\\
            @V{\cref{Delta2Mix}}VV   @V{\cref{Delta2Mix}}VV\\
            z^{p}(T', \Delta^{-\bullet}, \square^{-\bullet})@>{f^*}>> z^{p}(T, \Delta^{-\bullet}, \square^{-\bullet}) \\ 
            @A{\cref{Square2Mix}}AA    @A{\cref{Square2Mix}}AA\\
            z_c^{p}(T', -\bullet)_0@>{f^*}>> z_c^{p}(T, -\bullet)_0
        \end{CD}
    \]
    commutes in the derived category of abelian groups.
\end{prop}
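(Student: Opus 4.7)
The plan is to reduce the problem to the strict commutativity of the analogous diagram involving the subcomplexes supported on $T$, and then observe that on these supported subcomplexes all three horizontal maps are given by the same Tor-theoretic formula.

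First I would note that the pullback $f^*$ in the derived category, for each of the three rows, is defined by inverting the quasi-isomorphism from the supported subcomplex into the ambient one and then composing with the explicit Tor-based formula recalled just before the proposition. These three quasi-isomorphisms fit into the commutative square displayed just before the proposition (whose commutativity is purely tautological, since all arrows are inclusions of cycle groups and \cref{Delta2Mix}, \cref{Square2Mix} preserve the property of meeting $T$ properly). Thus, it suffices to prove that the diagram
\[
\begin{CD}
    z^p_{\{T\}}(T', -\bullet) @>{f^*}>> z^p(T, -\bullet)\\
    @V{\cref{Delta2Mix}}VV @V{\cref{Delta2Mix}}VV \\
    z^p_{\{T\}}(T', \Delta^{-\bullet}, \square^{-\bullet}) @>{f^*}>> z^p(T, \Delta^{-\bullet}, \square^{-\bullet}) \\
    @A{\cref{Square2Mix}}AA @A{\cref{Square2Mix}}AA \\
    z_{c,\{T\}}^p(T', -\bullet)_0 @>{f^*}>> z_{c}^p(T, -\bullet)_0
\end{CD}
\]
commutes on the nose in the category of complexes.

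For the strict commutativity, I would unwind the Tor formula in each row. For an integral cycle $Z$ in any of the three source complexes, $f^*Z$ is the sum
\[
    \sum_t \sum_{i \geq 0} (-1)^i\, l_{\cO_{Z,t}}\!\bigl(\Tor_i^{\cO_{T',\pr(t)}}(\cO_{T,\pr(t)}, \cO_{Z,t})\bigr)\, \ol{\{t\}},
\]
where $t$ runs through the maximal points of $T \times_{T'} Z$ and $\pr$ is the projection onto the appropriate $T'$-factor. The key point is that \cref{Delta2Mix} and \cref{Square2Mix} are literal inclusions: a simplicial cycle $Z \in z^p_{\{T\}}(T', q)$ and its image in $z^p_{\{T\}}(T', \Delta^q, \square^0)$ are the same closed subscheme, and similarly for a cubical cycle $Z \in z_{c,\{T\}}^p(T', q)_0$ under its inclusion into $z^p_{\{T\}}(T', \Delta^0, \square^q)$. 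Consequently, the local rings $\cO_{Z,t}$, the projections $\pr$, and the resulting Tor computations all coincide with those used in the mixed complex. The strict commutativity of the two small squares therefore reduces to the tautology that the Tor formula applied to a fixed cycle does not depend on which complex we regard it as living in.

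The main technical obstacle is not the Tor computation itself but verifying that the Tor formula is indeed a morphism of complexes in the mixed and cubical settings (so that the three maps $f^*$ are comparable on the nose), and in particular that the cubical normalization — going from $z_{c,\{T\}}^p(T', -\bullet)_0$ through the quasi-isomorphism of the right column of the preceding commutative diagram — does not introduce extraneous signs or degenerate correction terms. This is handled by observing that \cref{Square2Mix} factors through the subcomplex of non-degenerate cubical cycles and that the differential conventions fixed in this section match those of \cite{BSModulus, TotMilnorK}; any sign issue in the boundary operator cancels uniformly between source and target, because the formula $f^*Z$ is linear in $Z$ and commutes with restriction to faces of either $\Delta^\bullet$ or $\square^\bullet$. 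A sheafified version is then obtained by applying the argument Zariski-locally on $T'$, using that the supported subpresheaves still sheafify to quasi-isomorphic subsheaves of the ambient motivic complexes.
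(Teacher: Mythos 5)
Your proposal is correct and follows essentially the same route as the paper: reduce to the two strictly commutative squares involving the supported subcomplexes $z^p_{\{T\}}(-,-\bullet)$, observe that \cref{Delta2Mix} and \cref{Square2Mix} are literal inclusions so that the same Tor formula is being applied to the same cycle in all three rows, and pass to the derived category via the quasi-isomorphisms between supported and unsupported complexes. One small remark: your concern about the cubical normalization introducing degenerate correction terms is not really an issue here, since the proposition already works with the normalized subcomplexes $z_c^p(-,-\bullet)_0$ and \cref{Square2Mix} maps these directly into the mixed complex — the only genuine checks are that the Tor formula is a chain map in each setting (which is asserted in the paragraph preceding the proposition) and that the inclusions of supported subcomplexes are quasi-isomorphisms (which is handled via \cite{KLAddHighChow} and \cite{LevRevisit}, \cite{BloCub}).
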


We proceed to the compatibility regarding the pushforward.
Let \(f\colon T\to T'\) be a proper morphism of equidimensional schemes of finite type over a field.
Let \(p\) be a nonnegative integer.
Similarly to the existence of \(f_*\colon z^{p + \dim T - \dim T'}(T, -\bullet)\to z^p(T', -\bullet)\), push-forwards induce
\[
    f_*\colon z_c^{p + \dim T - \dim T'}(T, -\bullet)\to z_c^p(T', -\bullet),
\]
\[
    f_*\colon z_c^{p + \dim T - \dim T'}(T, -\bullet)_0\to z_c^p(T', -\bullet)_0,
\]
\[
    f_*\colon z^{p + \dim T - \dim T'}(T, \Delta^{-\bullet}, \square^{-\bullet})\to z^p(T', \Delta^{-\bullet}, \square^{-\bullet})
\]
by relying on the intersection product of Cartier divisors and cycles.
The compatibility of the first two of these is obvious.
The following is clear as well.
\begin{prop}
    Let \(f\colon T\to T'\) be a proper morphism of equidimensional schemes of finite type over a field.
    Set \(p\) to be a nonnegative integer.
    The diagram
    \[
        \begin{CD}
            z^{p + \dim T - \dim T'}(T, -\bullet)@>{f_*}>> z^p(T', -\bullet)\\
            @V{\cref{Delta2Mix}}VV   @V{\cref{Delta2Mix}}VV\\
            z^{p + \dim T - \dim T'}(T, \Delta^{-\bullet}, \square^{-\bullet})@>{f_*}>> z^p(T', \Delta^{-\bullet}, \square^{-\bullet}) \\
            @A{\cref{Square2Mix}}AA    @A{\cref{Square2Mix}}AA\\
            z_c^{p + \dim T - \dim T'}(T, -\bullet)_0@>{f_*}>> z_c^{p}(T', -\bullet)_0
        \end{CD}
    \]
    commutes.
\end{prop}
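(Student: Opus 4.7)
The plan is to verify commutativity on individual cycles, since all three pushforwards in the diagram are defined by the same proper pushforward of cycles along $f \times \id$ in the relevant ambient spaces, while both inclusions \cref{Delta2Mix} and \cref{Square2Mix} leave the underlying cycle unchanged, merely reinterpreting it inside the larger bicomplex. Concretely, for an integral subscheme $Z$ of $T \times \Delta^q$ representing an element of $z^{p + \dim T - \dim T'}(T, q)$, the map $f_{*}$ in the top row sends $Z$ to $(f \times \id_{\Delta^q})_{*} Z$, while \cref{Delta2Mix} sends $Z$ to itself viewed in bidegree $(-q, 0)$ of $z^{p + \dim T - \dim T'}(T, \Delta^{-\bullet}, \square^{-\bullet})$. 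The pushforward on the mixed complex, defined via the same proper pushforward on the $T$-factor extended by the identity on $\Delta^a \times \square^b$, then also yields $(f \times \id)_{*} Z$. Hence the upper square commutes on the nose; the lower square is the same argument with $\square^q$ in place of $\Delta^q$ via \cref{Square2Mix}.

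Two small verifications remain. First, I would confirm that properness of intersection with the required faces is preserved by $f \times \id$; this is standard for proper pushforward on higher Chow complexes and is already built into the definitions of the target complexes. Second, for the cubical row I would check that $f_{*}$ maps the subcomplex of non-degenerate cycles $z_c^{\bullet}(\cdot, -\bullet)_0$ into itself, which is immediate because degeneracy is detected by factoring a cycle through a projection forgetting one $\square$-coordinate, and such a factorization is preserved by $(f \times \id)_{*}$. Sign conventions do not intervene here: $f_{*}$ acts only on the $T$-factor and never permutes $\Delta$- and $\square$-coordinates, so the twist $(-1)^{a' b}$ that appears in the exterior product is absent from the pushforward.

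I do not anticipate any substantive obstacle. The statement is a bookkeeping verification, which is why the authors write ``The following is clear as well''; the analogous propositions for flat pull-back and for the exterior product proved just above follow by the same template, and the pushforward case is marginally easier because no sign twist is needed. The sheafified variant is obtained by running the identical check Zariski-locally on $T'$ and then sheafifying, since the formation of $f_{*}$ and the inclusions \cref{Delta2Mix}, \cref{Square2Mix} are all compatible with restriction to Zariski opens of $T'$.
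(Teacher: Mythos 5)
Your proposal is correct and matches what the paper (tacitly) intends: the paper offers no argument beyond ``The following is clear as well,'' and the reason it is clear is exactly what you articulate, namely that both inclusions \cref{Delta2Mix} and \cref{Square2Mix} send a cycle to itself placed in bidegree $(-q,0)$ or $(0,-q)$ of the mixed bicomplex, while all three $f_*$'s are the same proper pushforward along $f\times\mathrm{id}$, so the squares commute termwise on the nose. Your two auxiliary checks (preservation of proper intersection with faces, which is already packaged into the existence of the three pushforwards asserted just before the proposition, and preservation of the normalized subcomplex, which follows because $f_*$ commutes with restriction to faces at $\infty$ by the projection formula) and your observation that no sign twist enters are all accurate and sufficient.
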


\section{Settings} \label{sec:Set}

We use the settings here throughout the rest of the note.
Let \(k\) be a field.
Set \(S\) to be a connected smooth separated quasi-compact \(k\)-scheme.
Let \(X\) be a smooth \(k\)-scheme endowed with a flat projective morphism \(\pi\colon X\to S\) of relative dimension \(d\) that is generically smooth on \(S\).
Bloch \cite{BloBiext} considers the case when \(X\) is smooth over \(S\).

We mean by \(\bfA^p(X/S)\) the small Zariski sheaf on \(S\) for nonnegative integers \(p\) obtained as the sheafification of the presheaf that carries an open subset \(U\subseteq S\) to
\[
    \{[Z]\in \CH^p(\pi^{-1}(U))\mid Z\in z^p(\pi^{-1}(U), 0) \text{ is generically homologically trivial}\},
\]
where generically homologically trivial cycles stand the following.
Take a nonempty open subset \(S_{\sm}\subseteq S\) such that \(\pi\) restricted to \(\pi^{-1}(S_{\sm})\) is smooth.
Let \(U\subseteq S\) be an open subset.
Set \(Z\in z^p(X\times_S U,0)\).
When \(U\subseteq S_{\sm}\), this \(Z\), or \([Z]\in \CH^p(X\times_S U)\), is said homologically trivial if \([Z]_{\et} = 0\in H^{2p}_\et(X_s,\bZ_\ell)\) for every geometric point \(s\to U\) and every prime \(\ell\) different from the characteristic of \(k\).
In general, we say \(Z\) and \([Z]\) are generically homologically trivial if they are homologically trivial after restricted to \(\pi^{-1}(U\cap S_{\sm})\).
The last definition does not depend on \(S_{\sm}\).

\begin{rem} \label{rem:BettiEt}
    When \(k=\bC\), the definition of homologically trivial cycles using the etale cycle class map is equivalent to that using the Betti cycle class map for \(\Spec \bC = s\to U\) as above.
\end{rem}

\part{Cycles and biextensions revisited} \label{chap:BloBiext}

We start the actual discussion regarding various biextensions.
This part of the paper recovers the results in \cite{BloBiext} in a more systematic way described in the introduction.

As we mentioned there, we need \cref{lem:theta} in \cref{sec:Van} to reduce \cref{RoughBiext} to some derived morphism that actually correspond to the biextension by Bloch.
We record the actual construction of the derived morphism in \cref{sec:Constr}.
\cref{lem:HomAlg}, a statement purely in homological algebra, largely controls the process of the reduction using \cref{lem:theta}.
We illustrate the relationship of our discussion with the biextension by Bloch in \cref{rem:Bloch}.

If we just needed the isomorphism class of the biextension of the pair \((\pi_* z^m(*_X, 0), \pi_* z^n(*_X, 0))\), then we could restrict \cref{RoughBiext} to \(\pi_* z^m(*_X, 0)\otimes \pi_* z^m(*_X, 0)\).
We will take up this idea in \cref{sec:Calc}, where we recover sections the biextensions of Bloch and Seibold.
That is, we first construct sections where we do not have to consider descent.
Then, the Weil reciprocity law forces these sections to descend.
Like \(B_{W, Z}\) in the introduction, the sections \(\langle W, Z\rangle\) here are given for each pair of disjoint families of generically homologically trivial cycles.
This means that the sections defines a rational section of the line bundle obtained as the pull-back of the biextension, where we say a rational section because we also have in mind the situation where these families meet somewhere.
The divisor of the rational section was computed by Seibold as in \cref{prop:Calc}.

\section{Vanishing around the multiplication of higher Chow groups} \label{sec:Van}
Let \(n\) be an integer such that \(0\leqq n\leqq d+1\).
Put \(m\coloneqq d-n+1\).
Take \(W\in z^m(X,0)\).
In this section, we prove a vanishing result related with multiplying \([W]\).

Let
\[
    \theta_W\colon \CH^n(X,1)\to \CH^{d+1}(X,1)\to \CH^1(S,1)
\]
be the composition of the map that takes the product with \([W]\) and of \(\pi_*\).
The title refers to the following result.
\begin{lem}[{\cite[4.5 Korollar]{SeiBier}, \cf \cite[Lemma 1]{BloBiext}}] \label{lem:theta}
    Suppose that \(W\) is generically homologically trivial.
    Then \(\theta_W = 0\).
\end{lem}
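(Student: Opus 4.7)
The plan is to reduce to the case $S = \Spec k$ and then invoke a cohomological vanishing coming from $\Ext^{\geqq 2}_{\mathrm{MHS}} = 0$.

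For the reduction, $\CH^1(S,1) \cong \cO_S(S)^\times$ by \cref{ex:CH1(1)}, and this injects into $K(S)^\times = \CH^1(\Spec K(S), 1)$ because $S$ is connected and smooth, hence integral. The map $\theta_W$ commutes with the flat pullback along $\Spec K(S) \to S$, by compatibility of the higher Chow product and of the proper pushforward with flat base change. Since the generic point of $S$ lies in any dense open $S_{\sm}$, the restriction $W_\eta$ to the generic fibre $X_\eta$ is honestly homologically trivial. Hence it suffices to establish the lemma when $S = \Spec k$ is the spectrum of a field, $X/k$ is smooth projective of dimension $d$, and $W \in \CH^m(X)$ is homologically trivial; the goal becomes $\pi_*([W] \cdot \alpha) = 1 \in k^\times = \CH^1(\Spec k, 1)$ for every $\alpha \in \CH^n(X, 1)$.

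After embedding $k$ into $\bC$ (as in the primary geometric setting, or via spreading out), apply the regulator to Deligne cohomology. Its compatibility with product and pushforward gives
\[
    \theta_W(\alpha)^{\cD} = \pi_*\bigl(\alpha^{\cD} \cup [W]^{\cD}\bigr) \in H^1_\cD(\Spec \bC, \bZ(1)) = \bC^\times.
\]
The homological triviality of $W$ forces $[W]^{\cD}$ into the intermediate-Jacobian piece $J^m(X) = \Ext^1_{\mathrm{MHS}}(\bZ, H^{2m-1}(X, \bZ(m)))$. Similarly $\alpha^{\cD} \in H^{2n-1}_\cD(X, \bZ(n)) = \Ext^1_{\mathrm{MHS}}(\bZ, H^{2n-2}(X, \bZ(n)))$, since the $\Hom$-piece vanishes by weight reasons. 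Both classes therefore live in the $F^1$-piece of the filtration on $H^\ast_\cD$ arising from the spectral sequence $E_2^{s,t} = \Ext^s_{\mathrm{MHS}}(\bZ, H^t(X, \bZ(\ast))) \Rightarrow H^{s+t}_\cD(X, \bZ(\ast))$. By multiplicativity, the cup product lies in $F^2$; but $F^2 = 0$ since $\Ext^{\geqq 2}_{\mathrm{MHS}} = 0$ by Beilinson. Hence $\theta_W(\alpha)^{\cD} = 0$, and the regulator isomorphism $\CH^1(\Spec \bC, 1) = \bC^\times \cong H^1_\cD(\Spec \bC, \bZ(1))$ gives $\theta_W(\alpha) = 1$.

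The main obstacle is handling base fields $k$ that do not embed into $\bC$: in positive characteristic one would pass to $\ell$-adic etale cohomology, but injectivity of $k^\times \hookrightarrow H^1_\et(\Spec k, \bZ_\ell(1))$ can fail for $k$ with a highly divisible unit group (for instance $\ol{\bF}_p$). The cycle-theoretic moving-lemma proof of \cite[4.5 Korollar]{SeiBier}, in the spirit of \cite[Lemma~1]{BloBiext}, circumvents this by working uniformly at the level of cycles, and is what the paper ultimately invokes.
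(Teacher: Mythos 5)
Your reduction to the generic fibre is fine, and over \(\bC\) your general strategy is in the same spirit as the paper's (compare \cref{lem:thetaDel}, which is what the paper invokes for the case \(S=\Spec\bC\)): show the cup product dies because both factors sit in the kernel of \(H^\bullet_\cD\to H^\bullet\) and the product of two such classes vanishes. But your claim that ``the \(\Hom\)-piece vanishes by weight reasons'' for \(\alpha^\cD\) is incorrect as stated: the piece \(\Hom_{\mathrm{MHS}}(\bZ, H^{2n-1}(X,\bZ(n)))\) is zero only modulo torsion, and \(H^{2n-1}(X,\bZ(n))\) can have torsion, so \(\alpha^\cD\) need not lie in \(F^1\). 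Since \(F^1\cdot F^0\subseteq F^1\) gives you nothing and the target \(\bC^\times\) has plenty of torsion, you cannot conclude the product is zero from multiplicativity of the filtration alone.

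The paper's proof of \cref{lem:thetaDel} handles exactly this: the quotient of \(H_\cD^{2n-1}(X,\bZ(n))\) by the ``Ext\({}^1\)-piece'' \(J(X)\cap H_\cD^{2n-1}\) is shown to be torsion, while \([W]_\cD\) lives in \(J(X)\cap H_\cD^{2m}(X,\bZ(m))\), which is divisible; a bilinear pairing of a torsion group against a divisible group is zero, and the remaining \(J\cdot J\) contribution dies by \cite[Lemma 1.6]{BloHei} (the \(\Ext^2=0\) phenomenon). Your argument needs this torsion/divisibility step to be complete over \(\bZ\). For the general base field (the paper allows any field \(k\), and \(K(S)\) need not embed into \(\bC\), even in characteristic zero if \(k\) is large), the paper — like you — defers to \cite[4.5 Korollar]{SeiBier}, so the positive-characteristic caveat you raise is not an extra weakness of your argument relative to the paper's, but a gap you are correctly flagging as unresolved by the Deligne-cohomology route.
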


\begin{proof}
Since the proof is in \cite{SeiBier}, we only make some remarks.
First, the author does not know why a result of Gabber was cited in the proof of \cite[Lemma 1]{BloBiext}.
Second, we talk about the case where \(S = \Spec k = \Spec \bC\) and that \(X\) is smooth over \(S\).
This case is settled in \cite[Lemma 1]{MulBiext} as explained in \cref{lem:thetaDel}.
However, the proof of \cite[Lemma 1]{MulBiext} cites a reference that is not applicable.
Also, the author does not know whether \(H_{\cD}^{2n-1}(X, \bZ(n))\subseteq J(X)\) as stated in that proof.
See \cref{rem:BettiEt} as well.
\end{proof}

\begin{lem}\label{lem:thetaDel}
    Suppose that \(S=\Spec \bC\).
    Then \(H_\cD^{2n-1}(X, \bZ(n))\xrightarrow{\cdot [W]_\cD} H_\cD^{2d+1}(X, \bZ(d+1))\) is \(0\).
\end{lem}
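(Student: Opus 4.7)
The plan is to reinterpret both source and target of $\cup [W]_\cD$ via Beilinson's absolute Hodge cohomology perspective, where classes correspond to extensions in the category of mixed Hodge structures (MHS), and to invoke Beilinson's theorem that $\mathrm{MHS}$ has cohomological dimension one, i.e., $\Ext^i_{\mathrm{MHS}}(\bZ, -) = 0$ for $i \geq 2$. The key insight is that, under our hypotheses, $\alpha\in H_\cD^{2n-1}(X, \bZ(n))$ and $[W]_\cD\in H_\cD^{2m}(X, \bZ(m))$ are both represented by $\Ext^1$-classes, so that their cup product lies in an $\Ext^2$-range that must vanish.

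First, I would use the short exact sequence
\[
    0 \to \Ext^1_{\mathrm{MHS}}(\bZ, H^{p-1}(X, \bZ(a))) \to H_\cD^p(X, \bZ(a)) \to \Hom_{\mathrm{MHS}}(\bZ, H^p(X, \bZ(a))) \to 0,
\]
arising from the hypercohomology spectral sequence $\Ext^s_{\mathrm{MHS}}(\bZ, H^t(X, \bZ(a))) \Rightarrow \Ext^{s+t}_{\mathrm{MHS}}(\bZ, R\Gamma(X, \bZ(a))) = H_\cD^{s+t}(X, \bZ(a))$ together with the vanishing of $\Ext^{\geq 2}_{\mathrm{MHS}}$. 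For $(p,a) = (2n-1, n)$ the Hom term is $F^n \cap H^{2n-1}(X, \bZ(n))$, which vanishes (rationally) since a pure weight-$(2n-1)$ Hodge structure satisfies $F^n \cap \ol{F^n} = 0$; hence $H_\cD^{2n-1}(X, \bZ(n))$ is the intermediate Jacobian $J^{n-1}(X) = \Ext^1_{\mathrm{MHS}}(\bZ, H^{2n-2}(X, \bZ(n)))$. For $(p,a) = (2m, m)$ the $\Ext^1$-subgroup is $J^{m-1}(X) = \Ext^1_{\mathrm{MHS}}(\bZ, H^{2m-1}(X, \bZ(m)))$. For $(p,a) = (2d+1, d+1)$ the Hom term vanishes since $H^{2d+1}(X, \bZ) = 0$, giving $H_\cD^{2d+1}(X, \bZ(d+1)) \cong \Ext^1_{\mathrm{MHS}}(\bZ, \bZ(1)) \cong \bC^\times$.

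Next, since $S = \Spec \bC$ and $W$ is generically homologically trivial, the Betti class $[W]_\sing$ is zero, so $[W]_\cD$ lies in the $\Ext^1$-subgroup $J^{m-1}(X)\subseteq H_\cD^{2m}(X, \bZ(m))$. Under Beilinson's identification with absolute Hodge cohomology, the Deligne cup product with $[W]_\cD$ corresponds to the Yoneda-type cup product in $D^b(\mathrm{MHS})$ of two $\Ext^1$-classes, followed by the product map in the cohomology of $X$. The result lies in $\Ext^2_{\mathrm{MHS}}(\bZ, H^{2n-2}(X, \bZ(n)) \otimes^\bL H^{2m-1}(X, \bZ(m)))$, mapped to $\Ext^2_{\mathrm{MHS}}(\bZ, H^{2d-1}(X, \bZ(d+1)))$; this vanishes by Beilinson's cohomological-dimension-one theorem. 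Hence $\alpha \cup [W]_\cD = 0$ for every $\alpha$.

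The main obstacle is verifying that the Deligne cup product agrees, under Beilinson's identification, with the Yoneda-type product in $D^b(\mathrm{MHS})$; for this I would invoke Beilinson's original construction of absolute Hodge cohomology as a multiplicative theory. Minor integral/torsion subtleties in the vanishing of $\Ext^2_{\mathrm{MHS}}$ can be sidestepped by tensoring with $\bQ$, since the target $\bC^\times$ is divisible and no information about the map is lost.
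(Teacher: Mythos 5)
Your approach is genuinely different from the paper's in its central step, but shares the same structural skeleton, and it has one real gap at the end.

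The paper's proof has two moving parts. First, it cites \cite[Lemma 1.6]{BloHei} to see that the Deligne product of two classes in \(J(X)\) (the kernel of the map to Betti cohomology) vanishes; this lets the pairing factor through the quotient \(H_\cD^{2n-1}/(J(X)\cap H_\cD^{2n-1})\) on the first variable. Second, it observes that this quotient (which is \(\Hom_{\mathrm{MHS}}(\bZ, H^{2n-1}(X,\bZ(n)))\), torsion by purity of weight \(-1\)) pairs trivially against \([W]_\cD\) because \(J(X)\cap H_\cD^{2m}(X,\bZ(m))\) is \emph{divisible} \cite[p.~129]{BloHei}: if \(N\alpha \in J(X)\) and \([W]_\cD = N\beta\) with \(\beta\in J(X)\), bilinearity gives \(\alpha\cdot[W]_\cD = (N\alpha)\cdot\beta = 0\). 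Your Ext-theoretic argument is, in effect, a transparent reproof of the first part via Beilinson's cohomological-dimension-one theorem: both classes factor through \(\tau_{<2n-1}R\Gamma(X,\bZ(n))\) and \(\tau_{<2m}R\Gamma(X,\bZ(m))\), so the product lands in an \(\Ext^{\geqq 2}\)-range that vanishes (at least rationally). That is a clean conceptual alternative to citing Bloch, and it makes the mechanism visible. You also identify the same torsion obstruction the paper does (\(\Hom_{\mathrm{MHS}}(\bZ,H^{2n-1}(X,\bZ(n)))\) is only \emph{rationally} zero).

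The gap is in your last sentence. Tensoring with \(\bQ\) does \emph{not} lose ``no information,'' precisely because \(\bC^\times\) is divisible with large torsion: \(\bC^\times \otimes_\bZ \bQ \cong \bC^\times/\mu_\infty\), so a rational vanishing of the pairing only shows that every \(\alpha\cdot[W]_\cD\) is a root of unity, not that it is \(1\). Something more is needed to kill this torsion ambiguity. The paper supplies exactly that extra input: the divisibility of \(J(X)\cap H_\cD^{2m}(X,\bZ(m))\), combined with bilinearity, forces the pairing of the torsion quotient against the divisible \([W]_\cD\) to vanish on the nose. You should replace the ``tensor with \(\bQ\)'' step with this divisibility argument (or an equivalent one) to close the proof; as written, it establishes only that \(\alpha\cdot[W]_\cD \in \mu_\infty\) for all \(\alpha\).

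One smaller point worth tightening: the assertion that the cup product of two \(\Ext^1\)-classes ``lies in \(\Ext^2\)'' is not automatic from the way you phrase it; it requires either appealing to the formality of \(R\Gamma(X,\bQ)\) in \(D^b(\mathrm{MHS}\otimes\bQ)\) for smooth projective \(X\), or arguing directly that each class factors through the subcomplex \(\tau_{<\bullet}R\Gamma\), so that the product factors through \(\Hom_{D^b(\mathrm{MHS})}(\bZ, C[2d+1])\) for a complex \(C\) concentrated in degrees \(\leqq 2d-1\) (rationally), which vanishes by \(\Ext^{\geqq 2}_{\mathrm{MHS}}=0\). Either route works; the second avoids invoking formality.
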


\begin{proof}
    Let \(J(X)\) be the kernel of
    \[
        \bigoplus_{p,q\in\bZ} H^p_\cD(X, \bZ(q))\to \bigoplus_{p,q\in\bZ} H^p(X, \bZ(q)).
    \]
    By \cite[Lemma 1.6]{BloHei}, the multiplication of Deligne cohomology induces
    \[
        H_\cD^{2n-1}(X, \bZ(n))/(J(X)\cap H_\cD^{2n-1}(X, \bZ(n)))\times (J(X)\cap H_\cD^{2m}(X, \bZ(m)))\to H_\cD^{2d+1}(X, \bZ(d+1)).
    \]
    Its source is isomorphic to the image of
    \[
        H_{\cD}^{2n-1}(X, \bZ(n))\to H^{2n-1}(X, \bZ(n))\oplus F^nH^{2n-1}(X, \bC),
    \]
    namely the kernel of
    \[
        H^{2n-1}(X, \bZ(n))\oplus F^nH^{2n-1}(X, \bC)\to H^{2n-1}(X, \bC).
    \]
    The last group is torsion.
    On the other hand, \(J(X)\cap H_\cD^{2m}(X, \bZ(m))\), where \([W]_\cD\) belongs to, is divisible by \cite[p.~129]{BloHei}.
    Thus the lemma follows.
\end{proof}

\section{Construction of a \texorpdfstring{\(\Gm\)}{Gm}-biextension} \label{sec:Constr}
The title means the biextension of \(\bfA^m(X/S)\) and \(\bfA^n(X/S)\).
As before, we stick to cohomological notations.

Set \(\pr_{S,1}, \pr_{S,2}\colon S^2\to S\) and \(\pr_{X,1}, \pr_{X,2}\colon X^2\to X\) to be the first and second projections.
Let \(\Delta_X\colon X\to X^2\) and \(\Delta_S\colon S\to S^2\) be the diagonal.
Put \(\pi^2\colon X^2\to S^2\).
Define
\[
    t\colon \Delta_S^{-1}\pi^2_*z^{d+1}(*_{X^2}, -\bullet, -\bullet)\to z^1(*_S,-\bullet) = \Gm[1]
\]
(\cf \cref{ex:CH1(1)}) to be the morphism in the derived category adjoint to
\begin{align*}
    &\pi^2_*z^{d+1}(*_{X^2}, -\bullet, -\bullet)\xleftarrow\simeq \pi^2_*z^{d+1}(*_{X^2}, -\bullet, -\bullet)_\Delta\to \pi^2_*z^{d+1}(*_{X^2},-\bullet) \xleftarrow\simeq \pi^2_*z^{d+1}_{\{\Delta X\}}(*_{X^2},-\bullet) \\
    \to& \pi^2_*\Delta_{X,*} z^{d+1}(*_X, -\bullet) = \Delta_{S,*}\pi_* z^{d+1}(*_X, -\bullet)\to \Delta_{S,*} z^1(*_S,-\bullet)\xrightarrow\simeq R\Delta_{S,*} z^1(*_S,-\bullet),
\end{align*}
with more details below.
\begin{itemize}
    \item The first arrow comes from \(z^{d+1}(*_{X^2}, -\bullet, -\bullet)_\Delta\to z^{d+1}(*_{X^2}, -\bullet, -\bullet)\).
    This is an isomorphism on any open subset of \(X^2\) by \cite[Lemma 8.1]{GLBK}.
    \item We obtain the second morphism from the triangulation \cite[p.~100]{GLBK}.
    \item The third arrow is an isomorphism due to \cite[Theorem 1.10]{KLAddHighChow} applied to quasi-projective open subsets of \(S^2\).
    \item The fourth morphism results from pushing the restriction by \(\Delta_X\) forward by \(\pi^2\).
    \item The second last arrow means \(\pi_*\) modified by \(\Delta_{S,*}\).
    \item The last isomorphism is by Zariski descent similar to \cite[right after Theorem 1.7]{LevTec}.
        This morphism is not essential in the construction.
        See the biggest diagram in \cref{sec:Calc}.
\end{itemize}

Also, consider the morphism
\begin{align*}
    &(\pi^2)^{-1}(\pr_{S,1}^{-1}\pi_*z^m(*_X, -\bullet)\otimes_\bZ \pr_{S,2}^{-1}\pi_*z^n(*_X, -\bullet)) \\
    =& \pr_{X,1}^{-1}\pi^{-1}\pi_*z^m(*_X, -\bullet)\otimes_\bZ \pr_{X,2}^{-1}\pi^{-1}\pi_*z^n(*_X, -\bullet) \\
    \to& \pr_{X,1}^{-1}z^m(*_X, -\bullet)\otimes_\bZ \pr_{X,2}^{-1}z^n(*_X, -\bullet)\to z^{d+1}(*_{X^2}, -\bullet, -\bullet)
\end{align*}
of complexes.
Its adjoint gives
\[
    \pr_{S,1}^{-1}\pi_*z^m(*_X, -\bullet)\otimes_\bZ \pr_{S,2}^{-1}\pi_*z^n(*_X, -\bullet) \to \pi^2_*z^{d+1}(*_{X^2}, -\bullet, -\bullet).
\]
It then pulls back via \(\Delta_S\) to
\begin{align} \label{prelimTimes}
    \pi_*z^m(*_X, -\bullet)\otimes_\bZ \pi_*z^n(*_X, -\bullet)\to \Delta_S^{-1}\pi^2_* z^{d+1}(*_{X^2}, -\bullet, -\bullet).
\end{align}
Now, define a subcomplex \(z_{\hom}^p(*_{X/S}, -\bullet)\) of \(\pi_*z^p(*_X, -\bullet)\) for nonnegative integers \(p\) by
\[
    z_{\hom}^p(*_{X/S}, q)\coloneqq \pi_*z^p(*_X, q)
\]
for \(0< q\in \bZ\) and by setting \(z_{\hom}^p(*_{X/S}, 0)\) to send an open subset \(U\subseteq S\) to the set of the generically homologically trivial cycles in \(z^p(\pi^{-1}(U), 0)\).
Restrict \cref{prelimTimes} to
\[
    *\times *\colon z_{\hom}^m(*_{X/S}, -\bullet)\otimes_\bZ z_{\hom}^n(*_{X/S}, -\bullet)\to \Delta_S^{-1}\pi^2_*z^{d+1}(*_{X^2}, -\bullet, -\bullet).
\]

\begin{lem} \label{lem:HomAlg}
    Let \(P, Q\) be bounded above complexes of the sheaves of abelian groups on a topological space.
    Let \(a, b\) be integers such that \(P^x = Q^y = 0\) if \(x>a, y>b\).
    Assume that \(P^a, Q^b\) are flat over \(\bZ\).
    The quotient of \(\tau_{\geqq a + b - 1} (P\otimes Q)\) by the image of
    \[
        (\Ker \partial_P^{a - 1}\otimes_\bZ Q^b\oplus P^a\otimes_\bZ \Ker \partial_Q^{b - 1})[1 - a - b]
    \]
    is isomorphic to \(H^a(P)[-a]\otimes_\bZ^\bL H^b(Q)[-b]\) in the derived category.
\end{lem}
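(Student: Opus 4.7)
The plan is to realize the quotient in question as the canonical truncation of an explicit two-term flat model of the derived tensor product. Set
\[
A \coloneqq \bigl[\,\Ima(\partial_P^{a-1}) \hookrightarrow P^a\,\bigr], \qquad B \coloneqq \bigl[\,\Ima(\partial_Q^{b-1}) \hookrightarrow Q^b\,\bigr],
\]
placed in degrees $(a-1, a)$ and $(b-1, b)$ respectively. The terms are flat over \(\bZ\): by hypothesis \(P^a\) and \(Q^b\) are flat (torsion-free on stalks), and their subsheaves \(\Ima(\partial_P^{a-1})\subset P^a\) and \(\Ima(\partial_Q^{b-1})\subset Q^b\) are also torsion-free, hence flat. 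The obvious projections induce quasi-isomorphisms \(A \to H^a(P)[-a]\) and \(B \to H^b(Q)[-b]\), so \(A \otimes B\) models \(H^a(P)[-a] \otimes^\bL H^b(Q)[-b]\) in the derived category.

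Let \(C\) denote the quotient complex appearing in the statement. The map
\[
(P \otimes Q)^{a+b-1} = P^{a-1}\otimes Q^b \oplus P^a \otimes Q^{b-1} \twoheadrightarrow (A \otimes B)^{a+b-1},\quad (x\otimes y,\,u\otimes v) \mapsto (\partial_P x\otimes y,\,u\otimes \partial_Q v),
\]
is surjective with kernel exactly \(\Ker\partial_P^{a-1}\otimes Q^b \oplus P^a \otimes \Ker\partial_Q^{b-1}\), using flatness of \(Q^b\) and \(P^a\). Under this surjection the image of \(\partial^{a+b-2}_{P \otimes Q}\) maps onto \(\Ima(\partial^{a+b-2}_{A \otimes B})\): the contributions from \(P^{a-2}\otimes Q^b\) and \(P^a \otimes Q^{b-2}\) die because \(\partial^2 = 0\), while the contribution from \(P^{a-1}\otimes Q^{b-1}\) yields \(((-1)^{a-1}\partial_P x \otimes \partial_Q y,\,\partial_P x\otimes \partial_Q y) = \partial^{a+b-2}_{A\otimes B}(\partial_P x \otimes \partial_Q y)\). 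Matching the differentials from degree \(a+b-1\) to \(a+b\) directly, one identifies \(C \cong \tau_{\geqq a+b-1}(A\otimes B)\) as complexes.

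Finally, because \(\bZ\) has global dimension one, \(\Tor_2^{\bZ}\) vanishes, so
\[
H^{a+b-2}(A \otimes B) = \Tor_2^{\bZ}(H^a(P), H^b(Q)) = 0.
\]
Consequently the canonical projection \(A \otimes B \to \tau_{\geqq a+b-1}(A \otimes B)\) is a quasi-isomorphism, and we conclude that
\[
C \;\simeq\; A \otimes B \;\simeq\; H^a(P)[-a] \otimes^\bL H^b(Q)[-b]
\]
in the derived category. The only delicate step is the identification in the second paragraph, which is bookkeeping about flat tensor kernels; the sheaf-theoretic setting poses no further difficulty, as all arguments are stalkwise.
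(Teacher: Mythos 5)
Your proof is correct and essentially identical to the paper's: the three-term complex the paper displays is precisely your $A \otimes B$, the flatness remark about $\Ima\partial_P^{a-1}$ and $\Ima\partial_Q^{b-1}$ is the same observation, and the $\Tor_2^{\bZ}=0$ step you spell out is the one the paper leaves implicit in applying the truncation. You have merely made explicit the bookkeeping (the degree-$(a+b-1)$ surjection and its kernel, the matching of differentials, and the vanishing of $H^{a+b-2}$) that the paper's terse proof takes for granted.
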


\begin{proof}
    The quotient equals \(\tau_{\geqq a + b - 1}\) applied to
    \[
        \dots \to 0\to \Ima \partial_P^{a - 1}\otimes_\bZ \Ima \partial_Q^{b - 1}\to  \Ima \partial_P^{a - 1}\otimes_\bZ Q^b\oplus P^a\otimes_\bZ \Ima \partial_Q^{b - 1}\to P^a\otimes_\bZ Q^b\to 0 \to \dots,
    \]
    where \(P^a\otimes Q^b\) is in degree \(a + b\).
    Note that \(\Ima \partial_P^{a - 1}\) and \(\Ima \partial_Q^{b - 1}\) are flat.
\end{proof}

By \cref{lem:theta}, \(t\circ (*\times *)\) restricted to
\begin{align} \label{subcpx}
    (&\pi_*\Ker (\partial \colon z^m(*_X, 1)\to z^m(*_X, 0))\otimes_\bZ z^n_{\hom}(*_{X/S}, 0)\oplus \\
    &z^m_{\hom}(*_{X/S}, 0)\otimes_\bZ \pi_*\Ker (\partial \colon z^n(*_X, 1)\to z^n(*_X, 0)))[1] \nonumber
\end{align}
vanishes.
Let \(A\) be the quotient by this subcomplex of \(z_{\hom}^m(*_{X/S}, -\bullet)\otimes_\bZ z_{\hom}^n(*_{X/S}, -\bullet)\).
Thus \(t\circ (*\times *)\) uniquely induces \(t\circ (*\times *)\colon A\to \Gm[1]\).
\cref{lem:HomAlg} calculates the source of
\begin{align} \label{biext}    
    \tau_{\geqq -1}(t\circ (*\times *))\colon \bfA^m(X/S)\otimes^\bL_\bZ \bfA^n(X/S) \to \Gm[1].
\end{align}
The morphism \cref{biext} gives a \(\Gm\)-biextension \(\bE\) over \(\bfA^m(X/S)\times \bfA^n(X/S)\) by \cite[Expos\'e VII, Corollaire 3.6.5]{SGA7}.

\begin{rem} \label{rem:Bloch}
Originally, \(\bfE_W\) \cite[before (2.1)]{BloBiext} was obtained by restricting \(t\circ (*\times *)\) to
\[
    z_{\hom}^m(*_{X/S}, 0)\otimes_\bZ \pi_*z^n(*_X, -\bullet)
\]
and by taking the \(-1\)-st cohomology of its mapping cone.
Our construction induces \cite[(2.1)]{BloBiext} roughly as follows.
We can repeat our argument, replacing \(z_{\hom}^m(*_{X/S}, -\bullet)\otimes_\bZ z_{\hom}^n(*_{X/S}, -\bullet)\) with \(z_{\hom}^m(*_{X/S}, 0)\otimes_\bZ z_{\hom}^n(*_{X/S}, -\bullet)\) and \cref{subcpx} with
\[
    (z^m_{\hom}(*_{X/S}, 0)\otimes_\bZ \pi_*\Ker (\partial \colon z^n(*_X, 1)\to z^m(*_X, 0)))[1].
\]
The repeated argument leads us to \(z_{\hom}^m(*_{X/S}, 0)\otimes_\bZ \bfA^n(X/S)\to \Gm[1]\) that is compatible with \cref{biext}.
We get the corresponding extension of \(z_{\hom}^m(*_{X/S}, 0)\otimes_\bZ \bfA^n(X/S)\) by \(\Gm\) by looking at its mapping cone and the relevant long exact sequence of cohomology.
The pull-back of the \(\Gm\)-extension to \(z_{\hom}^m(*_{X/S}, 0)\times \bfA^n(X/S)\) is the biextension corresponding to the above derived morphism to \(\Gm[1]\), where the comparison of biextensions and extensions in \cite[\S 3]{SGA7} in this case is easy because \(z_{\hom}^m(*_{X/S}, 0)\otimes_\bZ \bfA^n(X/S) = z_{\hom}^m(*_{X/S}, 0)\otimes^\bL_\bZ \bfA^n(X/S)\).
The last \(\Gm\)-extension actually comes from the original construction, namely the long exact sequence associated with \(t\circ (*\times *)\) restricted to \(z_{\hom}^m(*_{X/S}, 0)\otimes_\bZ z_{\hom}^n(*_{X/S}, -\bullet)\) and its mapping cone.
\end{rem}

\section{Calculation of the line bundle} \label{sec:Calc}
Take \(W\in z_{\hom}^m(S_{X/S},0)\) and \(Z\in z_{\hom}^n(S_{X/S},0)\).
These induce a global section of \(\bfA^m(X/S)\times \bfA^n(X/S)\).
The section in turn defines a Zariski \(\Gm\)-torsor \(\bE_{W,Z}\) on \(S\).
This line bundle has been determined as follows.

\begin{prop}[{\cite[6.8 Satz]{SeiBier}}] \label{prop:Calc}
    Assume that \(S\) is quasi-projective over \(k\).
    Then \(\bE_{W,Z}\) corresponds to \(\pi_*([W][Z])\) via \(\Pic(S)\simeq \CH^1(S)\).
\end{prop}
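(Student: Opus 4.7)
The plan is to pull back the derived morphism \cref{biext} along the global section defined by $(W, Z)$ and recognize the result as the desired element of $\Pic(S) = H^1(S, \Gm)$. Concretely, by the Grothendieck correspondence recalled in \cref{sec:Constr}, the class of $\bE_{W,Z}$ is represented by the composition
\[
    \bZ = \bZ \otimes^{\bL} \bZ \xrightarrow{W \otimes Z} \bfA^m(X/S) \otimes^{\bL} \bfA^n(X/S) \xrightarrow{\cref{biext}} \Gm[1].
\]
Since the subcomplex \cref{subcpx} quotiented out to build $A$ sits in degree $-1$ and $\tau_{\geqq -1}$ is the identity in degree $0$, evaluating this composition reduces to applying $t \circ (*\times *)$ to the degree-$0$ element $W \otimes Z$.

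Next, I would unfold the definition of $t \circ (*\times *)$. The exterior product sends $W \otimes Z$ to the cycle $W \times Z \in \pi^2_* z^{d+1}(*_{X^2}, 0)$. Using the moving lemma on the smooth quasi-projective variety $X^2$, I would replace $W, Z$ by rationally equivalent cycles $W', Z'$ so that $W' \times Z'$ meets $\Delta X$ properly; the difference is a boundary in $z^{d+1}(*_{X^2}, -\bullet)$, hence leaves the derived morphism unchanged. Applying $\Delta_X^*$ produces the intersection product $W' \cdot Z'$ on $X$, and applying $\pi_*$ produces the cycle $\pi_*(W' \cdot Z') \in z^1(*_S, 0)$.

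Finally, I would translate this element of $z^1(*_S, 0)$ into a class in $\Pic(S)$ via the quasi-isomorphism $z^1(*_S, -\bullet) \simeq \Gm[1]$, justified because $S$ is smooth, so every Weil divisor is Zariski-locally principal, forcing $\cH^0(z^1(*_S, -\bullet)) = 0$ while $\cH^{-1}(z^1(*_S, -\bullet)) = \Gm$ by \cref{ex:CH1(1)}. Under $\CH^1(S) \simeq \Pic(S)$, the class of $\pi_*(W' \cdot Z')$ is precisely $\pi_*([W'][Z']) = \pi_*([W][Z])$. The main obstacle will be the moving step: one must verify that the replacement of $W \times Z$ by a representative meeting $\Delta X$ properly is compatible with all the quasi-isomorphisms and adjunctions comprising $t$. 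The quasi-projectivity of $S$ (hence of $X^2$) is used precisely to execute this moving, and the resulting homotopies are absorbed by \cref{subcpx} and by $\tau_{\geqq -1}$, so that the remainder of the argument is formal bookkeeping in the derived category.
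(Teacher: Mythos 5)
The paper does not actually prove this proposition; it is stated and attributed to Seibold's thesis~\cite[6.8 Satz]{SeiBier}, with \cref{rem:propCalcSign} flagging the dependence on the normalization of \cref{ex:CH1(1)}. So there is no in-paper proof to compare against, and I will instead assess your sketch on its own merits.

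Your approach is the natural one and is, in outline, correct: by the SGA~7 dictionary, the fiber $\bE_{W,Z}$ of the biextension associated to the class \cref{biext} over the section $(W,Z)$ has class $\eta\circ(W\otimes^{\bL} Z)\in\Hom_{D(S)}(\bZ,\Gm[1])=\Pic(S)$, and since $W\otimes Z$ is a degree-$0$ cocycle, this factors through $t\circ(*\times *)$ applied to $W\otimes Z$. Inverting the two quasi-isomorphisms in the definition of $t$ amounts to replacing $W\times Z$ by a rationally equivalent cycle on $X^2$ meeting $\Delta X$ properly, after which $\Delta_X^*$ and $\pi_*$ produce $\pi_*(W'\cdot Z')\in z^1(S,0)$, and the Gersten-type quasi-isomorphism $z^1(*_S,-\bullet)\simeq\Gm[1]$ (valid on smooth $S$ since Weil divisors are locally principal) sends this to $\cO_S(\pi_*(W'\cdot Z'))$, which represents $\pi_*([W][Z])$. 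This recovers the proposition.

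A few imprecisions worth fixing if you were to write this out: (i) you invoke the ``moving lemma on $X^2$,'' but the displacement you actually describe --- moving $W$ and $Z$ separately so that $W'\cap Z'$ has the expected dimension in each fiber --- is Chow's moving lemma on $X$; the paper's inversion step uses the sheafified quasi-isomorphism $z^{d+1}_{\{\Delta X\}}(*_{X^2},-\bullet)\to z^{d+1}(*_{X^2},-\bullet)$ of \cite[Theorem 1.10]{KLAddHighChow}, which allows a general representative on $X^2$, not necessarily of product form. Both give the same Chow class after $\Delta_X^*$ and hence the same answer, but the logic should be stated cleanly. (ii) The remark that ``homotopies are absorbed by \cref{subcpx} and by $\tau_{\geqq -1}$'' is not the right justification: for the \emph{isomorphism class} you are computing a morphism in $D(S)$, where quasi-isomorphisms are invertible by fiat, so no explicit homotopy bookkeeping is needed --- that care becomes necessary only when one compares \emph{sections} of the biextension (as in \cref{clm:descent} and \cref{sec:ArchCompSm}), not its fiber torsor classes. (iii) You should make the sign normalization explicit, since, per \cref{rem:propCalcSign}, the identification of $z^1(*_S,-\bullet)$ with $\Gm[1]$ via \cref{ex:CH1(1)} is exactly where the sign enters; an orientation mistake there would flip the line bundle. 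None of these is a fatal gap, but (ii) in particular suggests a slight misreading of where the homotopy subtleties actually live in this paper.
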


\begin{rem} \label{rem:propCalcSign}
    \cref{prop:Calc} depends on the normalization of \cref{ex:CH1(1)}.
\end{rem}

We go back to the case of general \(S\).
Suppose that \(W\) and \(Z\) are disjoint.
Then \cite[6.8 Satz]{SeiBier} actually also shows that \(\bE_{W, Z}\) is trivial.
For more precision, we briefly present the following variant of the construction in \cite{BloBiext,SeiBier}.
We start by modifying \cref{rem:Bloch}, incorporating properly intersecting sections.
Let \(\Psi_1\) be the subcomplex of \(z^m_{\hom}(*_{X/S}, 0)\otimes_\bZ z^n_{\hom}(*_{X/S}, -\bullet)\) whose \(p\)-th term for an integer \(p\) is generated by sections of the form \(W\otimes Z\), where \(Z\) is a section of \(z^n_{\{W\}}(*_{X/S}, -p)\).
The restriction of \(*\times *\) to \(\Psi_1\) factors through \(\Delta_S^{-1}\pi^2_*z^{d+1}(*_{X^2}, -\bullet, -\bullet)_\Delta\).
The resulting \(\Psi_1 \to \Delta_S^{-1}\pi^2_*z^{d+1}(*_{X^2}, -\bullet, -\bullet)_\Delta\to \Delta_S^{-1} \pi^2_*z^{d+1}(*_{X^2},-\bullet)\) in turn factors through \(\Delta_S^{-1}\pi^2_*z_{\Delta X}^{d+1}(*_{X^2},-\bullet)\).
We eventually obtain a morphism \(\Psi_1\to z^1(*_S, -\bullet)\) of complexes rather than that in the derived category.
The long exact sequence of cohomology associated with its mapping cone \(C_1\) gives
\[
    H^{-1}(\Psi_1)\xrightarrow{0} \bG_{\mathrm{m}, S}\to H^{-1}(C_1)\to z^m_{\hom}(*_{X/S}, 0)\otimes_\bZ \bfA^n(X/S) \to 0,
\]
where the leftmost morphism vanishes due to \cref{lem:theta}, and \(H^0(\Psi_1) = z^m_{\hom}(*_{X/S}, 0)\otimes \bfA^n(X/S)\) by \cite[Theorem 1.10]{KLAddHighChow}.
The extension can be seen as a biextension of \((z^m(*_{X/S}, 0), \bfA^n(X/S))\) by \(\bG_{\mathrm{m}, S}\).

Consider a similar subcomplex \(\Psi_2\) of \(z^m_{\hom}(*_{X/S}, -\bullet)\otimes_\bZ z^n_{\hom}(*_{X/S}, 0)\) and a mapping cone \(C_2\).
This gives rise to the extension \(H^{-1}(C_2)\) of \(\bfA^m(X/S)\otimes_\bZ z^n_{\hom}(*_{X/S}, 0)\) by \(\bG_{\mathrm{m}, S}\).
This also gives a biextension.
These extensions coincide when pulled back to \(\Psi_1^0 = \Psi_2^0\), easily checked without concerns about homotopy here.
Similar comparison works for torsors over \((z^m_{\hom}(*_{X/S}, 0)\times z^n_{\hom}(*_{X/S}, 0))\cap \Delta_S^{-1}\pi^2_*z^{d + 1}_{\{\Delta X\}}(*_{X^2}, 0)\).
The last sheaf is a bisubgroup \cite[Definition 3.1]{GorBiext} sheaf of \(z^m_{\hom}(*_{X/S}, 0)\times z^n_{\hom}(*_{X/S}, 0)\).
It makes sense to talk about biextensions over it, and we can strengthen the comparison of torsors to that of biextensions if we want.

Suppose that \(U\subseteq S\) is an open subset.
Also suppose that \(W\in z^m_{\hom}(U_{X/S}, 0), Z\in z^n_{\hom}(U_{X/S}, 0)\) are disjoint.
Then \((0, W\otimes Z)\in \Ker\partial_C^{-1} \subseteq z^1(U, 1)\oplus \Psi_1^0\) defines \(\langle W, Z\rangle\in H^{-1}(C_1)\).
We use the same notation for \([(0, W\otimes Z)]\in H^{-1}(C_2)\) since these cohomology classes in the extensions coincide under the above pull-back to \(\Psi_1^0 = \Psi_2^0\).

\begin{clm} \label{clm:descent}
    There exist
    \begin{itemize}
        \item a biextension of \((\bfA^m(X/S), \bfA^n(X/S))\) by \(\bG_{\mathrm{m}, S}\) that simultaneously descend those induced by \(H^{-1}(C_1), H^{-1}(C_2)\) and that has the isomorphism class given by \cref{biext},
        \item and sections \(\langle W, Z\rangle\) of the biextension for disjoint \(W\in z^m_{\hom}(U_{X/S}, 0), Z\in z^n_{\hom}(U_{X/S}, 0)\) for open subsets \(U\subseteq S\) that descend those in \(H^{-1}(C_1), H^{-1}(C_2)\).
    \end{itemize}
\end{clm}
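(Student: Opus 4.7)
The plan is to construct a single intermediate cone whose cohomology simultaneously descends both $H^{-1}(C_1)$ and $H^{-1}(C_2)$. Let $\Psi$ be the subcomplex of $z^m_{\hom}(*_{X/S}, -\bullet)\otimes_\bZ z^n_{\hom}(*_{X/S}, -\bullet)$ whose term in bidegree $(-p,-q)$ is spanned, before sheafification, by tensors $W\otimes Z$ where $W$ is a section of $z^m(*_X,-p)$ and $Z$ is a section of $z^n_{\{|W|\}}(*_X,-q)$, with generic homological triviality imposed on $W$ when $p=0$ and on $Z$ when $q=0$. By construction, $\Psi$ contains both $\Psi_1$ and $\Psi_2$ along the two coordinate axes.

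Following the same factorization as in \cref{sec:Constr}, the restriction of $*\times*$ to $\Psi$ lands in $\Delta_S^{-1}\pi^2_*z^{d+1}_{\{\Delta X\}}(*_{X^2},-\bullet)$, so composing with $\Delta_X^*$ and $\pi_*$ yields an honest morphism of complexes $\tilde t\colon\Psi\to z^1(*_S,-\bullet)$. Let $C$ be its mapping cone shifted by $-1$, so that $C^{-1}=z^1(*_S,1)\oplus\Psi^0$. A symmetric application of \cref{lem:theta} to both factors, together with \cref{lem:HomAlg}, shows that the relevant boundary map $H^{-1}(\Psi)\to\Gm$ vanishes, yielding a short exact sequence
\[
    0\to\Gm\to H^{-1}(C)\to H^0(\Psi)\to 0.
\]

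The key identification is that $H^0(\Psi)$, after sheafification, equals $\bfA^m(X/S)\otimes_\bZ\bfA^n(X/S)$: this follows from a moving-lemma argument as in \cite[Theorem 1.10]{KLAddHighChow}, since every local section of the tensor product is representable by a sum of elementary tensors $W\otimes Z$ whose supports meet properly in $X$. The bi-linearity of $\tilde t$, combined with \cite[Expos\'e VII, Corollaire 3.6.5]{SGA7}, promotes the resulting $\Gm$-extension to a $\Gm$-biextension of $(\bfA^m(X/S),\bfA^n(X/S))$. Its isomorphism class agrees with the one from \cref{biext} because both are produced by the same intersection product modulo the subcomplex \cref{subcpx}, and pulling back along $\Psi_i\hookrightarrow\Psi$ recovers $H^{-1}(C_i)$ tautologically.

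For the second bullet, take disjoint $W\in z^m_{\hom}(U_{X/S},0)$ and $Z\in z^n_{\hom}(U_{X/S},0)$. Then $W\otimes Z\in\Psi^0(U)$ and $\tilde t(W\otimes Z)=0$ because $W\times Z$ is disjoint from $\Delta X$, so its pullback along $\Delta_X$ vanishes before $\pi_*$ is applied. Hence $(0,W\otimes Z)\in C^{-1}(U)$ is a cocycle, and its class $\langle W,Z\rangle\in H^{-1}(C)(U)$ manifestly restricts to the sections previously defined via $\Psi_1$ and $\Psi_2$. The main obstacle is the descent step identifying $H^0(\Psi)$ with $\bfA^m\otimes\bfA^n$: it requires a moving lemma in the relative setting that simultaneously preserves generic homological triviality and enforces proper intersection of supports, together with checking that the bi-additivity of $\tilde t$ modulo \cref{subcpx} is precisely what makes the extension a biextension in the sense of SGA 7.
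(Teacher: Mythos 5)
Your approach --- building a single intermediate subcomplex $\Psi$ containing both $\Psi_1$ and $\Psi_2$ and mapping it honestly to $z^1(*_S,-\bullet)$ --- is genuinely different from the paper's and, as written, has gaps that are unlikely to be fixable. The key obstruction is the claim that ``composing with $\Delta_X^*$ and $\pi_*$ yields an honest morphism of complexes $\tilde t\colon\Psi\to z^1(*_S,-\bullet)$.'' Examine why this works for $\Psi_1$: there $W$ sits in simplicial degree $0$, so $W\times Z$ lives in $\Delta^{-q}_{X^2}$ rather than in $\Delta^{-p}\times\Delta^{-q}\times X^2$; the double-indexed complex degenerates, the inclusion $(\cdots)_\Delta\hookrightarrow(\cdots)$ is an equality, the triangulation map is the identity, and the proper intersection of $W$ with $Z$ forces the image to lie in $z^{d+1}_{\{\Delta X\}}(*_{X^2},-\bullet)$, so every backward arrow in the zigzag defining $t$ trivializes. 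For your $\Psi$, where both factors range over all simplicial degrees, none of this holds: for $p,q>0$ the triangulation of $\Delta^p\times\Delta^q$ is non-trivial, the $(\cdots)_\Delta$ condition involves the cross-diagonal faces of $\Delta^p\times\Delta^q$ and is not implied by your support condition $Z\in z^n_{\{|W|\}}$, and the factorization through $z^{d+1}_{\{\Delta X\}}$ after triangulation is not automatic. In addition, the very definition of $\Psi$ is not obviously a subcomplex: a section $W\in z^m(*_X,-p)$ with $p>0$ has support $|W|$ living in $\Delta^{-p}_X$ rather than in $X$, so $z^n_{\{|W|\}}(*_X,-q)$ does not match the paper's notation (which conditions on closed subsets of $X$), and stability of the condition under $\partial_W$ is unclear since $Z$ need not intersect $|\partial W|$ properly if it intersects $|W|$ properly.

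The paper avoids this route deliberately; the remark right after \cref{clm:descent} explains that one ``cannot help talking about homotopy'' when trying to get symmetric sections out of a single derived construction, and the introduction flags that a naive unified cone runs into exactly the issue you sidestep by asserting $\tilde t$ is honest. The paper's actual proof instead glues the two asymmetric extensions $H^{-1}(C_1)$ and $H^{-1}(C_2)$: by moving lemma, any local section of $\bfA^m\times\bfA^n$ is represented by disjoint sections, which trivialize both; the gluing data for varying $W$ and $Z$ within rational equivalence classes is checked against \cite[(3.5)]{BloBiext} and \cite[4.1 Satz]{SeiBier}, which rest on the Weil reciprocity law. Your proposal drops Weil reciprocity altogether, which cannot be right: it is precisely what guarantees the cocycle compatibility that, in your picture, you would need $\Psi$ to encode. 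Finally, your identification of the isomorphism class with \cref{biext} is asserted rather than proved; you would need a quasi-isomorphism between (a truncation of) $\Psi$ and the complex appearing in \cref{biext}, whereas the paper verifies this via the explicit two-term-by-two-term double subcomplex $\Phi$ of disjoint equidimensional sections, which is purpose-built to be quasi-isomorphic to $\bfA^m\otimes^{\bL}\bfA^n$ and to carry the explicit formula $(W\otimes Z, W'\otimes Z')\mapsto\sigma_Z(W)\sigma_{W'}(Z')$. Your $\Psi$ has neither of these properties verified, and the needed relative moving lemma --- which you flag as ``the main obstacle'' --- is exactly what is missing.
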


\begin{proof}
    Sections of \(\bfA^m(X/S)\times \bfA^n(X/S)\) locally come from disjoint sections of \(z^m_{\hom}(*_{X/S}, 0)\) and \(z^n_{\hom}(*_{X/S}, 0)\) by the moving lemma.
    Therefore, we can always trivialize the biextensions induced by \(H^{-1}(C_1), H^{-1}(C_2)\) with \(\langle W, Z\rangle\).
    The biextension of \((z^m_{\hom}(*_{X/S}, 0), z^n_{\hom}(*_{X/S}, 0))\) has been glued to those induced by \(H^{-1}(C_1), H^{-1}(C_2)\) by comparing \(\langle W, Z\rangle\) for rationally equivalent \(W\) and \(Z\).
    These gluing data combine to a new gluing data that gives a biextension of \((\bfA^m(X/S), \bfA^n(X/S))\) and its sections by \cite[(3.5)]{BloBiext} \cite[4.1 Satz]{SeiBier}.

    For a nonnegative integer \(p\), set \(z^p_{\hom, \equi}(*_{X/S}, 0)\) to be the subsheaf of \(z^p_{\hom}(*_{X/S}, 0)\) generated by the cycles that intersect properly with the fibers of \(\pi\).
    Let \(z^p_{\rat, \equi}(*_{X/S}, 0)\coloneqq \Ima \partial^{-1}_{\pi_*z^m(*_X, -\bullet)}\cap z^p_{\hom, \equi}(*_{X/S}, 0)\).
    Consider the double subcomplex \(\Phi\), with terms generated by the tensor products of the disjoint sections, of
    \begin{align*}        
        &(\dots \to 0\to z^m_{\rat, \equi}(*_{X/S}, 0)\to z^m_{\hom, \equi}(*_{X/S}, 0)\to 0\to \dots) \\
        \otimes_\bZ &(\dots \to 0\to z^n_{\rat, \equi}(*_{X/S}, 0) \to z^n_{\hom, \equi}(*_{X/S}, 0)\to 0\to \dots),
    \end{align*}
    where \(z^m_{\hom, \equi}(*_{X/S}, 0)\) and \(z^n_{\hom, \equi}(*_{X/S}, 0)\) are placed in degree \(0\).
    The subcomplex is quasi-isomorphic to \(\bfA^m(X/S)\otimes^\bL_\bZ \bfA^n(X/S)\) by the moving lemma.
    The biextension that we have just made has the isomorphism class corresponding to
    \[
        \begin{CD}
            \dots \to 0 @>>> \Phi^{-2} @>>> \Phi^{-1} = \Phi^{-1, 0}\oplus \Phi^{0, 1} @>>> \Phi^0 @>>> 0\to\dots \\
            @.              @VVV            @VVV                                            @VVV         @. \\
            \dots \to 0 @>>> 0 @>>> \bG_{\mathrm{m}, S} @>>> 0 @>>> 0\to\dots, \\
        \end{CD}
    \]
    with the middle vertical morphism
    \[
        (W\otimes Z, W'\otimes Z')\mapsto \sigma_{Z}(W)\sigma_{W'}(Z')\quad \textup{\cite[(3.2)]{BloBiext} \cite[4.1 Satz]{SeiBier}}.
    \]
    By the cited constructions, \(\Phi \to \bG_{\mathrm{m}, S}[1]\) equals \cref{biext}.
\end{proof}

By construction, our \(\langle W, Z\rangle\) are bilinear.
Also, if \(W\) is rationally trivial, then
\begin{align} \label{TwistedSection}
    \langle W, Z\rangle = \sigma_{Z}(W)\langle 0, Z\rangle.
\end{align}
A similar property holds if \(Z\) is rationally trivial.

\begin{rem}
    In defining sections of the biextension given through \cite[Expos\'e VII, Corollaire 3.6.5]{SGA7}, we cannot help talking about homotopy.
    For example, it would be hard for us to show \cref{TwistedSection} for the both cases of the rationally trivial \(W\) and of rationally trivial \(Z\) while embracing the derived nature of our construction at least for the two steps.
    First, \(\wt{WZ}\) in the proof of \cref{prop:Calc} depend a priori on the choice of \(Z\) and especially of \(W\) in their rational equivalence class.
    Second, even if we can show the independence of the choice of \(Z\), doing the same after switching the role of \(W\) and \(Z\) and gluing the two constructions would require similar arguments to the proof of \cref{clm:descent}.
    We can also think of assuring the independence of \(W\) and \(Z\) at the same time in a symmetric construction, but this makes the first difficulty more pronounced.

    In fact, these kinds of homotopy sometimes define nontrivial isomorphisms between biextensions \cite[Expos\'e VII, (2.5.4.1)]{SGA7}.
    To avoid the nontrivial isomorphisms, we sometimes need to argue indirectly later, constructing sections of biextensions that can be incompatible among different fibers of the projections.
    The subtleties often hide in morphisms of distinguished triangles.
\end{rem}

\cref{clm:descent} actually defines a rational section \(\langle W, Z\rangle\) of \(\bE_{W,Z}\) for \(W\in z^m_{\hom}(U_{X/S}, 0), Z\in z^n_{\hom}(U_{X/S}, 0)\) for open subsets \(U\subseteq S\).
With this in mind, we state \cite[6.8 Satz]{SeiBier} in its full strength.
\begin{prop}[{\cite[6.8 Satz]{SeiBier}}] \label{prop:FullCalc}
    Assume that \(S\) is quasi-projective over \(k\).
    Take \(W\in z_{\hom}^m(S_{X/S},0)\) and \(Z\in z_{\hom}^n(S_{X/S},0)\) that properly intersect.
    Then the divisor associated with the section \(\langle W, Z\rangle\) of \(\bE_{W,Z}\) is \(\pi_*(W\cdot Z)\).
\end{prop}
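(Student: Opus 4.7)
The plan is to compute $\mathrm{div}(\langle W, Z\rangle)$ by a local comparison with a regular section obtained after a rational equivalence that moves $W$ off $Z$. Since divisors can be checked at codimension-one points of $S$, fix a prime divisor $D\subset S$ with generic point $\eta$. If $\eta\notin |\pi_*(W\cdot Z)|$, then in a neighborhood $U$ of $\eta$ the cycle $W\cdot Z$ is disjoint from $\pi^{-1}(\eta)$, so $\langle W, Z\rangle$ is already a regular nonvanishing section of $\bfE_{W,Z}|_U$, and both sides of the claimed equality vanish at $\eta$. Hence we may assume $\eta\in |\pi_*(W\cdot Z)|$.

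After shrinking $S$ around $\eta$ (permissible since $S$ is quasi-projective and hence $X$ is also quasi-projective), Bloch's moving lemma on the smooth scheme $X$ produces $W'\in z^m_{\hom}(S_{X/S}, 0)$ whose support is disjoint from $|Z|$, together with $\tilde W\in \pi_* z^m(*_X, 1)(S)$ satisfying $\partial \tilde W = W - W'$ and intersecting $Z$ properly. Because $|W'|\cap |Z|=\emptyset$, the section $\langle W', Z\rangle$ is a regular trivialization of $\bfE_{W', Z}$. The biextension identifies $\bfE_{W,Z}$ canonically with $\bfE_{W',Z}$ via the rational equivalence $\tilde W$, and under this identification formula \cref{TwistedSection}, applied to the rationally trivial difference $W - W' = \partial \tilde W$, yields an identity of rational sections
\[
    \langle W, Z\rangle = \sigma_Z(\tilde W)\cdot \langle W', Z\rangle,
\]
where $\sigma_Z(\tilde W)\in K(S)^\times$ is obtained from $\pi_*(\tilde W\cdot Z)\in z^1(S, 1)$ via the Gersten-type quasi-isomorphism recalled in \cref{codim1} and the identification of \cref{ex:CH1(1)}.

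Since $\langle W', Z\rangle$ is regular and nonvanishing, $\mathrm{div}(\langle W, Z\rangle) = \mathrm{div}(\sigma_Z(\tilde W))$. Tracing through \cref{codim1}, the divisor of the rational function $\sigma_Z(\tilde W)$ is exactly $\pi_*(\partial \tilde W\cdot Z) = \pi_*(W\cdot Z) - \pi_*(W'\cdot Z)$. The disjointness $|W'|\cap|Z|=\emptyset$ forces the second term to vanish, giving $\mathrm{div}(\langle W, Z\rangle) = \pi_*(W\cdot Z)$, as required.

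The main obstacle I anticipate is the sign and normalization bookkeeping explicitly flagged in \cref{rem:propCalcSign}: the identification $\cO_T(T)^\times \simeq \CH^1(T, 1)$ of \cref{ex:CH1(1)}, the convention for the Gersten-style quasi-isomorphism of \cref{codim1}, and the pushforward $\pi_*$ on the various motivic complexes must all align so that no spurious sign intrudes in the twisted formula. In addition, one must check that the canonical isomorphism $\bfE_{W,Z}\simeq \bfE_{W',Z}$ coming from the biextension structure truly agrees with the one implicit in \cref{TwistedSection}; this amounts to ruling out the kind of nontrivial homotopy contribution warned about in the remark after \cref{clm:descent}. Once these conventions are pinned down, the argument above is unobstructed.
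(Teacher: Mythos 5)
The paper itself supplies no proof of this proposition — it simply cites \cite[6.8 Satz]{SeiBier}, whose argument (per the paper's discussion) goes through an explicit cocycle datum. Your argument is an independent derivation of the result from the intermediate facts the paper does establish, so it cannot be checked against a ``paper proof'' line by line; but it is correct, modulo the issues you yourself flag, and is a natural thing to do with the machinery of \cref{clm:descent}, \cref{TwistedSection} and \cref{codim1}. The chain is: localize at a codimension-one point $\eta$; shrink and move $W$ within its rational equivalence class to $W'$ disjoint from $|Z|$, with a witnessing $\tilde W$ chosen to meet $Z$ properly; use bilinearity plus \cref{TwistedSection} to write $\langle W,Z\rangle$ as the twist of the nowhere-vanishing $\langle W',Z\rangle$ by $\sigma_Z(\tilde W)$; then read off $\mathrm{div}(\sigma_Z(\tilde W)) = \partial\,\pi_*(\tilde W\cdot Z) = \pi_*((W-W')\cdot Z) = \pi_*(W\cdot Z)$ from the fact that the map in \cref{codim1} is a morphism of complexes and that $\partial Z = 0$. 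The nonvanishing of $\langle W',Z\rangle$ near $\eta$ then forces $\mathrm{ord}_\eta\langle W,Z\rangle = \mathrm{ord}_\eta\,\pi_*(W\cdot Z)$.

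Two small points worth cleaning up. First, the phrase ``the biextension identifies $\bE_{W,Z}$ canonically with $\bE_{W',Z}$ via the rational equivalence $\tilde W$'' is misleading: since $[W]=[W']$ in $\bfA^m(X/S)$, the torsors $\bE_{W,Z}$ and $\bE_{W',Z}$ are literally the same pull-back of the biextension along the same section of $\bfA^m(X/S)\times\bfA^n(X/S)$; no identification mediated by $\tilde W$ is involved (what $\tilde W$ does is produce $\sigma_Z(\tilde W)\in K(S)^\times$ relating the two choices of section, via $\langle W-W',Z\rangle = \sigma_Z(\tilde W)\langle 0,Z\rangle$). Second, you should say a word about why the moving lemma produces $\tilde W$ intersecting $Z$ properly; this is a standard refinement (and in the quasi-projective setting is exactly the sort of proper-intersection moving that \cite[Theorem 1.10]{KLAddHighChow} and the paper's use of it supply), but it is a condition, not automatic. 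Your concern about sign bookkeeping is real but harmless here, since \cref{rem:propCalcSign} makes clear that the statement is calibrated against the normalization of \cref{ex:CH1(1)}; once that convention and the convention in \cref{codim1} are fixed, the chain of identifications you use is exactly the one the paper has set up to make the signs cohere.
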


\part{Comparison with Archimedean biextensions} \label{chap:Arch}

This part of the paper compares the biextension obtained in \cref{chap:BloBiext} with that in \cite[\S 3]{HainBiext}.
First, we define the integral analog of the pre-variations of \(\bQ\)-mixed Hodge structures in \cref{sec:pVMHS}.
These are the variation of mixed Hodge structures, except that they do not satisfy the Griffiths transversality condition.
We need them to discuss the intermediate Jacobians as the sheaf version of \(Ext^1\) groups in the category of pre-variations.
Second, we discuss Archimedean biextensions introduced by Hain and by Brosnan--Pearlstein in \cref{sec:Arch}.

In the next \cref{sec:ArchCompSm}, we first show that the biextension by Bloch and that by Hain coincide.
Namely, we talk about the smooth case.
Toward this goal, we first construct the Deligne cohomology analog of the biextension by Bloch.
Namely, we suitably modify \cref{RoughBiextD} using \cref{lem:thetaDel} as we did in \cref{sec:Constr} for \cref{RoughBiext}.
The modification ends up with a biextension over the product of what we could call Betti (co)homologically trivial parts of the Deligne cohomology, namely intermediate Jacobians.
This Deligne cohomology analog intertwines the biextension \(E\) from \cref{chap:BloBiext} and the Archimedean biextension by Hain.

To show that an isomorphism exists between \(\bE\) and its Deligne cohomology analog, we can use the regulator map from higher Chow groups to Deligne cohomology by Binda--Saito.
However, this is not enough when we eventually want to compare \(\langle W, Z\rangle\) and \(B_{W, Z}\).
In general, the automorphism group of any biextension of a pair \((A, B)\) of abelian groups by an abelian group \(C\) is isomorphic to \(\Hom(A\otimes B, C)\) according to Grothendieck \cite[Expos\'e VII, (2.5.4.1)]{SGA7}.
Specifying the images of \(\langle W, Z\rangle\) suffices to pin down the element of this kind of automorphism group.
Prescribing the element of the automorphism group also means giving a specific homotopy in the commutative diagram
\[
    \begin{CD}
        A\otimes B @>>> C[1] \\
        @|              @| \\
        A\otimes B @>>> C[1].
    \end{CD}
\]
in the derived category.
We need to be careful in the process, since Deligne cohomology is defined as a cone of a derived morphism.
We resolve this issue by writing everything down in terms of the Godement resolution.

After all, we define \(\langle W, Z\rangle_\cD\) as the image of \(\langle W, Z\rangle\) under the regulator map at the level of complexes.
Strictly speaking, the defined sections do not belong to the Deligne cohomology analog of \(\bE\), but to the biextension of \((\bfA^m(X), z^n_{\hom}(*_{X/S}, 0))\).
That is, we break the symmetry that we have advertised.
It may be possible to recover the symmetry of \(\langle W, Z\rangle_\cD\). 
However, we need to abandon the symmetry anyway as soon as we mimic \cite[Theorem 7.11]{EVDB} in the comparison of the Deligne cohomology analog of \(\bE\) and what Hain did.

We switch the focus to \cref{prop:<WZ>DVSBWZ} that the Deligne cohomology analog of \(\bE\) and Hain's biextension agree.
Forgetting the symmetry of \(\langle W, Z\rangle_\cD\) enables a cohomological description of \(\langle W, Z\rangle_\cD\) similar to a description of the Deligne cycle class of \(W\).
Also, \(B_{W, Z}\) resembles the image of Abel--Jacobi maps in some \(\Ext^1\), which appears in the description of the Deligne cycle class.
These suggest that we can follow the argument in \cite[Theorem 7.11]{EVDB} as long as we pay attention to the increased number of distinguished triangles using \cref{lem:3x3}.

Finally, we treat the strongly semistable degeneration over a curve in \cref{sec:ArchComp}.
This means that our \(W, Z\) are generically homologically trivial now.
We have been interested in the asymptotic behavior of the height of \(B_{W, Z}^\circ\) near the singular fiber.
We first discuss in \cref{lem:Hodge} that after assuming the Hodge conjecture, we can modify \(W\) so that it has rational coefficients and is homologically trivial even in the singular fibers.
The argument rests on Hodge theory, especially the Clemens--Schmid exact sequence.

We finally compare the \(\bQ\)-line bundles by Seibold and by Brosnan--Pearlstein in \cref{thm:ArchComp}.
In this paragraph, we will be sloppy about the rational coefficients.
They are not an issue as soon as we multiply \(W\) and the \(\bQ\)-line bundles by appropriate integers.
After modifying \(W\) as in \cref{lem:Hodge}, we locally replace \(W, Z\) by rationally equivalent cycles \(W', Z'\) so that they do not intersect and they are equidimensional over the base.
The sections of the line bundle by Brosnan--Pearlstein are those of the line bundle by Hain in the smooth locus with bounded heights around the singular locus.
Therefore, to compare the two line bundles, it suffices to show that \(B_{W', Z'}^\circ\) has bounded height as we expect from the fact that \(\langle W', Z'\rangle\) is defined over a singular locus.
It turns out that we can repeat the construction of \(B_{W', Z'}^\circ\) in the smooth locus in our singular case, if we work in the category of mixed Hodge modules.
This is thanks to Brosnan--Fang--Nie--Pearlstein \cite[Lemma 2.18]{BFNPANF}.
They state certain exactness of the intermediate extension functor \(j_{!*}\) in the category of mixed Hodge modules.
Recall that \(j_{!*}\) usually preserves the injectivity and surjectivity, but not the exactness.
In the case of the curve \(S\), we have \(j_{!*}(V[1]) = (j_* V)[1]\) at the level of underlying complexes of sheaves for a variation \(V\) of \(\bQ\)-mixed Hodge structures on the smooth locus.
The failure to preserve the exactness comes from the failure of the right exactness of \(j_*\).
Eventually, we prove that applying \(j_{!*}\) to \(B_{W', Z'}^\circ\) in the smooth locus gives us the blended extension of \(\bQ\) by \(j_{!*}(R^{2m-1}\pi_*\bQ(m))\) by \(\bQ(1)\).
Again, this is unusual considering the general theory of mixed Hodge modules.
Indeed, the general discussion \cite[Remark 20]{BPJump} on so-called biextension\footnote{This ``biextension'' is in the sense of blended extensions in this paper.} Hodge structures that are admissible show that the resulting surjectivity of \(j_{*}\) leads to the bounded height.

We apply the comparison to get the asymptotic behavior in \cref{thm:intro}, namely \cref{thm:Apply}.
The leading term has the form \(q\log |t|\) for a rational number \(q\) as known by Lear \cite[(6.2.2)]{LeaExt}.
This \(q\) is related with the order of vanishing of the section \(B_{W, Z}\) \cite[Lemma 6.1]{HdJAsym}, where \(W\) is suitably modified as in \cref{lem:Hodge}.
We easily calculate this using \(\langle W, Z\rangle\), getting the non-Archimedean local height pairing by the compatibility of maps in motivic and etale cohomology used to define the non-Archimedean pairing.

\section{Pre-variations of mixed Hodge structures} \label{sec:pVMHS}

Our discussion will center around the pre-variations, not variations, of mixed Hodge structures because of its closer relationships with the intermediate Jacobians.
As in \cite[Definition 3.10]{FFVMHS>0}, the pre-variations stands for the variation of mixed Hodge structures except that we do not require the Griffiths transversality condition.

\begin{defi}
Let \(T\) be a complex manifold.
A \emph{pre-variation of mixed Hodge structures} on \(T\) is a pair of a local system \(L\) of finitely generated abelian groups on \(T\) and a pre-variation of \(\bQ\)-mixed Hodge structure \cite[Definition 3.9]{FFVMHS>0} which \(L\otimes_\bZ \bQ\) underlies.
\end{defi}

Let \(\pVMHS(T)\) (resp.~\(\VMHS(T)\)) be the category of the pre-variations (resp.~variations) of mixed Hodge structures on \(T\).
These are abelian categories \cite[Lemma 3.14]{FFVMHS>0}.
For \(H\in \pVMHS (T)\), define \(H_\bZ, H_\bQ, H_\cO\) to be the underlying \(\bZ\)-local system, \(\bQ\)-local system and finite free locally free \(\cO_T\)-module.
Also, let \(W_\bullet \coloneqq W_\bullet H_\bQ\) and \(F^\bullet \coloneqq F^\bullet H_\cO\) be the weight and Hodge filtrations of \(H\).

If \(G, H\) are pre-variations of Hodge structure on \(T\), then let \(Ext_{\pVMHS}^1(G, H)\) be the sheafification of the presheaf \(U\mapsto \Ext^1_{\pVMHS(U)}(G|_U, H|_U)\) on \(T\).
Similarly define \(Hom_{\pVMHS}\) as well as \(Ext_{\VMHS}^1\) for appropriate \(G, H\).
When \(G, H\) are pre-variations on \(T\) of mixed Hodge structures, the definition of \(Hom_{\pVMHS}(G, H)\) does not need a sheafification.
Furthermore, \(Hom_{\pVMHS}(G, H)\) is a local system by the identity theorem.
\begin{ex}[{\cite[Lemma 184]{BPJump}}] \label{ex:ExtJac}
    Without the need to sheafify,
    \[
        Ext_{\VMHS}^1(\bZ, \bZ(1)) = Ext_{\pVMHS}^1(\bZ, \bZ(1)) = \cO_{T}^\times.
    \]
\end{ex}

Now, let \(H\) be a variation of Hodge structure on \(T\) of weight \(-1\) and with torsion-free stalks of the underlying local system.
Then \(Ext_{\pVMHS}^1(\bZ, H)\) is precisely the sheaf of the holomorphic sections of the intermediate Jacobian of \(H\) \cite[\S C.3]{GZFam}, although we will reprove this later to fix our convention.
Also, set \(\cB(H)\) to be the sheaf \cite[Lemma 183]{BPJump} that carries an open subset \(U\subseteq T\) to the set of the isomorphic classes of the blended extensions in \(\VMHS(U)\) of \(\bZ\) by \(H|_U\) by \(\bZ(1)\).
By \cite[Proposition 4.4.1]{HarBiext}, we can check that \(\cB(H)\to Ext_{\VMHS}^1(H, \bZ(1))\times Ext_{\VMHS}^1(\bZ, H)\) is a biextension \cite[Corollary 185]{BPJump}.
The pre-variation analog follows from a surjectivity of a map between intermediate Jacobians.

Put \(H^\vee \coloneqq Hom_{\pVMHS}(H, \bZ(1)_T)\).
Note
\begin{align} \label{Dual}    
    Ext_{\VMHS}^1(H, \bZ(1)_T) = Ext_{\VMHS}^1(\bZ_T, H^\vee)
\end{align}
by the duality multiplied by \(-1\) following \cite[(3.1.7), (3.1.8)]{HainBiext}.

\section{Review of Archimedean biextensions} \label{sec:Arch}

This section reviews part of \cite[(3.4)]{HainBiext} and \cite[Theorem 241]{BPJump}.

\subsection*{The smooth case}
First, we deal with the former.
Let \(m, n\) be nonnegative integers such that \(m + n = d + 1\).
Suppose that \(\pi\) is smooth until the end of \cref{rem:sign}.
We specialize to the case when \(H = R^{2m-1}\pi_* \bZ(m)/\mathrm{tors}\), where \(\mathrm{tors}\) is the torsion part.
Suppose that \(W\in z_{\hom}^m(S_{X/S}, 0)\) and \(Z\in z_{\hom}^n(S_{X/S}, 0)\) intersect properly with the fibers of \(\pi\).
Also, assume that \(|W|\cap |Z| = \emptyset\).
First, let \(i_Z\colon |Z|\to X\) and \(j_Z\colon X\backslash |Z|\to X\) be the inclusions.
We have the distinguished triangle
\[
    R\pi_*i_{Z, *}Ri_Z^!\bZ(n)\to R\pi_*\bZ(n)\to R\pi_{Z, *}\bZ(n)\xrightarrow{+1},
\]
where \(\pi_Z\coloneqq \pi\circ j_Z\).
This induces the first of the three rows
\begin{equation} \label{E_Z}
\begin{tikzcd}[cramped, column sep=small]
R^{2n-1}\pi_* i_{Z,*} Ri_Z^!\bZ(n) \arrow[r] &
R^{2n-1}\pi_* \bZ(n) \arrow[r] &
R^{2n-1}\pi_{Z,*}\bZ(n) \arrow[r, "-1"] &
R^{2n}\pi_* i_{Z,*} Ri_Z^!\bZ(n) \arrow[r] &
R^{2n}\pi_* \bZ(n) \\
0 \arrow[r] \arrow[u, equal]&
R^{2n-1}\pi_* \bZ(n) \arrow[r] \arrow[u, equal] \arrow[d] &
E'_Z \arrow[r] \arrow[u] \arrow[d] &
\bZ_{S(\bC)} \arrow[r] \arrow[u, "{-[Z]}_{\mathrm{sing}}"] &
0 \arrow[u] \\
0 \arrow[r] &
\displaystyle\frac{R^{2n-1}\pi_* \bZ(n)}{\mathrm{tors}} \arrow[r] &
E_Z \arrow[r] &
\bZ_{S(\bC)} \arrow[r] \arrow[u, equal] &
0
\end{tikzcd}
\end{equation}
of compatible exact sequences, where we define the second row by the pullback by
\[
    \bZ_{S(\bC)}\xrightarrow{-[Z]_{\sing}} H^{2n}_{|Z|(\bC)}(X(\bC), \bZ(n))_{S(\bC)}\to R^{2n}(\pi\circ i_Z)_* \bZ(n).
\]
The morphism labeled by \(-1\) in the top row of \cref{E_Z} is the negative of the one that induces the usual construction in the snake lemma \cite[(1.3.3)]{ConBaseChange}.
We use \(-[Z]_{\sing}\) for \cref{rem:homology}.
The cup product
\[
    R^{2n-1}\pi_{Z, *}\bZ (n)\times R^{2m-1}\pi_{Z, !}\bZ (m)\to R^{2d}\pi_{Z, !} \bZ(d + 1)\to \bZ(1)_{S(\bC)}
\]
defines
\begin{align} \label{2dual}
    R^{2m-1}\pi_{Z, !}\bZ (m)\to (R^{2n-1}\pi_{Z, *} \bZ(n))^\vee \to E_Z^\vee.
\end{align}

Second, set \(i_W\colon |W|\to X\) and \(j\colon X\backslash (|W|\cup |Z|)\to X\backslash |Z|\) to be the inclusions.
We switch our focus to the distinguished triangle
\[
    R\pi_*i_{W, *}Ri_W^!\bZ(m)\to R\pi_{Z, !}\bZ(m)\to R\pi_{Z, !}Rj_*\bZ(m)\xrightarrow{+1}.
\]
As before, this induces the top exact row of the diagram
\[
\begin{tikzcd}[cramped]
    0 \arrow[r] & R^{2m-1}\pi_{Z, !}\bZ(m) \arrow[r] \arrow[d, equal] & R^{2m-1}\pi_{Z, !}Rj_*\bZ(m) \arrow[r, "-1"] & R^{2m}\pi_* i_{W, *}Ri_W^!\bZ(m) \arrow[r] & R^{2m}\pi_{Z, !} \bZ(m) \\
    0 \arrow[r] & R^{2m-1}\pi_{Z, !}\bZ(m) \arrow[r]                  & B_W(Z) \arrow[u] \arrow[r]                  & \bZ_{S(\bC)} \arrow[r] \arrow[u, "{-[W]_{\sing}}"] & 0. \arrow[u]
\end{tikzcd}
\]
The second row is defined by the pullback as in \cref{E_Z}, where the rightmost square is commutative as below.
\begin{rem} \label{rem:PL?}
    Our argument here is different from \cite[(3.3)]{HainBiext}.
    There the argument from PL topology is used.
\end{rem}
Consider the triangle
\[
    R\pi_{Z, !} \bZ(m) \to R\pi_*\bZ (m) \to R\pi_*i_{Z, *}\bZ (m)\xrightarrow{+1}.
\]
This induces the exact sequence
\[
    R^{2m-1}\pi_*i_{Z, *}\bZ (m)\to R^{2m}\pi_{Z, !} \bZ(m) \to R^{2m}\pi_*\bZ (m) \to R^{2m}\pi_*i_{Z, *}\bZ (m).
\]
Because of the proper intersection of \(Z\) with the fibers of \(\pi\), the most left and right terms are \(0\).
This invokes the desired commutativity.

Now, consider the pushout
\[
\begin{CD}    
    0 @>>> R^{2m-1}\pi_{Z, !}\bZ(m)@>>> B_W(Z) @>>> \bZ_{S(\bC)} @>>> 0 \\
    @.    @V{\cref{2dual}}VV           @VVV        @|              @.\\
    0 @>>> E_Z^\vee                @>>> B_{W, Z} @>>> \bZ_{S(\bC)} @>>> 0.
\end{CD}
\]
Note the short exact sequence
\[
    0\to \bZ(1)_{S(\bC)}\to E_Z^\vee \to R^{2m-1}\pi_*\bZ (m)/\mathrm{tors} = H \to 0
\]
by Poincar\'e duality.
We equip \(B_{W, Z}\) with the following structure of a blended extension:
\[
    0\to \bZ(1)_{S(\bC)}\xrightarrow{-1} E_Z^\vee \to B_{W, Z},
\]
where the sign is introduced due to the sign in \cref{Dual}.

The Poincar\'e line bundle minus its zero section over \(Ext_{\pVMHS}^1(\bZ, H)\times Ext_{\pVMHS}^1(\bZ, H^\vee)\) is isomorphic to the biextension
\[
    \cB (H)\to Ext_{\VMHS}^1(\bZ, H)\times Ext_{\VMHS}^1(\bZ, H^\vee)
\]
after the pullback, as summarized in, for example, \cite[\S D.2]{GZFam}.
We have \(B_{W, Z}\in \cB (H)(S(\bC))\).
It maps to \((E_W, E_Z)\in (Ext_{\pVMHS}^1(\bZ, H)\times Ext_{\pVMHS}^1(\bZ, H^\vee))(S(\bC))\).

\begin{rem} \label{rem:homology}
The original construction of \cite[\S 3]{HainBiext} uses homology rather than cohomology.
This coincides with our exposition due to the following isomorphisms.

We only describe the case \(S = \Spec\bC\) because \emph{loc. cit.} is the most detailed in that case.
By \cite{HofTri} and \cite[Theorem 3.5]{BryPL}, there are open neighborhoods \(\wt W\) and \(\wt Z\) of \(|W|(\bC)\) and \(|Z|(\bC)\) whose closures are disjoint and are homeomorphic to the mapping cones of some \(\partial \ol{\wt W}\to |W|(\bC)\) and \(\partial \ol{\wt Z}\to |Z|(\bC)\) so that \(\partial \ol{\wt W}\), \(|W|\), \(\partial \ol{\wt Z}\) and \(|Z|\) are identically preserved.
Some of the isomorphisms are Poincar\'e duality including
\[
    H^{2n}(X(\bC), X(\bC)\backslash |Z|(\bC), \bZ(n))\simeq H_{2m - 2}(|Z|(\bC), \bZ(1 - m))
\]
and isomorphisms involving Poincar\'e duality, excision and homotopy equivalences:
\begin{align*}
    &H^{2n - 1}(X(\bC)\backslash |Z|(\bC), \bZ(n))\xrightarrow{\simeq} H^{2n - 1}(X(\bC)\backslash \wt Z, \bZ(n))\xrightarrow{\simeq} H_{2m - 1}(X(\bC)\backslash \wt Z, \partial \wt Z, \bZ(1 - m)) \\
    &\xleftarrow{\simeq} H_{2m - 1}(X(\bC), \ol{\wt Z}, \bZ(1 - m))\xleftarrow{\simeq} H_{2m - 1}(X(\bC), |Z|(\bC), \bZ(1 - m)), \\
    &H^{2m - 1}(X(\bC)\backslash |W|(\bC), |Z|(\bC), \bZ(m))\xrightarrow{\simeq} H^{2m - 1}(X(\bC)\backslash \wt W, \ol{\wt Z}, \bZ(m)) \\
    &\xrightarrow{\simeq} H^{2m - 1}(X(\bC)\backslash (\wt W\sqcup \wt Z), \partial \wt Z, \bZ(m)) \xrightarrow{\simeq} H_{2n - 1}(X(\bC)\backslash (\wt W\sqcup \wt Z), \partial \wt W, \bZ(1 - n)) \\
    &\xleftarrow{\simeq} H_{2n - 1}(X(\bC)\backslash \wt Z, \ol{\wt W}, \bZ(1 - n)) \xleftarrow{\simeq} H_{2n - 1}(X(\bC)\backslash |Z|(\bC), |W|(\bC), \bZ(1 - n)).
\end{align*}
\end{rem}

\begin{rem} \label{rem:sign}
    We use \(B^\circ_{W, Z} \coloneqq B_{-W, -Z}\) rather than \(B_{W, Z}\) in what follows.
    This is because \(E_{-W}\) seen as an element of \(Ext^1_{\pVMHS}(\bZ_{S(\bC)}, H) \simeq H_\bZ\backslash H_\cO/F^0H_\cO\) is the image of \(W\) by the Abel--Jacobi map according to \cite[Proposition 3.2]{KLM} since the isomorphism as in \cite[(2.2)]{HainBiext} is the negative \cite[(2.2)]{HainBiext} of that of \cite{CarExt}.
    Also, \(B^\circ_{W, Z}\) and \(B_{W, Z}\) gives the same height in terms of \cite[Definition D.1]{GZFam}.
\end{rem}

\subsection*{The general case}
Next, we allow \(\pi\) to have singular fibers.
Let \(\emptyset \neq S_{\sm}\subseteq S\) be an open subset over which \(\pi\) is smooth.
Let \(j_{\sm}\colon S_\sm \to S\) be the inclusion.
For simple notations, we consider a torsion-free variation \(H\) of Hodge structures of weight \(-1\) on \(S_{\sm}\).
Also for simplicity, we assume that \(S\backslash S_\sm\) is a normal crossing divisor.
The article \cite{BPJump} discusses the extension of the fibers of \(\cB(H)\) as a line bundle.

For an open subset \(U\subseteq S(\bC)\), let \(\cB^{\ad}(H)(U)\) be the set of the isomorphism classes of blended extensions of \(\bZ_{U\cap S_\sm}\) by \(H|_{U\cap S_\sm}\) by \(\bZ(1)_{U\cap S_\sm}\) in the category \(\AVMHS(U)\) of the admissible \cite[Proposition 3.13]{SZVMHS}, \cite[1.8, 1.9]{KasVMHS} variations of mixed Hodge structures on \(U\cap S_\sm\subseteq U\).
This defines a sheaf \(\cB^{\ad}(H)\) on \(S(\bC)\) \cite[Lemma 183]{BPJump}.
Define \(Ext^1_{\AVMHS}\) as before.
In fact, \(\cB^{\ad}(H)\) is a biextension of \((Ext^1_{\AVMHS}(H, \bZ(1)_{S(\bC)}), Ext^1_{\AVMHS}(\bZ_{S(\bC)}, H))\) by \(j_{\sm, *}\cO_{S_\sm(\bC)}^\times\) \cite[Corollary 185, Theorem 81]{BPJump}.

\begin{ex}
Take nonnegative integers \(m, n\) such that \(m + n = d + 1\).
Take generically homologically trivial cycles \(W\in z_{\hom}^m(S_{X/S}, 0)\) and \(Z\in z_{\hom}^n(S_{X/S}, 0)\).
Let \(S_\sm\) be the locus over which \(\pi\) is smooth, and \(W, Z\) intersect properly with the fibers of \(\pi\).
Suppose that \(H = R^{2m-1}\pi_*\bZ_{\pi^{-1}(S_\sm)(\bC)}\).
Then \(B_{W, Z}\in \cB^\ad (H)(S(\bC))\) \cite{GNPPHyp}.
\end{ex}

To pin down a holomorphic line bundle on \(S(\bC)\) extending the fibers of \(\cB(H)\), we need the notion of a torsion pairing.
For a further simplicity, we assume that \(S\) is a curve.
Take \(B\in \cB^{\ad}(H)(S)\).
It induces a blended extension \(B_\bZ\) of the local system \(\bZ_{S_{\sm}(\bC)}\) by \(H_{\bZ}|_{S_{\sm}(\bC)}\) by \(\bZ(1)_{S_{\sm}(\bC)}\).
Take a point \(s\in S\backslash S_\sm\).
Let \(s\in \Delta\subset S_{\sm}(\bC)\cup \{s\}\) be a small disc around \(s\).
The induced extensions of \(\bQ_{\Delta\backslash \{s\}}\) by \(H_{\bQ}|_{\Delta\backslash \{s\}}\) and of \(H_{\bQ}|_{\Delta\backslash \{s\}}\) by \(\bQ(1)_{\Delta\backslash \{s\}}\) are trivial \cite[Theorem 2.11]{BFNPANF}.
The moduli of such blended extensions of \(\bZ\)-local systems over \(\Delta\backslash \{s\}\) form \cite[Proposition 188]{BPJump} \cite[Proposition 4.4.1]{HarBiext} a biextension of \((\Ext^1(H_{\bZ}|_{\Delta\backslash \{s\}},\bZ(1)_{\Delta\backslash \{s\}})_{\mathrm{tors}}, \Ext^1(\bZ_{\Delta\backslash \{s\}}, H_{\bZ}|_{\Delta\backslash \{s\}})_{\mathrm{tors}})\).
The biextension is classified \cite[Expos\'e VII, Corollaire 3.6.5]{SGA7} by
\begin{align} \label{biextTors}
    \Ext^1(H_{\bZ}|_{\Delta\backslash \{s\}},\bZ(1)_{\Delta\backslash \{s\}})\otimes_\bZ \Ext^1(\bZ_{\Delta\backslash \{s\}}, H_{\bZ}|_{\Delta\backslash \{s\}}) \to \Ext^1(\bZ_{\Delta\backslash \{s\}}, \bZ(1)_{\Delta\backslash \{s\}})[1]
\end{align}
in the derived category, where the derived tensor product is not needed since there are no homomorphisms from the corresponding \(\Tor_1\) to \(\bZ(1) = \Ext^1(\bZ_{\Delta\backslash \{s\}}, \bZ(1)_{\Delta\backslash \{s\}})\).
Especially, the mapping cone of \cref{biextTors} is the above moduli.
Our \cref{biextTors} induces
\begin{align} \label{TorsPair}
    \tau_s\colon \Ext^1(H_{\bZ}|_{\Delta\backslash \{s\}},\bZ(1)_{\Delta\backslash \{s\}})\otimes_\bZ \Ext^1(\bZ_{\Delta\backslash \{s\}}, H_{\bZ}|_{\Delta\backslash \{s\}}) \to \bQ(1)/\bZ(1)
\end{align}
via the short exact sequence \(0\to \bZ(1)\to \bQ(1)\to \bQ(1)/\bZ(1)\to 0\) and the torsionness.

The source of \cref{TorsPair} contains an element induced by \(B_\bZ\).
Its image has a lift \(\wt\tau_s(B)\in \bQ(1)\) defined as follows \cite[Proposition 192]{BPJump}.
By \cite[Theorem 2.11]{BFNPANF} cited above, \(B_\bQ\) induces the element \(\wt\tau(B)\) of the moduli \(\Ext^1(\bQ_{\Delta\backslash \{s\}}, \bQ(1)_{\Delta\backslash \{s\}}) = \bQ(1)\) of the biextensions of \(\bQ_{\Delta\backslash \{s\}}\) by \(H_{\bQ}|_{\Delta\backslash \{s\}}\) by \(\bQ(1)_{\Delta\backslash \{s\}}\) with trivial subextension and trivial quotient extension.

Now, we can formulate the extension of the fiber of \(\cB(H)\) to \(S(\bC)\) \cite[Theorem 241, Remark 242]{BPJump}.
Namely, take \(\nu\in Ext^1_{\AVMHS}(\bZ_{S_\sm(\bC)}, H)(S_\sm(\bC)), \omega\in Ext^1_{\AVMHS}(H, \bZ(1)_{S_\sm(\bC)})(S_\sm(\bC))\).
Then the holomorphic line bundle \(\cB(H)_{\omega, \nu}\) extends to the holomorphic \(\bQ\)-line bundle
\[
    \ol{\cB}(H)_{\omega, \nu} \coloneqq \frac{1}{N}(U\mapsto \{b\in \cB^{\ad}(H)_{\omega, N\nu}(U)\mid \wt\tau_s(b) = 0 \quad (s\in U\backslash S_\sm(\bC))\})
\]
on \(S\), where \(N\) is any integer such that \(N\tau_s(\omega, \nu) \in \bZ(1)\).

\section{The smooth case} \label{sec:ArchCompSm}

When \(\pi\) is smooth, we compare the biextension of \(\bfA^m(X/S)\) and \(\bfA^n(X/S)\) in \cite{BloBiext} with that in \cite[\S 3]{HainBiext}, although we do not impose the smoothness assumption until we explicitly do.
Our first main technical tool is the derived cycle map to Deligne cohomology in \cite[\S 8]{BSModulus} although maybe their construction was known in the case without modulus.

We first describe the cubical variant of \cref{biext} since \cite[\S 8]{BSModulus} uses cubical higher Chow groups.
Our \(t\) in \cref{biext} may be replaced by
\begin{align} \label{DefTCub}
\Delta_S^{-1}\pi^2_*z_c^{d+1}(*_{X^2}, -\bullet)_0\to z_c^1(*_S,-\bullet)_0
\end{align}
adjoint to
\begin{align*}
    &\pi^2_*z_c^{d+1}(*_{X^2}, -\bullet)_0\xleftarrow\simeq \pi^2_*z^{d+1}_{c, \{\Delta X\}}(*_{X^2},-\bullet)_0 \xrightarrow{\Delta_X^*} \pi^2_*\Delta_{X,*} z_c^{d+1}(*_X, -\bullet)_0 = \Delta_{S,*}\pi_* z_c^{d+1}(*_X, -\bullet)_0 \\
    &\xrightarrow{\pi_*} \Delta_{S,*} z_c^1(*_S,-\bullet)_0 \to R\Delta_{S,*} z_c^1(*_S,-\bullet)_0,
\end{align*}
where the first arrow is an isomorphism due to \cite[Theorem 1.10]{KLAddHighChow} applied to quasi-projective open subsets of \(S^2\).
Also, the last morphism is isomorphic thanks to Zariski descent \cite[Theorem 4.1.8]{ParLoc}.
We change \(z_{\hom}^m(*_{X/S}, -\bullet)\) by \(z_{c, \hom}^m(*_{X/S}, -\bullet)_0\), defined by switching the \(0\)-th term of \(\pi_*z_c^m(*_X, -\bullet)_0\) to its subsheaf of the generically homologically trivial cycles.
We similarly understand \(z_{c, \hom}^n(*_{X/S}, -\bullet)_0\).
As a substitute of \(*\times *\), use
\begin{align} \label{prelimTimesCub}
    &z_{c, \hom}^m(*_{X/S}, -\bullet)_0\otimes_\bZ z_{c, \hom}^n(*_{X/S}, -\bullet)_0\to \pi_*z_c^m(*_X, -\bullet)_0\otimes_\bZ \pi_*z_c^n(*_X, -\bullet)_0 \\
    =& \Delta_S^{-1}(\pr_{S,1}^{-1}\pi_*z_c^m(*_X, -\bullet)_0 \otimes_\bZ \pr_{S,2}^{-1}\pi_*z_c^n(*_X, -\bullet)_0) \nonumber \\
    \to& \Delta_S^{-1} \pi^2_*(\pi^2)^{-1}(\pr_{S,1}^{-1}\pi_*z_c^m(*_X, -\bullet)_0\otimes_\bZ \pr_{S,2}^{-1}\pi_*z_c^n(*_X, -\bullet)_0) \nonumber \\
    =& \Delta_S^{-1} \pi^2_*(\pr_{X,1}^{-1}\pi^{-1}\pi_*z_c^m(*_X, -\bullet)_0\otimes_\bZ \pr_{X,2}^{-1}\pi^{-1}\pi_*z_c^n(*_X, -\bullet)_0) \nonumber \\
    \to& \Delta_S^{-1} \pi^2_*(\pr_{X,1}^{-1}z_c^m(*_X, -\bullet)_0\otimes_\bZ \pr_{X,2}^{-1}z_c^n(*_X, -\bullet)_0) \nonumber \\
    \to& \Delta_S^{-1} \pi^2_*z_c^{d+1}(*_{X^2}, -\bullet)_0. \nonumber
\end{align}
Composing these and taking \(\tau_{\geqq -1}\) gives \cref{biext} by the compatibilities checked in \cref{sec:MotDef}.
We can define \(\langle W, Z\rangle\) similarly to and compatibly with the construction at the end of \cref{sec:Calc}.

\subsection*{The derived cycle map to Deligne cohomology}
We discuss the compatibility of \cite[\S 8.3]{BSModulus} with operations on cubical higher Chow groups and Deligne cohomology.
Although these compatibility have not been explicitly stated in the literature, they are consequences of the fact that Deligne cycle classes map to relevant Deligne cycle classes by those operations.
That, in turn, follows by checking the corresponding statements for the Betti and de Rham cycle classes.
For a smooth variety \(T\) over \(\bC\), we introduce the morphism \(\epsilon\colon T_{\an}\to T_{\Zar}\) of sites.
Also, \(\bZ (p)^{\cD}_T\) means the Deligne complex \cite[\S 8.2]{BSModulus}.
When \(F\subset T\) is a simple normal crossing divisor, we introduce the variant
\[
    \bZ (p)^\cD_{(T, F)}\to R\tau_* \bZ(p)_{(T\backslash F)(\bC)}\oplus \sigma_{\geqq p}\Omega_{T(\bC)}(\log F(\bC))^\an\to R\tau_*\bC_{(T\backslash F)(\bC)}\xrightarrow{+1},
\]
where \(\tau\colon T\backslash F\to T\) is the open immersion.

We begin with the outer product.
\begin{prop} \label{prop:CubDelProd}
    Let \(T, T'\) be smooth varieties over \(\bC\).
    Set \(p, p'\) to be nonnegative integers.
    Let \(\pr_1\colon T\times_\bC T'\to T\) and \(\pr_2\colon T\times_\bC T'\to T'\) be projections.
    The diagram
    \[
    \begin{tikzcd}
        \epsilon^{-1} (\pr_1^{-1}z_{c}^{p}(*_T, -\bullet)\otimes_\bZ \pr_2^{-1}z_{c}^{p'}(*_{T'}, -\bullet)) \arrow[r] \arrow[d, equal] &\epsilon^{-1}z_{c}^{p + p'}(*_{T\times_\bC T'}, -\bullet) \arrow[dd]\\
        \pr^{-1}\epsilon^{-1}z_{c}^{p}(*_T, -\bullet)\otimes_\bZ \pr_2^{-1}\epsilon^{-1}z_{c}^{p'}(*_{T'}, -\bullet) \arrow[d] & \\
        \pr^{-1}\bZ (p)^\cD_T[2p] \otimes_\bZ \pr_2^{-1}\bZ (p')^\cD_{T'}[2p'] \arrow[r] & \bZ (p + p')^\cD_{T\times_\bC T'}[2(p + p')]
    \end{tikzcd}
    \]
    commutes, where the two vertical morphisms are \cite[\S 8.3]{BSModulus}.
\end{prop}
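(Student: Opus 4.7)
The plan follows the strategy hinted at in the paragraph preceding the proposition: reduce the claimed derived compatibility to the corresponding classical compatibilities of the Betti and logarithmic de Rham cycle classes with exterior products.

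First, I would unpack the Binda--Saito regulator $\epsilon^{-1} z_c^p(*_T, -\bullet) \to \bZ(p)^{\cD}_T[2p]$. By their construction, a cycle $Z \in z_c^p(T, a)$ is sent to a class represented by a triple matching the three summands of the cone
\[
    \bZ(p)^{\cD}_T \simeq \Cone\bigl(R\tau_* \bZ(p) \oplus \sigma_{\geqq p}\Omega^\bullet_{T^{\an}} \to R\tau_* \bC\bigr)[-1],
\]
and after passing to Godement resolutions the whole assignment becomes a morphism of complexes of sheaves on the small Zariski site. In particular, the map factors functorially through explicit cochain-level Betti cycle classes, explicit Hodge-filtered log-differential form representatives, and an interpolating current.

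Second, I would verify term-by-term that the exterior product $Z \otimes Z' \mapsto Z \times_\bC Z'$ corresponds, under the Binda--Saito representatives, to the exterior product of the individual triples. For the Betti component this is the standard cross product compatibility of singular cycle classes. For the Hodge-filtered de Rham component it is the wedge product compatibility of Poincar\'e-dual log forms. The interpolating current component follows from the same compatibilities for currents, together with the fact that Godement resolutions of pull-backs and tensor products compare canonically to pull-backs and tensor products of Godement resolutions. This is precisely the reduction sketched in the paragraph preceding the proposition.

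The main obstacle will be sign bookkeeping. Four sources of signs must be reconciled simultaneously: the cubical sign $(-1)^{a' b}$ introduced in \cref{sec:MotDef} for the exterior product of $z_c^\bullet$, the Koszul signs from the derived tensor product over the projections $\pr_1, \pr_2$, the shift signs in $\bZ(p)^{\cD}_T[2p]\otimes \bZ(p')^{\cD}_{T'}[2p'] \to \bZ(p+p')^{\cD}_{T\times_\bC T'}[2(p+p')]$, and the sign in the cone defining Deligne cohomology. Once all signs cancel as expected, the diagram commutes at the level of complexes, since every intermediate arrow is canonical and the underlying cohomological identity $[Z \times_\bC Z']_{\cD} = [Z]_{\cD} \boxtimes [Z']_{\cD}$ is classical.
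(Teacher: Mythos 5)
Your sketch points in roughly the right direction (reduce to Betti and de Rham compatibilities), but it misidentifies how the Binda--Saito regulator is actually built and, as a result, misses the two moves that make the paper's proof work.

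First, the regulator of \cite[\S 8.3]{BSModulus} is \emph{not} given by assigning to a cubical cycle an explicit cochain-level triple with an ``interpolating current.'' That description fits the current-theoretic constructions of \cite{GonReg,KLM}, which the paper deliberately avoids. The Binda--Saito map is a zigzag through sheafified local Deligne cohomology with supports on the compactified cubes $(\bP^1_T)^n$, truncated by $\tau_{\leqq 2p}$, with Godement resolutions supplying functoriality. Consequently, what one must compare under exterior product is a \emph{cycle class} in $H^{2p}_{\ol W}\bigl((\bP^1_T)^n, \bZ(p)^\cD_{(\bP^1_T)^n,\,\partial}\bigr)$, not a concrete cocycle. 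Because of this, there is no separate ``interpolating component'' to verify: the paper invokes \cite[Theorem 8.5]{BSModulus} and \cite[Proposition 3.7]{EVDB}, which pin down the Deligne class with supports by its Betti and de Rham images, so the multiplicativity of $cl_\cD$ follows entirely from the classical multiplicativity of the Betti and de Rham classes (\cite[\S 3.2, Proposition 2]{ElZeiFond}). Your plan, as written, would have you check the third component directly, which is both unnecessary and not actually available in the Binda--Saito formalism.

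Second, once $cl_\cD^p(W)\otimes cl_\cD^{p'}(W')\mapsto cl_\cD^{p+p'}(W\times_\bC W')$ is known, one still has to promote this from an identity of local cohomology classes (over a fixed pair of cubes and a fixed support) to commutativity of the whole diagram of complexes of sheaves. The paper does this by transposing the proof of \cite[Proposition 4.7]{GLBK} to the cubical setting (noting that the triangulation step there can be skipped). This limit-over-supports argument is the piece your ``every intermediate arrow is canonical, so the diagram commutes at the level of complexes'' is papering over; without it, the compatibility of the external products with the Godement-resolution columns and the colimits over $\ol{W_0}$ is unproven. The sign bookkeeping you flag (the $(-1)^{a'b}$ from \cref{sec:MotDef}, Koszul signs, cone/shift signs) is real, but in the paper it is absorbed into following \cite[Proposition 4.7]{GLBK} with the conventions already fixed in \cref{sec:MotDef}, not resolved by a separate ad hoc cancellation.

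So: your high-level instinct (Betti + de Rham $\Rightarrow$ Deligne) is correct, but you need (i) to replace the ``triple/current'' picture by the local-cohomology-with-supports characterization of the Deligne class, so that there is literally nothing to check beyond Betti and de Rham, and (ii) to supply the \cite[Proposition 4.7]{GLBK}-style argument that turns a class-level statement into the asserted commutativity of the sheaf-complex diagram.
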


\begin{proof}
    Take nonnegative integers \(n, n'\).
    Let \(W, W'\) be integral closed subvarieties of \(T\times\square^n\) and \(T'\times\square^{n'}\) of codimension \(p, p'\), respectively.
    Set \(\ol W, \ol{W'}\) be their closures in \(T\times (\bP^1)^{n}\) and \(T'\times (\bP^1)^{n'}\), respectively.
    Then
    \begin{align*}
        &H^{2p}_{\ol W(\bC)}((T\times_\bC (\bP^1_\bC)^n)(\bC), \bZ(p)_{(T\times (\bP^1)^n, T\times ((\bP^1)^n\backslash\square^n))}^\cD) \\
        &\otimes_\bZ H^{2p'}_{\ol{W'}(\bC)}((T'\times_\bC (\bP^1_\bC)^{n'})(\bC), \bZ(p')_{(T'\times (\bP^1)^{n'}, T'\times ((\bP^1)^{n'}\backslash\square^{n'}))}^\cD) \\
        \to& H^{2(p + p')}_{(\ol W\times_\bC \ol{W'})(\bC)}((T\times_\bC T'\times_\bC (\bP^1_\bC)^{n + n'})(\bC), \bZ(p + p')_{(T\times_\bC T'\times (\bP^1)^{n + n'}, T\times_\bC T'\times ((\bP^1)^{n + n'}\backslash\square^{n + n'}))}^\cD)
    \end{align*}
    carries \(cl_\cD^p(W)\otimes cl_\cD^{p'}(W')\) to \(cl_\cD^{p + p'}(W\times_\bC W')\) because of \cite[Proposition 3.7]{EVDB} and because those Deligne cycle classes are defined using the Betti and de Rham cycle classes \cite[Theorem 8.5]{BSModulus}\footnote{The author is not entirely sure about whether the sign in the construction \cite[\S 6]{EVDB} of the de Rham cycle classes is correct. We normalize those classes so that they are compatible with the corresponding Betti cycle classes.}, which satisfy similar compatibilities \cite[\S 3.2, Proposition 2]{ElZeiFond}.
    Our proposition follows by copying the proof of \cite[Proposition 4.7]{GLBK}, where we can skip the arguments around triangulation.
\end{proof}

We now focus on the compatibility of the pullback.
\begin{prop} \label{prop:CubDelPull}
    Let \(f\colon T'\to T\) be a closed immersion of smooth varieties over \(\bC\).
    Take a nonnegative integer \(p\).
    The following diagram, where the vertical morphisms are \cite[\S 8.3]{BSModulus}, commutes:
    \[
        \begin{tikzcd}
            f^{-1}\epsilon^{-1}z_{c}^{p}(*_T, -\bullet) \arrow[d] \arrow[r, equal] & \epsilon^{-1}f^{-1}z_{c}^{p}(*_T, -\bullet) \arrow[r, "f^*"] & \epsilon^{-1}z_{c}^{p}(*_{T'}, -\bullet) \arrow[d] \\
            f^{-1}\bZ^{\cD}_T(p)[2p]             \arrow[rr, "f^*"] & &   \bZ^{\cD}_{T'}(p)[2p].
        \end{tikzcd}
    \]
\end{prop}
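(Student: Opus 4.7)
The plan is to adapt the argument in \cref{prop:CubDelProd} to the pullback situation, the main new feature being that $f^*$ on cubical higher Chow groups is not defined termwise on the full complex $z_c^p(*_T, -\bullet)$. First I would replace $\epsilon^{-1}z_c^p(*_T, -\bullet)$ by the sheafified version of the subcomplex $z_{c,\{T'\}}^p(*_T, -\bullet)_0$ consisting of cycles that intersect $T'\times F$ properly for every face $F\subseteq \square^n$, using the quasi-isomorphism from \cite[Theorem 1.10]{KLAddHighChow} reviewed in \cref{sec:MotDef}. On this subcomplex, $f^*$ is defined termwise by the Tor formula. Since the vertical map in \cite[\S 8.3]{BSModulus} is natural for restriction to subcomplexes of cycles that meet a given closed subscheme properly, it suffices to prove the commutativity after this replacement.

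Next, for an integral closed subvariety $W\subseteq T\times \square^n$ of codimension $p$ belonging to this subcomplex, with closure $\ol W\subseteq T\times (\bP^1)^n$, I would compare the two Deligne classes in the local cohomology group
\[
H^{2p}_{\ol W(\bC)}((T\times (\bP^1)^n)(\bC), \bZ(p)^\cD_{(T\times (\bP^1)^n,\, T\times ((\bP^1)^n\setminus \square^n))}).
\]
The Deligne cycle class of \cite[\S 8.2]{BSModulus} is built from the Betti and de Rham cycle classes through a cone construction, and both of these components are compatible with pullback along a regular closed immersion that meets $\ol W$ properly: for de Rham this is \cite[\S 3.2, Proposition 2]{ElZeiFond} (which I read using the normalization remark in the proof of \cref{prop:CubDelProd}), and the analogous statement for Betti cohomology is classical. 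Chasing these through the cone yields $f^*cl_\cD^p(W) = cl_\cD^p(f^* W)$.

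With this pointwise compatibility in hand, I would finish by the same bookkeeping as in the proof of \cref{prop:CubDelProd}: copy \cite[Proposition 4.7]{GLBK}, skipping the triangulation step, to promote the cycle-level identity to commutativity of the stated square of sheaves in the derived category. The zigzag through the subcomplex $z_{c,\{T'\}}^p(*_T, -\bullet)_0$ is reconciled by noting that both sides of the square factor through it naturally.

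The main obstacle is ensuring that the Godement-resolution model used to represent the vertical arrows of \cite[\S 8.3]{BSModulus} interacts correctly with the zigzag quasi-isomorphism between $\epsilon^{-1}z_c^p(*_T,-\bullet)$ and $\epsilon^{-1}z_{c,\{T'\}}^p(*_T,-\bullet)_0$. Concretely, one must verify that restricting a Godement-resolved cycle-complex to the subcomplex where $f^*$ makes sense does not introduce an extra homotopy that alters the class in $\bZ(p)_T^\cD[2p]$; this is why the purity input in the form of \cite[Theorem 1.10]{KLAddHighChow} is essential, and why the $\sigma_{\geqq p}$-truncation in the Deligne complex must be handled with care when pulling back.
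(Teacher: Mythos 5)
Your proposal follows essentially the same route as the paper: reduce via \cite[Theorem 1.10]{KLAddHighChow} to the Krishna--Levine subcomplex of cycles meeting $T'$ properly, then verify the compatibility of Deligne cycle classes in the local cohomology group by checking the Betti and de Rham components separately. The one discrepancy is in the references for the component compatibilities---the paper cites \cite[Proposition 19.2]{FulInt} for Betti and \cite[Lemma 7.3]{BSModulus} for de Rham pullback by a regular closed immersion, rather than \cite[\S 3.2, Proposition 2]{ElZeiFond}, which the paper invokes only for the outer product in \cref{prop:CubDelProd}---and your closing concern about Godement-resolution homotopies is tangential here, since the statement only asserts commutativity in the derived category.
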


\begin{proof}
    We take an integer \(n\geqq 0\).
    We may replace \(T\) by a quasi-projective open subscheme and consider \(W\in \underline{z}_{c, \{T'\}}^p(T, n)\) by \cite[Theorem 1.10]{KLAddHighChow}.
    Then we can define \(T'\cap W\in \underline{z}_{c}^p(T', n)\).
    Let \(\ol W\) be the closure of \(W\) in \(T\times (\bP^1)^n\).
    By
    \[
        H^{2p}_{\ol W(\bC)}(T(\bC), \bZ(p)_{(T\times (\bP^1)^n, T\times ((\bP^1)^n\backslash\square^n))}^\cD)\to H^{2p}_{(\ol W\cap T')(\bC)}(T'(\bC), \bZ(p)_{(T'\times (\bP^1)^n, T'\times ((\bP^1)^n\backslash\square^n))}^\cD),
    \]
    we have \(cl_\cD^p(W)\mapsto cl_\cD^p(T'\cap W)\) because of similar statements in Betti \cite[Proposition 19.2]{FulInt} and de Rham \cite[Lemma 7.3]{BSModulus} theory.
    This proves the desired compatibility.
\end{proof}

Next, we discuss the compatibility with pushforward.
\begin{prop} \label{prop:CubDelPush}
    Assume that \(\pi\) is smooth.
    Take an integer \(p\geqq 0\).
    The diagram
    \[
        \begin{tikzcd}
            \pi_*\epsilon^{-1} z_{c}^{p + d}(*_X, -\bullet) \arrow[d] & \epsilon^{-1}\pi_*z_{c}^{p + d}(*_X, -\bullet)  \arrow[l] \arrow[r, "\pi_*"] & \epsilon^{-1}z_c^p(*_S, -\bullet) \arrow[d]\\
            R\pi_*\bZ^{\cD}_X(p + d)[2p + 2d] \arrow[rr]              &                                                                              &   \bZ^{\cD}_S(p)[2p]
        \end{tikzcd}
    \]
    commutes, where the morphisms are as follows.
    \begin{itemize}
        \item   The downward maps are \cite[\S 8.3]{BSModulus}.
        \item   The bottom map is induced by the Verdier duality and integration along fibers, forming a well-defined map due to \cite[\S 5,6]{EHNPoin}, for example.
    \end{itemize}
\end{prop}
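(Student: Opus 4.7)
The plan is to follow the template established in the proofs of \cref{prop:CubDelProd} and \cref{prop:CubDelPull}: reduce the commutativity of the square to the single statement that the pushforward map on Deligne cohomology with supports carries Deligne cycle classes to Deligne cycle classes, and then check that statement by the analogous compatibilities for Betti and Hodge-filtered logarithmic de Rham cycle classes, combining them through the defining triangle of the Deligne complex.

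First I would fix an integer $n \geq 0$ together with an integral closed subvariety $W \subseteq X \times \square^n$ of codimension $p+d$ meeting the faces properly, and write $\overline{W} \subseteq X \times (\bP^1)^n$ for its closure. Since $\pi$ is proper, $(\pi \times \id)_*\overline{W}$ is well-defined and restricts to $\pi_* W \in \underline{z}_c^p(S, n)$ on $S \times \square^n$. I would then reduce the commutativity of the diagram to verifying that the pushforward
\[
H^{2(p+d)}_{\overline{W}(\bC)}\bigl((X \times (\bP^1)^n)(\bC), \bZ(p+d)^{\cD}_{(X \times (\bP^1)^n,\, X \times ((\bP^1)^n \setminus \square^n))}\bigr) \longrightarrow H^{2p}_{(\pi \times \id)_*\overline{W}(\bC)}\bigl((S \times (\bP^1)^n)(\bC), \bZ(p)^{\cD}_{(S \times (\bP^1)^n,\, S \times ((\bP^1)^n \setminus \square^n))}\bigr)
\]
sends $cl_\cD^{p+d}(W)$ to $cl_\cD^p(\pi_* W)$. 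Modulo this, the rest of the argument copies the last step of \cite[Proposition 4.7]{GLBK} verbatim, skipping the triangulation portion since we are already in the cubical setting.

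To prove the cycle-class compatibility, I would unwind the definition of the Deligne cycle class from \cite[Theorem 8.5]{BSModulus} into its Betti and $\sigma_{\geqq \star}$-truncated logarithmic de Rham components and treat them separately. The Betti half is classical, see \cite[\S 19]{FulInt}. The de Rham half should follow from the projection formula for integration along fibers on the logarithmic de Rham complex, together with the construction of de Rham cycle classes and their compatibility under Gysin maps as in \cite[\S 3.2, Proposition 2]{ElZeiFond} and \cite[\S 6]{EVDB}. Once both components are checked, the $\bC$-component is automatic because the Deligne cycle class is uniquely determined by its Betti and Hodge-filtered de Rham lifts through the defining distinguished triangle.

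The main obstacle I expect is bookkeeping for the Hodge filtration under pushforward. The Gysin map for $\pi \times \id$ shifts the Hodge filtration by $d$ and the cohomological degree by $2d$, and one must verify that these shifts match so that $\sigma_{\geqq p+d}\Omega^\bullet_{X \times (\bP^1)^n}(\log(X \times ((\bP^1)^n \setminus \square^n)))$ is sent to $\sigma_{\geqq p}\Omega^\bullet_{S \times (\bP^1)^n}(\log(S \times ((\bP^1)^n \setminus \square^n)))$ in a way compatible with the topological pushforward on the Betti component and with the localization triangles cutting out the supports. Verifying this carefully on the logarithmic Deligne complex with supports is the delicate point; the remaining commutativities are formal consequences of the cone construction and naturality of adjunctions.
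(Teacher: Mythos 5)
Your proposal follows the same overall strategy as the paper: reduce the commutativity of the square, exactly as in \cref{prop:CubDelProd} and \cref{prop:CubDelPull}, to the single assertion that the pushforward on Deligne cohomology with supports carries $cl_\cD^{p+d}(W)$ to $cl_\cD^p(\pi_*W)$, and then verify this on the Betti and de~Rham components separately using the characterization of the Deligne cycle class in \cite[Theorem 8.5]{BSModulus}. For the Betti half your citation of \cite[\S 19]{FulInt} matches the paper's (which points to \cite[Lemma 19.1.2]{FulInt} together with \cite[(7.12)]{BSModulus}).

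Where you diverge is precisely the place you flag as the delicate point. You propose to track the Gysin map and the Hodge-filtration shift directly on the brutally truncated logarithmic de~Rham complex $\sigma_{\geqq \star}\Omega^\bullet(\log -)$ with supports, appealing to \cite[\S 3.2, Proposition 2]{ElZeiFond} and \cite[\S 6]{EVDB}. The paper instead sidesteps this bookkeeping by representing differential forms and de~Rham cycle classes via currents: for currents the pushforward is simply integration along fibers, and its compatibility with the cycle class is the projection formula. This is the cleaner route because the sheaves of currents are fine and the pushforward is defined at the level of complexes without needing to wrestle with the non-flasqueness of $\sigma_{\geqq p}\Omega^\bullet(\log -)$ or to match the filtration shift against the cone construction by hand. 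Your route is not wrong in principle, but you have correctly identified that it requires a nontrivial verification you have not carried out; switching to currents, as the paper does, dissolves exactly that difficulty.
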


\begin{proof}
    As in the last two propositions and \cite[Lemma 4.4]{GLBK}, we check the compatibility of the Deligne cycle classes with the pushforward.
    For the Betti side, this follows from that of \cite[(7.12)]{BSModulus} and the Betti cycle classes \cite[Lemma 19.1.2]{FulInt}.
    The de Rham side is easily checked when we represent the sheaves of differential forms and cycle classes via currents.
    The compatibility of integration along fibers with the pushforward of currents is also known as projection formula. 
    The claimed commutativity follows.
\end{proof}

Finally, we compute the cycle class map on \(\CH^1(S, 1)\).

\begin{prop} \label{prop:CubDelGm}
    The following diagram commutes:
    \[
        \begin{CD}
            \cO_S(S)^\times @>>> \cO_{S(\bC)}(S(\bC))^\times \\
            @V{\textup{\cref{ex:CH1(1)Mix}}}VV    @| \\
            \CH^1(S, 1) @>{\textup{\cite[\S 8.3]{BSModulus}}}>> H_\cD^1(S(\bC), \bZ(1)).
        \end{CD}
    \]
\end{prop}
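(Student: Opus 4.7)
The plan is to reduce to a universal case by naturality, then compute the Binda--Saito regulator on the tautological class directly.

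First, I would argue that every composition in the diagram is natural with respect to morphisms of smooth $\bC$-varieties: the top horizontal arrow and the right vertical identification obviously, the left vertical identification by Example~\ref{ex:CH1(1)Mix}, and the bottom horizontal arrow by Proposition~\ref{prop:CubDelPull} together with its flat-pullback analogue (any morphism $S\to S'$ of smooth $\bC$-varieties factors as its graph followed by a smooth projection, and both sides respect such factorizations). Restricting to the Zariski open $U\subseteq S$ where $f\neq 1$ (the case $f\equiv 1$ maps to the identity on both sides trivially), the unit $f$ is pulled back from the tautological coordinate $t\in\cO(S_0)^\times$ via the morphism $U\to S_0\coloneqq \Spec \bC[t,t^{-1},(t-1)^{-1}]$ induced by $f$. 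It therefore suffices to verify commutativity for the universal pair $(S_0,t)$.

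Second, in the universal case, $\{t\}\in\underline{z}_c^1(S_0,1)$ is represented by the graph $\Gamma_t\hookrightarrow S_0\times\square$, whose closure $\bar\Gamma_t\subset S_0\times\bP^1$ is the graph of the open immersion $S_0\hookrightarrow\bP^1$, an effective smooth Cartier divisor meeting the modulus divisor $S_0\times\{1\}$ only at a codimension-two locus. By the construction of [BSModulus, \S 8.3], the image of $\{t\}$ in $H^1_\cD(S_0(\bC),\bZ(1))$ is obtained as the image of
\[
cl^1_\cD(\bar\Gamma_t)\in H^2_{\bar\Gamma_t(\bC)}\bigl((S_0\times\bP^1)(\bC),\bZ(1)^\cD_{(S_0\times\bP^1,\,S_0\times\{1\})}\bigr)
\]
under the Kunneth-type projection $\pi_*$ along $\pi\colon S_0\times\bP^1\to S_0$, which collapses $H^2_\cD(\bP^1,\bZ(1)^\cD_{(\bP^1,\{1\})})\cong \bZ$. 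Using the exponential quasi-isomorphism $\bZ(1)^\cD_Y[1]\simeq \cO_Y^\times$ and the classical fact that the Deligne cycle class of a principal Cartier divisor $\operatorname{div}(u)$ corresponds to the unit $u\in\cO^\times$, integration along the fiber $\bP^1$ picks out the local equation of $\bar\Gamma_t$ at the unique intersection point in each fiber, which is the value of the graphing function $t$; hence the image is $t\in\cO_{S_0(\bC)}(S_0(\bC))^\times$, matching the top horizontal arrow.

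Third, the principal obstacle lies in pinning down the normalization of the Kunneth isomorphism $H^2_\cD(\bP^1,\bZ(1)^\cD_{(\bP^1,\{1\})})\cong \bZ$ and the exponential identification $\cO^\times\cong H^1_\cD(-,\bZ(1))$, since the two together must deliver exactly $t$ rather than $t^{-1}$, $1-t^{-1}$, or another variant arising from the sign choices fixed in \cref{sec:MotDef} (notably the convention in \cref{ex:CH1(1)Mix} relating $\{a\}$ to $V(T_0+aT_1)$). This can be reduced, by base change along the inclusion of any closed point $s\in S_0(\bC)$ and using Proposition~\ref{prop:CubDelPull}, to verifying that the map $\bC^\times\cong \CH^1(\Spec\bC,1)\to H^1_\cD(\mathrm{pt},\bZ(1))\cong \bC^\times$ is the identity. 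This point-wise verification is classical (cf.\ the comparison in [EVDB, \S 7]) once the specific sign choices from \cref{ex:CH1(1)Mix} and [BSModulus, \S 8] are carefully compared; it amounts to checking that the Deligne cycle class of a closed point $\{a\}\subset \square_\bC$ equals $a$ under exponentiation.
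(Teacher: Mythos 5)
Your reduction to a universal (and then to a pointwise) verification is sound in outline: every nowhere-$1$ unit on $S$ pulls back from the tautological $t$ on $S_0=\Spec\bC[t,t^{-1},(t-1)^{-1}]$; once the diagram commutes on a dense open the identity theorem gives it on all of $S$; and evaluation at closed points detects equality in $\cO_{S_0(\bC)}(S_0(\bC))^\times$. The structure is thus a genuine departure from the paper, which makes no such reduction but instead computes the Deligne cycle class $cl^{1,1}_S(S')$ directly on $S$ and then transports the answer through a Godement-resolution model of the total complex by comparing the cubical differential $\partial_{\mathrm I}$ with the internal differential $\partial_{\mathrm{II}}$.

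However, your third step is a gap rather than a proof. The statement that needs to be verified at the point — that the Deligne cycle class of $\{a\}\subset\square_\bC$, pushed to $H^1_\cD(\mathrm{pt},\bZ(1))$, equals $a$ under exponentiation — is exactly the content of the proposition after your reductions, and declaring it ``classical'' with a pointer to [EVDB, \S 7] does not discharge it, since the possible answers $a$, $a^{-1}$, $1-a$, $1-a^{-1}$, $\ldots$ are not distinguished by any functoriality or group-theoretic argument; one has to actually compute. The paper's computation reveals precisely how nontrivial the normalization is: the Betti component of $cl^{1,1}_S(S')$ is $\delta$ applied to $T_f=(T-f)/(T-1)$ (not to $T-f$!), because the modulus divisor $S\times\{1\}$ enters the complex $\bZ(1)^\cD_{(\bP^1_S,F)}$; moreover the cone-shift sign in \cref{D2BdR} flips this to $cl^{1,1}_S(S')=-\delta(T_f)$; and finally one must trace how this class passes through the cubical total complex, where the face maps at $0,\infty$ produce $\partial_{\mathrm I}\alpha=f/1=f$. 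Your sketch via ``integration along the fiber picks out the local equation'' does not engage with the modulus condition at $\{1\}$ (which is what puts $T-1$ into the denominator of $T_f$) nor with the sign coming from \cref{D2BdR}, and those are exactly where the danger you flag in your last paragraph lives. To complete your argument you would have to do in the point case essentially the same explicit bookkeeping the paper does on $S$, at which point the reduction buys little.
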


\begin{proof}
Put \(F\coloneqq \bP^1_S\backslash \square_S\).
Also let \(\tau\colon \square_S\to \bP^1_S\) be the open immersion.
Take \(f\in \cO_S(S)^\times\).
The morphism it induces is denoted by \(f\colon S\to \bA^1_S\to \bP^1_S\).
Let \(S'\to \square_S\) be its base change.
This is the image of \(f\) by the left vertical map.

In this paragraph, we calculate
\[
    cl_{S}^{1,1}(S')\in H_{f(S)(\bC)}^2(\bP^1_S(\bC), \bZ(1)^\cD_{(\bP^1_S, F)}),
\]
the Deligne cycle class characterized \cite[Theorem 8.5]{BSModulus} by its images in
\[
    H^2_{f(S)(\bC)}(\bP^1_S(\bC), R\tau_*\bZ(1)_{\square_S(\bC)}), \quad H^2_{f(S)(\bC)}(\bP^1_S(\bC), \sigma_{\geqq 1}\Omega_{\bP^1_S(\bC)}(\log F(\bC))).
\]
Here, the corresponding morphism
\begin{align} \label{D2BdR}
    \bZ(1)^\cD_{(\bP^1_S, F)}\to R\tau_*\bZ(1)_{\square_S(\bC)}\oplus \sigma_{\geqq 1} \Omega_{\bP^1_S(\bC)}(\log F(\bC))
\end{align}
is the negative of the shift of the projection from a cone, in compliance with the axiom regarding shifting distinguished triangles in triangulated categories.
The image in the Betti cohomology is the image of \(T_f \coloneqq (T - f)/(T - 1)\in \cO_{\square_S(\bC)}^\times((\square_S\backslash S')(\bC))\) for the coordinate function \(T\) on \(\square_S\cap\bA^1_S\) by connecting homomorphisms
\begin{align*}
    &\cO_{\square_S(\bC)}^\times((\square_S\backslash S')(\bC))\xrightarrow{\delta} H^1_{S'(\bC)}(\square_S(\bC), \cO^\times_{\square_S(\bC)})\to H^2_{S'(\bC)}(\square_S(\bC), \bZ(1)_{\square_S(\bC)}) \\
    \cong& H^2_{f(S)(\bC)}(\bP^1_S(\bC), R\tau_*\bZ(1)_{\square_S(\bC)}).
\end{align*}
The de Rham side admits further injections
\begin{align*}
    &H^2_{f(S)(\bC)}(\bP^1_S(\bC), \sigma_{\geqq 1}\Omega_{\bP^1_S(\bC)}(\log F(\bC)))\xrightarrow{\textup{\cite[(7.5)]{BSModulus}}} H^1_{f(S)(\bC)}(\bP^1_S(\bC), \Omega^1_{\bP^1_S(\bC)}(\log F(\bC))) \\
    \xrightarrow{\textup{\cite[(5.6)]{BSModulus}}} &H^2_{S'(\bC)}(\square_S(\bC), \Omega^1_{\square_S(\bC)}).
\end{align*}
The image in the most right side is that of \(d\log T_f\) \cite[Theorem 6.3]{EVDB}.
Namely, the image is \(d\log \delta (T_f)\).
We conclude that \(cl_{S}^{1,1}(S') = - \delta (T_f)\) by the convention on \cref{D2BdR}.

For an integer \(p\geqq 0\), we take the Godement resolution \(I_p(1)\) of \(\cO_{\square^p_S(\bC)}^\times[-1]\), quasi-isomorphic to Deligne complex.
Also take the projection \(\pr\colon \square_S^p\to S\).
Let \(\alpha\) be the lift of \(T_f\) to \(I_1(1)^1\).
By the previous paragraph, \(\partial_{\mathrm{II}} \alpha = -cl_S^{1,1}(S')\), where \(\partial_{\mathrm{II}}\) is induced by the differential of the Godement resolution.
The image of \(f\) by the left and bottom homomorphisms of the statement of the proposition is given by \(\partial_{\mathrm{I}} \alpha\), the differential with respect to the cubical structure on \(\pr_*I_{\bullet}(1)\), because the last element is cohomologous to \(\partial_{\mathrm{II}} \alpha\) in \(\pr_*I_{-\bullet}(1)^\bullet\).
However, \(\partial_{\mathrm{I}} \alpha = f/1 = f\), where our calculation builds on the fact that \(S\times_\bC\{0\}, S\times_\bC\{\infty\} \square_S \backslash S'\).
\end{proof}

Assume that \(\pi\) is smooth until the end of this section.
We now introduce the Deligne cohomology analog of \cref{biext} and compare it with the cubical analog.
For a nonnegative integer \(p\), put
\[
    \bZ(p)^\cD_{X/S} \colon \dots \to 0\to \bZ(p)_{X(\bC)}\to \cO_{X(\bC)}\to \Omega_{X(\bC)/S(\bC)}^1\to \cdots \to \Omega_{X(\bC)/S(\bC)}^{p - 1}\to 0 \to \cdots,
\]
where \(\bZ(p)_{X(\bC)}\) is placed in degree \(0\).
Also define the subcomplex
\[
    \tau_{\leqq 2p}^{\hom} R\pi_*\bZ(p)_{X/S}^\cD \to \tau_{\leqq 2p} R\pi_*\bZ(p)_{X/S}^\cD
\]
so that these complexes share the same terms except the \(2p\)-th one and so that
\[
    H^{2p}(\tau_{\leqq 2p}^{\hom} R\pi_*\bZ(p)_{X/S}^\cD) = \Ker (R^{2p}\pi_*\bZ(p)_{X/S}^\cD \to R^{2p}\pi_*\bZ(p)_{X(\bC)}).
\]
This equals \(Ext^1_{\pVMHS}(\bZ, R^{2p - 1}\pi_* \bZ(p)/\mathrm{tors})\) as in the case \(S = \Spec \bC\) using \cite[\S C.3]{GZFam}, with The sign convention similar to \cite[(2.2)]{HainBiext}.
The genuine analogy to \(t\circ \cref{prelimTimes}\) would give
\begin{align} \label{prelimD}
    &R\pi_*\bZ(m)_{X/S}^\cD[2m]\otimes_\bZ^\bL R\pi_*\bZ(n)_{X/S}^\cD[2n] \to \Delta_S^{-1} R(\pi^2)_*\bZ(d + 1)_{X^2/S^2}^\cD[2d + 2] \\
    \to& \Delta_S^{-1} R(\pi^2)_*R\Delta_{X, *} \bZ(d + 1)_{X/S}^\cD [2d + 2] = \Delta_S^{-1} R\Delta_{S, *}R\pi_* \bZ(d + 1)_{X/S}^\cD [2d + 2] \nonumber \\
    \to& R\pi_{*} \bZ(d + 1)_{X/S}^\cD[2d + 2] \to \bZ(1)_{S}^{\cD}[2] = \bG_{\mathrm{m}, S}^{\an}[1]. \nonumber
\end{align}
We compose \cref{prelimD} with \((\tau_{\leqq 2m}^{\hom} R\pi_*\bZ(m)_{X/S}^\cD)[2m]\otimes_\bZ^\bL (\tau_{\leqq 2n}^{\hom} R\pi_*\bZ(n)_{X/S}^\cD)[2n]\).
After taking a quotient via the fiberwise application of \cref{lem:thetaDel} for \cref{lem:theta} and after taking \(\tau_{\geqq -1}\), we can take advantage of \cref{lem:HomAlg} to get
\begin{align} \label{biextD}
    Ext^1_{\pVMHS}(\bZ, R^{2m - 1}\pi_* \bZ(m)/\mathrm{tors})\otimes_\bZ^\bL Ext^1_{\pVMHS}(\bZ, R^{2n - 1}\pi_* \bZ(n)/\mathrm{tors})\to \bG_{\mathrm{m}, S}^{\an}[1].
\end{align}

\begin{prop}
The following diagram commutes:
\[
    \begin{CD}
        \epsilon^{-1}\bA^m(X/S)\otimes_\bZ^\bL \epsilon^{-1}\bA^n(X/S) @>{\cref{biext}}>> \epsilon^{-1}\Gm[1] \\
        @V{\textup{\cite[\S 8.3]{BSModulus}}\otimes \textup{\cite[\S 8.3]{BSModulus}}}VV                                           @VVV \\
        Ext^1_{\pVMHS}(\bZ, R^{2m - 1}\pi_* \bZ(m)_{X(\bC)}/\mathrm{tors})\otimes_\bZ^\bL Ext^1_{\pVMHS}(\bZ, R^{2n - 1}\pi_* \bZ(n)_{X(\bC)}/\mathrm{tors}) @>{\cref{biextD}}>> \bG_{\mathrm{m}, S}^\an[1].
    \end{CD}
\]
\end{prop}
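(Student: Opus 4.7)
The strategy is to reduce the claim to the four compatibility propositions just proved (\cref{prop:CubDelProd,prop:CubDelPull,prop:CubDelPush,prop:CubDelGm}) by unfolding the definitions of \cref{biext,biextD} and tracing the cycle class map through each intermediate arrow. I would first rewrite both morphisms using their cubical descriptions, since \cref{biext} admits the form \(\tau_{\geqq -1}\) applied to the composition of \cref{prelimTimesCub} and \cref{DefTCub}, and \cref{biextD} is obtained by analogous truncations/quotients from \cref{prelimD}. Before taking \(\tau_{\geqq -1}\) and quotienting by the subcomplex \cref{subcpx} (on the motivic side) and the analogous subcomplex furnished by \cref{lem:thetaDel} (on the Deligne side), both morphisms are built from the same six basic operations: the outer product on a pair of projections, the pullback along \(\Delta_X\), identification with push-forward along \(\Delta_S\), the pushforward along \(\pi\), the Zariski descent isomorphism, and the identification with \(\Gm[1]\).

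The plan is then to write down the relevant rectangular diagram, one square for each of these six operations, and to verify commutativity of each square separately. For the outer product square, apply \cref{prop:CubDelProd} to \(T = T' = X\), \(p = m\), \(p' = n\). For the square involving restriction to the diagonal closed immersion \(\Delta_X\colon X\to X^2\), apply \cref{prop:CubDelPull}; here one also needs the compatibility of the cycle class map with the quasi-isomorphism \(z_{c,\{\Delta X\}}^{d+1}(*_{X^2},-\bullet)_0\xrightarrow{\simeq} z_c^{d+1}(*_{X^2},-\bullet)_0\) on both sides, which is formal because the Deligne side uses an analogous local cohomology description. The square involving the pushforward along \(\pi\) is handled by \cref{prop:CubDelPush}. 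The square identifying \(\Delta_S^{-1}R\Delta_{S,*}\) with the identity requires only the observation that the unit of adjunction intertwines the two cycle class maps, which is functorial. Finally, the last identification of the target as \(\Gm[1]\) is precisely \cref{prop:CubDelGm}, applied sheaf-theoretically to quasi-projective open subsets of \(S\).

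It then remains to check that truncations and quotients commute with the cycle class map. The truncation \(\tau_{\geqq -1}\) commutes with any morphism of complexes, hence with the cycle class map. The quotient step requires checking that the cycle class map sends the subcomplex \cref{subcpx} into the analogous "kernel of homological triviality" subcomplex on the Deligne side; this follows fiberwise, because on cycles \(W\) that are generically homologically trivial, the Deligne cycle class \([W]_\cD\) lies in the kernel of the realization to \(R^{2m}\pi_*\bZ(m)_{X(\bC)}\), i.e., in \(\tau_{\leqq 2m}^{\hom} R\pi_*\bZ(m)_{X/S}^\cD\). Moreover, the vanishing of \(\theta_W\) in \cref{lem:theta} is intertwined with its Deligne analog \cref{lem:thetaDel} by the same compatibilities of the product and pushforward of cycle classes.

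The main obstacle will be bookkeeping rather than substance: one must verify that the zigzag of quasi-isomorphisms implicit in \(\cref{biext}\) (the triangulation from \(z_c^p(*_{X^2},-\bullet)_0\) through \(z_{c,\{\Delta X\}}^{d+1}\), and the Zariski descent for \(R\Delta_{S,*}\)) is correctly matched on the Deligne side. The former uses \cite[Theorem 1.10]{KLAddHighChow}, and its Deligne counterpart uses the fact that \(\bZ(p)^\cD\) satisfies local purity and Zariski descent \cite[Theorem 4.1.8]{ParLoc}. Once these matchings are checked at the level of homotopy categories, the diagram closes in the derived category, establishing the proposition.
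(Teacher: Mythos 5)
Your proposal follows essentially the same route as the paper: pass to the cubical model via \cref{DefTCub,prelimTimesCub}, apply \cref{prop:CubDelProd,prop:CubDelPull,prop:CubDelPush,prop:CubDelGm} operation by operation to get compatibility with the absolute Deligne-cohomology analog, then verify the factorization through \(\tau_{\leqq 2m}^{\hom} R\pi_*\bZ(m)^\cD_{X/S}\) so that truncations and the \cref{lem:HomAlg}-quotient descend. The only thing the paper states more explicitly than you do is the intermediate passage from the absolute Deligne complexes \(\bZ(p)^\cD_X\) (which is what the four propositions control) to the relative ones \(\bZ(p)^\cD_{X/S}\) appearing in \cref{prelimD}, via \(\Omega^1_{X(\bC)}\to\Omega^1_{X(\bC)/S(\bC)}\); you fold this into your final truncation step, which is fine.
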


\begin{proof}
Our \(\Delta_S^{-1}\cref{DefTCub}\circ \cref{prelimTimesCub}\) is compatible with
\begin{align*}
    &R\pi_*\bZ(m)_{X}^\cD[2m]\otimes^\bL R\pi_*\bZ(n)_X^\cD[2n] \to \Delta_S^{-1} R(\pi^2)_*\bZ(d + 1)_{X^2}^\cD[2d + 2] \\
    \to& \Delta_S^{-1} R(\pi^2)_*R\Delta_{X, *} \bZ(d + 1)_{X}^\cD [2d + 2] = \Delta_S^{-1} R\Delta_{S, *}R\pi_* \bZ(d + 1)_{X}^\cD [2d + 2] \\
    \to& R\pi_{*} \bZ(d + 1)_X^\cD[2d + 2] \to \bZ(1)_S^\cD[2] = \bG_{\mathrm{m}, S}^\an[1]
\end{align*}
by \cref{prop:CubDelProd,prop:CubDelPull,prop:CubDelPush,prop:CubDelGm}.
The last morphism forms a commutative diagram with \cref{prelimD} via the obvious morphism \(\Omega_{X(\bC)}^1\to \Omega_{X(\bC)/S(\bC)}^1\), etc.
Note that
\[
\epsilon^{-1}z^m_{c, \hom}(*_{X/S}, -\bullet)_0 \to \epsilon^{-1}\pi_*z^m_{c}(*_{X}, -\bullet)_0 \to R\pi_*\bZ(m)_{X}^\cD \to R\pi_*\bZ(m)_{X/S}^\cD
\]
factors through \(\tau_{\leqq 2m}^{\hom} R\pi_*\bZ(m)_{X/S}^\cD\).
The same is true when we replace \(m\) by \(n\).
These imply the desired result.
\end{proof}

\newgeometry{margin=10truemm}
\subsection*{Comparison of sections}
Here, we will compare \(\langle W, Z\rangle\) and \(B_{W, Z}^\circ\) as well as the biextensions that have them as sections.
Our \(\pi\) is still smooth.
For this, let \(W, Z\) be disjoint homologically trivial cycles on \(X\) of codimension \(m, n\).

We recall the image of \(\langle W, Z\rangle\) in the biextension defined by \cref{biextD}.
Here, we switch the role of \(W, Z\) from the construction in \cref{sec:Calc} for the convenience around \cref{clm:Jac(EZDual)}.
Take a distinguished triangle
\[
    R\pi_*\bZ(m)_{X/S}^\cD[2m]\to \bG_{\mathrm{m}, S}^\an [1] \to C_\cD \xrightarrow{+1}
\]
whose first morphism is
\begin{align} \label{biextDZ}
    R\pi_*\bZ(m)_{X/S}^\cD[2m]\xrightarrow{[Z]_\cD} R\pi_*\bZ(d + 1)_{X/S}^\cD[2d + 2]\xrightarrow{\pi_*}\bZ(1)_S^\cD[2]\simeq \bG_{\mathrm{m}, S}^\an [1],
\end{align}
where \([Z]_\cD\in H^{2n}(X(\bC), \bZ(n)_{X/S}^\cD)\) is the image of \([Z]_\cD\in H^{2n}(X(\bC), \bZ(n)_{X}^\cD)\).
It induces the long exact sequence
\[
    R^{2m - 1} \pi_*\bZ(m)_{X/S}^\cD \xrightarrow{0} \bG_{\mathrm{m}, S}^\an \to H^{-1}(C_\cD)\to R^{2m} \pi_*\bZ(m)_{X/S}^\cD \to 0,
\]
where applying \cref{lem:thetaDel} fiberwise kills the leftmost morphism.
This short exact sequence is the fiber of the biextension \cref{biextD} at \([Z]_\cD = E_{-Z}\), where the last equality was mentioned in \cref{rem:sign}.

By \cref{prop:CubDelProd,prop:CubDelPull,prop:CubDelPush,prop:CubDelGm}, our \cref{biextDZ} is compatible with
\[
    \pi_*z^m_{c,\{Z\}}(*_{X}, -\bullet)\to \pi_*z^{d + 1}_{c}(*_{X}, -\bullet)\to z^1(*_S, -\bullet).
\]
We need a more precise statement.
For a smooth variety \(T\) over \(\bC\) and a nonnegative integer \(p,q\), let \(I_q(p)_T\) be the Godement resolution of \(\bZ (p)^\cD_{((\bP^1_T)^q, T\times ((\bP^1)^q\backslash \square^q))}\).
As a variant, set \(I_0(p)_{X/S}\) to be the Godement resolution of \(\bZ (p)^\cD_{X/S}\).
For a finite set \(\sW\) of closed subsets of \(T\), let \(\sS_{T, \sW}^{p,q}\) be the set of those closed subsets \(V\) of \(T\times \square^q\) of pure codimension \(p\) such that \(V\) and each face of \(W_0\times \square^q\) intersect properly for every \(W_0\in \sW\) or \(W_0 = T\).
Let \(U\) be an open subset of \(S\).
The proofs of \cref{prop:CubDelProd,prop:CubDelPull,prop:CubDelGm} tell us that the following is a commutative diagram of complexes without any homotopy:
\[
{\fontsize{7truept}{7truept}\selectfont
\begin{tikzcd}[cramped, column sep=tiny]
    z_{c, \{Z\cap \pi^{-1}(U)\}}^m(\pi^{-1}(U), -\bullet) \arrow[d] \arrow[r]          & \varinjlim_{\sS_{\pi^{-1}(U), \{Z\cap \pi^{-1}(U)\}}^{m, -\bullet}} \frac{H^{2m}_{\overline{W_0}}\left(\bP_{\pi^{-1}(U)}, \bZ(m)^\cD_{(\bP_X, \partial)}\right)}{\mathrm{degn}} \arrow[d]                  & \varinjlim_{\sS_{U, \{Z\cap \pi^{-1}(U)\}}^{m, -\bullet}} \frac{\tau_{\leqq 2m}\Gamma_{\overline{W_0}}\left(\bP_{\pi^{-1}(U)}, I_{-\bullet}(m)_{X}\right)}{\mathrm{degn}}[2m] \arrow[l, "\simeq"] \arrow[d] \\
    z_{c, \{\Delta \pi^{-1}(U)\}}^{d + 1}(\pi^{-1}(U)^2, -\bullet) \arrow[r] \arrow[d] & \varinjlim_{\sS_{\pi^{-1}(U)^2, \{\Delta\pi^{-1}(U)\}}^{d + 1, -\bullet}} \frac{H^{2d^+}_{\overline{W_0}}\left(\bP_{\pi^{-1}(U)^2}, \bZ(d + 1)^\cD_{(\bP_{X^2}, \partial)}\right)}{\mathrm{degn}} \arrow[d] & \varinjlim_{\sS_{\pi^{-1}(U)^2, \{\Delta\pi^{-1}(U)\}}^{d + 1, -\bullet}} \frac{\tau_{\leqq 2d^+}\Gamma_{\overline{W_0}}\left(\bP_{\pi^{-1}(U)^2}, I_{-\bullet}(d + 1)_{X^2}\right)}{\mathrm{degn}}[2d^+] \arrow[l, "\simeq"] \arrow[d] \\
    z_c^{d + 1}(\pi^{-1}(U), -\bullet)  \arrow[r]                             & \varinjlim_{\sS_{\pi^{-1}(U), \emptyset}^{d + 1, -\bullet}} \frac{H^{2d^+}_{\overline{W_0}}\left(\bP_{\pi^{-1}(U)}, \bZ(d + 1)^\cD_{(\bP_X, \partial)}\right)}{\mathrm{degn}}                  & \varinjlim_{\sS_{\pi^{-1}(U), \emptyset}^{d + 1, -\bullet}} \frac{\tau_{\leqq 2d^+}\Gamma_{\overline{W_0}}\left(\bP_{\pi^{-1}(U)}, I_{-\bullet}(d + 1)_{X}\right)}{\mathrm{degn}}[2d^+] \arrow[l, "\simeq"]
\end{tikzcd}}
\]
\[
{\fontsize{7truept}{7truept}\selectfont
\begin{tikzcd}[cramped, column sep=tiny]
    \varinjlim_{\sS_{U, \{Z\cap \pi^{-1}(U)\}}^{m, -\bullet}} \frac{\tau_{\leqq 2m}\Gamma_{\overline{W_0}}\left(\bP_{\pi^{-1}(U)}, I_{-\bullet}(m)_{X}\right)}{\mathrm{degn}}[2m] \arrow[r] \arrow[d]                               & \frac{I_{-\bullet}(m)_{X}(\bP_{\pi^{-1}(U)})}{\mathrm{degn}}[2m] \arrow[d] & I_0(m)_{X}(\pi^{-1}(U))[2m] \arrow[l, "\simeq"] \arrow[r] \arrow[d] & I_0(m)_{X/S}(\pi^{-1}(U))[2m] \arrow[d]\\
    \varinjlim_{\sS_{\pi^{-1}(U)^2, \{\Delta\pi^{-1}(U)\}}^{d + 1, -\bullet}} \frac{\tau_{\leqq 2d^+}\Gamma_{\overline{W_0}}\left(\bP_{\pi^{-1}(U)^2}, I_{-\bullet}(d + 1)_{X^2}\right)}{\mathrm{degn}}[2d^+] \arrow[r] \arrow[d] & \frac{I_{-\bullet}(d + 1)_{X^2}(\bP_{\pi^{-1}(U)^2})}{\mathrm{degn}}[2d^+] \arrow[d] & I_0(d + 1)_{X^2}(\pi^{-1}(U)^2)[2d^+] \arrow[l, "\simeq"] \arrow[r] \arrow[d] & I_0(d + 1)_{X^2/S^2}(\pi^{-1}(U)^2)[2d^+] \arrow[d]\\
    \varinjlim_{\sS_{\pi^{-1}(U), \emptyset}^{d + 1, -\bullet}} \frac{\tau_{\leqq 2d^+}\Gamma_{\overline{W_0}}\left(\bP_{\pi^{-1}(U)}, I_{-\bullet}(d + 1)_{X}\right)}{\mathrm{degn}}[2d^+] \arrow[r]                        & \frac{I_{-\bullet}(d + 1)_{X}(\bP_{\pi^{-1}(U)})}{\mathrm{degn}}[2d^+]  & I_0(d + 1)_{X}(\pi^{-1}(U))[2d^+] \arrow[l, "\simeq"] \arrow[r]  & I_0(d + 1)_{X/S}(\pi^{-1}(U))[2d^+], \\    
\end{tikzcd}}
\]
with the following explanations.
\begin{itemize}
    \item In all of the injective limits, the running variables are \(W_0\).
    \item We mean the degenerate part of cubical objects by \(\mathrm{degn}\).
    \item In the second, third and fourth columns, we are taking the total complex of the double complex induced by the cubical structure, leading to the first index, and the complexes in each term, giving rise to the second index.
    \item The morphisms from the first to the second row are given by choosing an element of
    \begin{align} \label{LiftDel}
        (\tau_{\leqq 2n}\Gamma_{|Z|(\bC)}\left(X(\bC), I_0(n)_{X}\right))^{2n}
    \end{align}
    that lifts \([Z]_\cD\) and by using the monoidal property of the Godement resolutions \cite[\S 6.4, e)]{Godement} \cite[\S 3.1]{LevCub}.
    \item Put \(\bP\coloneqq (\bP^1)^{-\bullet}\).
\end{itemize}
\restoregeometry
\begin{itemize}
    \item Let \(\partial\) be the product of suitable schemes with \((\bP^1)^{-\bullet}\backslash \square^{-\bullet}\).
    \item Let \(\Gamma\) be the global section. If it has a subscript, then that means the support.
    \item Put \(d^+\coloneqq d + 1\).
    \item We omit the notation of taking the topological space of the \(\bC\)-valued points of schemes.
    \item Quasi-isomorphisms are meant by \(\simeq\).
    \item We do not have to think about \(\pi_{*} \colon R\pi_{*}\bZ(d + 1)_{X/S}^\cD[2d + 2] \to \bZ(1)_{S}^{\cD}[2]\) because this is isomorphic after applying \(\tau_{\geqq -1}\).
\end{itemize}
These form morphisms of complexes of sheaves for varying \(U\), too.
Namely, there is a commutative diagram, again with no homotopy, of the following form, where \(\pr\) are some base changes of \(\bP_\bC\to \Spec \bC\):
\begin{equation} \label{BigNoHtpy}
{\fontsize{7truept}{7truept}\selectfont
\begin{tikzcd}[cramped, column sep=small]
    \epsilon^{-1}\pi_*z_{c, \{Z\}}^m(*_X, -\bullet) \arrow[d] \arrow[r]                                 & * \arrow[d] & * \arrow[l, "\simeq"] \arrow[r] \arrow[d] & \frac{\pi_*\pr_*I_{-\bullet}(m)_{X}}{\mathrm{degn}}[2m] \arrow[d]                       & \pi_*I_0(m)_{X}[2m] \arrow[l, "\simeq"] \arrow[r] \arrow[d] & \pi_*I_0(m)_{X/S}[2m] \arrow[d]\\
    \epsilon^{-1}\Delta_S^{-1}\pi^2_*z_{c, \{\Delta X\}}^{d + 1}(*_{X^2}, -\bullet) \arrow[r] \arrow[d] & * \arrow[d] & * \arrow[l, "\simeq"] \arrow[r] \arrow[d] & \Delta_S^{-1}\frac{\pi^2_*\pr_*I_{-\bullet}(d + 1)_{X^2}}{\mathrm{degn}}[2d^+] \arrow[d] & \Delta_S^{-1}\pi^2_*I_0(d + 1)_{X^2}[2d^+] \arrow[l, "\simeq"] \arrow[r] \arrow[d] & \Delta_S^{-1}\pi^2_*I_0(d + 1)_{X^2/S^2}[2d^+] \arrow[d]\\
    \epsilon^{-1}\pi_* z_c^{d + 1}(*_X, -\bullet)  \arrow[r]                         & *  & * \arrow[l, "\simeq"] \arrow[r]  & \frac{\pi_*\pr_*I_{-\bullet}(d + 1)_{X}}{\mathrm{degn}}[2d^+]  & \pi_*I_0(d + 1)_{X}[2d^+] \arrow[l, "\simeq"] \arrow[r]  & \pi_*I_0(d + 1)_{X/S}[2d^+]. \\
\end{tikzcd}}
\end{equation}
By \cref{lem:NoHtpy}, we have some commutative diagram
\begin{align} \label{biextZVSbiextDZ}
\begin{CD}
\epsilon^{-1}\pi_*z_{c, \{Z\}}^m(*_X, -\bullet) @>>> * @<{\simeq}<< \pi_*I_0(m)_{X/S}[2m] \\
@VVV @VVV @VVV \\
\epsilon^{-1}\Delta_S^{-1}\pi^2_*z_{c, \{\Delta X\}}^{d + 1}(*_{X^2}, -\bullet) @>>> * @<{\simeq}<< \Delta_S^{-1}\pi^2_*I_0(d + 1)_{X^2/S^2}[2d + 2] \\
@VVV @VVV @VVV \\
\epsilon^{-1}\pi_* z_c^{d + 1}(*_X, -\bullet) @>>> * @<{\simeq}<< \pi_*I_0(d + 1)_{X/S}[2d + 2]
\end{CD}
\end{align}
of complexes, not up to homotopy, with \(\simeq\) quasi-isomorphisms.
By taking the mapping cone of the left and middle columns of this diagram, we obtain an isomorphism of the analytification of the fiber \(\bE_Z\coloneqq \bE_{[Z]}\) and the fiber of the biextension \cref{biextD} at \([Z]_\cD\) as a section of \(\Ker (R^{2n}\pi_*\bZ(n)^\cD_{X/S}\to R^{2n}\pi_*\bZ(n)_{X(\bC)})\).
We have sections \(\langle W, Z\rangle\) of \(\bE_Z\) for each \(W\).
Let \(\langle W, Z\rangle_\cD\) be the image of these in the fiber of the biextension \cref{biextD}.
Note the asymmetry of \(W, Z\) here.
We do not pursue the symmetry of these sections further.

We prefer the following description of \(\langle W, Z\rangle_\cD\).
Let \(i_W, i_Z\) be as in \cref{sec:Arch}.
Since \([Z]_\cD\) can be seen as a global section of \(R^{2n}\pi_*i_{Z, *}Ri_Z^! \bZ(n)_{X/S}^\cD\), the restriction of \cref{biextDZ} to \(R\pi_*i_{W, *}Ri_W^! \bZ(m)_{X/S}^\cD[2m]\) vanishes.
Therefore, we have
\begin{align} \label{CycCl<WZ>D}
    R^{2m} \pi_*i_{W, *}Ri_W^! \bZ(m)_{X/S}^\cD\to H^{-1}(C_\cD).
\end{align}
We want to show that this carries \([W]_\cD\) as an element of \(H^{2m}_{|W|(\bC)}(X(\bC), \bZ(m)_{X/S}^\cD)\) to \(\langle W, Z\rangle_\cD\).

As before, we need to kill the unnecessary homotopy for our purpose.
The diagram
\[
{\fontsize{7truept}{7truept}\selectfont
    \begin{tikzcd}[cramped, column sep=small]
        \mathbb{Z} \arrow[d] \arrow[r, "\text{[}W\text{]}_{\mathcal{D}}"] & H^{2m}_{|W|}(\pi^{-1}(U), \bZ(m)_{X}^\cD) \arrow[d]                                                                                                                         & \tau_{\leqq 2m}\Gamma_{|W|}(\pi^{-1}(U), I_0(m)_{X})[2m] \arrow[d] \arrow[l, "\simeq"]\\
        z_{c, \{Z\cap \pi^{-1}(U)\}}^m(\pi^{-1}(U), -\bullet)  \arrow[r] & \displaystyle\varinjlim_{\sS_{\pi^{-1}(U), \{Z\cap \pi^{-1}(U)\}}^{m, -\bullet}} \frac{H^{2m}_{\overline{W_0}}\left(\bP_{\pi^{-1}(U)}, \bZ(m)^\cD_{(\bP_X, \partial)}\right)}{\mathrm{degn}} & \displaystyle\varinjlim_{\sS_{U, \{Z\cap \pi^{-1}(U)\}}^{m, -\bullet}} \frac{\tau_{\leqq 2m}\Gamma_{\overline{W_0}}\left(\bP_{\pi^{-1}(U)}, I_{-\bullet}(m)_{X}\right)}{\mathrm{degn}}[2m] \arrow[l, "\simeq"]
    \end{tikzcd}}
\]
\[
{\fontsize{7truept}{7truept}\selectfont
    \begin{tikzcd}[cramped, column sep=tiny]
        \tau_{\leqq 2m}\Gamma_{|W|}(\pi^{-1}(U), I_0(m)_{X})[2m] \arrow[d] \arrow[r]                                                                                                        & \Gamma_{|W|}(\pi^{-1}(U), I_0(m)_{X})[2m] \arrow[d] & \Gamma_{|W|}(\pi^{-1}(U), I_0(m)_{X})[2m] \arrow[d] \arrow[l, equal] \arrow[r] & \Gamma_{|W|}(\pi^{-1}(U), I_0(m)_{X/S})[2m] \arrow[d]\\
        \displaystyle\varinjlim_{\sS_{U, \{Z\cap \pi^{-1}(U)\}}^{m, -\bullet}} \frac{\tau_{\leqq 2m}\Gamma_{\overline{W_0}}\left(\bP_{\pi^{-1}(U)}, I_{-\bullet}(m)_{X}\right)}{\mathrm{degn}}[2m] \arrow[r] & \frac{I_{-\bullet}(m)_{X}(\bP_{\pi^{-1}(U)})}{\mathrm{degn}}[2m]  & I_0(m)_{X}(\pi^{-1}(U))[2m] \arrow[l, "\simeq"] \arrow[r]  & I_0(m)_{X/S}(\pi^{-1}(U))[2m]
    \end{tikzcd}}
\]
in the category of complexes, not just the homotopy category, is commutative for each open subset \(U\subseteq S\) and with notations as before.
This diagram in turn gives
\[
\begin{tikzcd}[cramped, column sep=tiny]
    \bZ_{X(\bC)} \arrow[d] \arrow[r, "\text{[}W\text{]}_{\mathcal{D}}"] & R^{2m}\pi_*i_{W, *}Ri_W^! \bZ(m)_{X}^\cD \arrow[d] & \tau_{\leqq 2m}\pi_*i_{W, *}i_W^! I_0(m)_{X}[2m] \arrow[d] \arrow[l, "\simeq"] \arrow[r] & \pi_*i_{W, *}i_W^! I_0(m)_{X}[2m] \arrow[d] \\
    \epsilon^{-1}\pi_*z_{c, \{Z\}}^m(*_X, -\bullet)  \arrow[r] & *                                                    & * \arrow[l, "\simeq"] \arrow[r]                                                          & \frac{\pi_*\pr_*I_{-\bullet}(m)_{X}}{\mathrm{degn}}[2m]
\end{tikzcd}
\]
\[
    \begin{tikzcd}
        \pi_*i_{W, *}i_W^! I_0(m)_{X}[2m] \arrow[d]             & \pi_*i_{W, *}i_W^! I_0(m)_{X}[2m] \arrow[l, equal] \arrow[d] \arrow[r] & \pi_*i_{W, *}i_W^! I_0(m)_{X/S}[2m] \arrow[d] \\
        \frac{\pi_*\pr_*I_{-\bullet}(m)_{X}}{\mathrm{degn}}[2m] & \pi_*I_0(m)_{X}[2m] \arrow[l, "\simeq"] \arrow[r]                     & \pi_*I_0(m)_{X/S}[2m],
    \end{tikzcd}
\]
commutative morphisms of complexes of sheaves, again without allowing homotopy.
Since all the vertical morphisms here vanish when composed with the corresponding three vertical morphisms in \cref{BigNoHtpy}, we can enhance \cref{biextZVSbiextDZ} to a commutative diagram
\[
\begin{tikzcd}[cramped, column sep=tiny]
\bZ_{X(\bC)} \arrow[dddd] \arrow[rd, "\text{[}W\text{]}_{\cD}"] \arrow[rr] & & * \arrow[d] \arrow[dddd, bend left] & & \pi_*i_{W, *}i_W^! I_0(m)_{X/S}[2m] \arrow[ll, "\simeq"'] \arrow[dddd] \arrow[ld] \\
 & \epsilon^{-1}\pi_* z_{c,\{Z\}}^{m}(*_X,-\bullet)
    \arrow[r]
    \arrow[d] &
* \arrow[d] &
\pi_* I_0(m)_{X/S}[2m]
    \arrow[l, "\simeq"']
    \arrow[d] & \\
 & \epsilon^{-1}\Delta_S^{-1}\pi^2_* z_{c,\{\Delta X\}}^{d+1}(*_{X^2},-\bullet)
    \arrow[r]
    \arrow[d] &
* \arrow[d] &
\Delta_S^{-1}\pi^2_* I_0(d+1)_{X^2/S^2}[2d+2]
    \arrow[l, "\simeq"']
    \arrow[d] & \\
 & \epsilon^{-1}\pi_* z_{c}^{d+1}(*_X,-\bullet)
    \arrow[r] &
* &
\pi_* I_0(d+1)_{X/S}[2d+2]
    \arrow[l, "\simeq"'] & \\
0 \arrow[ru] \arrow[rr] & & 0 \arrow[u] & & 0 \arrow[lu] \arrow[ll, "\simeq"]
\end{tikzcd}
\]
of complexes, with no allowance for homotopy.
Taking the \(-1\)-st cohomology of the mapping cone of the leftmost and the second most left columns, respectively consisting of one and two morphisms, we obtain \(\bZ_{X(\bC)}\to \bE_Z\), which maps \(1\) to \(\langle W, Z\rangle\).
Doing the same for the right two columns, we recover \cref{CycCl<WZ>D} for certain choices of homotopy.
Our \cref{CycCl<WZ>D} gives \(\langle W, Z\rangle_\cD\) by looking at the middle column.

\begin{prop} \label{prop:<WZ>DVSBWZ}
    Suppose that \(Z\) intersects properly with each fiber of \(\pi\).
    There exists an isomorphism of the fibers of the biextensions \cref{biextD} and \(\cB(R^{2m-1}\pi_*\bZ(m)_{X(\bC)}/\mathrm{tors})\) over \(E_{-Z}\) that does not preserve \(\mathbb{G}_{\mathrm{m}, S}^{\an}\) in those fibers but does up to sign and that carries \(\langle W, Z\rangle_\cD\) to \(B_{W, Z}^{\circ}\) for each \(W\) intersecting properly with the fibers of \(\pi\).
\end{prop}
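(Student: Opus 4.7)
The plan is to realize both sides as $H^{-1}$ of cones in parallel derived diagrams built from matching distinguished triangles, then to identify the distinguished sections via an Esnault--Viehweg style diagram chase aided by \cref{lem:3x3}.

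First, I would give a cleaner cohomological description of $\langle W,Z\rangle_{\mathcal{D}}$ via \cref{ex:HomTrivExSeq}. The fiber of \cref{biextD} at $E_{-Z}$ sits in a short exact sequence $0\to\bG_{\mathrm{m},S}^{\an}\to H^{-1}(C_{\mathcal{D}})\to R^{2m}\pi_*\bZ(m)_{X/S}^{\mathcal{D}}\to 0$, with $C_{\mathcal{D}}$ the cone of \cref{biextDZ}, and by the diagram after \cref{biextZVSbiextDZ} the section $\langle W,Z\rangle_{\mathcal{D}}$ is the pushforward of $[W]_{\mathcal{D}}$ via \cref{CycCl<WZ>D}. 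Applying \cref{ex:HomTrivExSeq} to the triangle $R\pi_*\bZ(m)_{X/S}^{\mathcal{D}}[2m]\to \bG_{\mathrm{m},S}^{\an}[1]\to C_{\mathcal{D}}\xrightarrow{+1}$ realizes this subextension as coming from the associated class in $Ext^1_{\pVMHS}(\bZ,R^{2m-1}\pi_*\bZ(m)/\mathrm{tors})$.

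Second, I would construct the Deligne/Hodge counterparts of the two triangles Hain uses in \cref{sec:Arch}. From the triangle $R\pi_*i_{Z,*}Ri_Z^!\bZ(n)_{X/S}^{\mathcal{D}}\to R\pi_*\bZ(n)_{X/S}^{\mathcal{D}}\to R\pi_{Z,*}\bZ(n)_{X/S}^{\mathcal{D}}\xrightarrow{+1}$ and Poincar\'e duality (using that $Z$ meets each fiber properly so the Betti classes of $Z$ vanish rationally on the relevant strata), the factorization $[Z]_{\mathcal{D}}\colon\bZ_{S(\bC)}\to R^{2n}\pi_*i_{Z,*}Ri_Z^!\bZ(n)_{X/S}^{\mathcal{D}}$ identifies the fiber of \cref{biextD} at $E_{-Z}$ with (the dual of) the extension $E_Z$ assembled in \cref{E_Z}, compatibly with the $\bG_{\mathrm{m},S}^{\an}$ subsheaf up to a sign from Poincar\'e duality and from \cref{Dual}. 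The second triangle $R\pi_*i_{W,*}Ri_W^!\bZ(m)^{\mathcal{D}}\to R\pi_{Z,!}\bZ(m)^{\mathcal{D}}\to R\pi_{Z,!}Rj_*\bZ(m)^{\mathcal{D}}\xrightarrow{+1}$ furnishes the Deligne analog of $B_W(Z)$, with $[W]_{\mathcal{D}}$ taking the role of the pullback $-[W]_{\sing}$.

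Third, I would apply \cref{lem:3x3} to the two nested cones on each side: on the Deligne side, the cone of the composite $R\pi_*\bZ(m)_{X/S}^{\mathcal{D}}[2m]\xrightarrow{[Z]_{\mathcal{D}}}R\pi_*\bZ(d+1)_{X/S}^{\mathcal{D}}[2d+2]\xrightarrow{\pi_*}\bG_{\mathrm{m},S}^{\an}[1]$; on the Hain side, the pushout that produces $B_{W,Z}$ out of $B_W(Z)$ via \cref{2dual}. The lemma produces an explicit isomorphism between the two $H^{-1}$'s, and it sends $[W]_{\mathcal{D}}$ to the Hain class of $B_{W,Z}$ by the same formal pattern as \cite[Theorem 7.11]{EVDB}, where singular coefficients are replaced by the Hodge theoretic ones. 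The sign change $B_{W,Z}\rightsquigarrow B_{W,Z}^{\circ}=B_{-W,-Z}$ of \cref{rem:sign} accounts for the sign ambiguity in the identification of $\bG_{\mathrm{m},S}^{\an}$.

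The main obstacle will be the third step: organizing the nested cones on the Deligne side so that the squares entering \cref{lem:3x3} commute strictly, not merely up to homotopy. This is exactly the reason for the Godement-resolution calculations and the commutative diagrams without homotopy (such as \cref{BigNoHtpy}) assembled earlier in this section; with those in place the comparison becomes a bookkeeping of signs coming from \cref{D2BdR}, Poincar\'e duality, and the sign in the blended-extension structure put on $B_{W,Z}$ after \cref{2dual}. Once the signs are tracked, the identification of intermediate Jacobians with $Ext^1_{\pVMHS}(\bZ,\cdot)$ made in \cref{sec:pVMHS} lets us conclude that the isomorphism carries $\langle W,Z\rangle_{\mathcal{D}}$ to $B_{W,Z}^{\circ}$.
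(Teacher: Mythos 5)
Your proposal follows essentially the same route as the paper: realize both sides as cohomology of parallel cone diagrams over Godement resolutions to kill homotopy ambiguity, apply \cref{lem:3x3} to organize the nested cones, identify the intermediate Jacobians via $Ext^1_{\pVMHS}$, and then run the \cite[Theorem~7.11]{EVDB}-style diagram chase. You correctly identify that the strict commutativity needed for \cref{lem:3x3} is what the Godement calculations in this section are for, and the two Hain triangles you set up match the $3\times 3$ arrangement of $C_0, C_{\bZ,F}, C_{\cO}, C_{\bZ,F,*}, C_{\cO,*}$ that the paper uses.

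One inaccuracy in the bookkeeping: you attribute the sign twist in the identification of $\bG_{\mathrm{m},S}^{\an}$ to the replacement $B_{W,Z}\rightsquigarrow B_{W,Z}^{\circ}=B_{-W,-Z}$ coming from \cref{rem:sign}. These are actually two independent sign phenomena. Negating both $W$ and $Z$ is covered by the $(-\id)\times(-\id)$ symmetry of the biextension and does not change the $\bG_{\mathrm{m},S}^{\an}$ torsor structure; the passage to $B_{W,Z}^{\circ}$ is there purely to correct the sign convention in the Abel--Jacobi map (so that $E_{-W}$, not $E_W$, is the image of $W$, per \cref{rem:sign}). The sign on $\bG_{\mathrm{m},S}^{\an}$ in the proposition has a different source: the paper establishes it at the end of the proof by showing that a certain square \emph{anticommutes}, an anticommutativity traced to the $-1$ inserted in the top row of \cref{EVDB7.11} (which in turn comes from the mapping-fiber convention for Deligne cohomology, \cref{D2BdR}). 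Separating these two sign issues is important, since the proposition explicitly asserts the $\bG_{\mathrm{m},S}^{\an}$ twist, so its source must be pinned down independently of the $\circ$ normalization.
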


\begin{proof}
We mimic the proof of \cite[Theorem 7.11]{EVDB}.
However, we first write down everything in terms of Godement resolutions to avoid the ambiguity of homotopy.
Set \(j_W\colon X\backslash |W|\to X\) to be the open immersion.
Also, put \(\pi_W\coloneqq \pi\circ j_W\).
Besides \(I_0(p)_{X}, I_0(p)_{X/S}\) for an integer \(p\), we take the Godement resolutions
\[
    \bZ(p)_{X(\bC)}\to I_\bZ(p)_{X}, \quad \sigma_{\geqq p}\Omega_{X(\bC)/S(\bC)}\to I_F(p)_{X/S}, \quad \pi^{-1}\cO_S\to I_{\cO, X/S}.
\]
Let \(C_0, C_{\bZ, F}, C_\cO, C_{\bZ, F, *}, C_{\cO, *}\) be the cones of
\begin{align}
    &\pi_*I_0(m)_{X/S}[2m]\to \pi_*I_0(d + 1)_{X/S}[2d + 2], \nonumber \\
    &\pi_*I_\bZ(m)_{X}[2m]\oplus \pi_*I_F(m)_{X/S}[2m]\to \pi_*I_\bZ(d + 1)_{X}[2d + 2], \label{C_ZF} \\
    &\pi_*I_{\cO, X/S}[2m]\to \pi_*I_{\cO, X/S}[2d + 2], \nonumber \\
    &\pi_{W, *}j_W^{-1}I_\bZ(m)_X[2m]\oplus \pi_{W, *}j_W^{-1}I_F(m)_{X/S}[2m]\to \pi_*I_\bZ(d + 1)_{X}[2d + 2], \label{C_ZFj_W}\\
    &\pi_{W, *}j_W^{-1}I_{\cO, X/S}[2m]\to \pi_*I_{\cO, X/S}[2d + 2], \nonumber
\end{align}
defined by multiplying the lifts in \cref{LiftDel} of \([Z]_\cD\) and \([Z]_\sing \in H^{2n}(X(\bC), \bC)\) to
\[
    (\tau_{\leqq 2n}\Gamma_{|Z|(\bC)}(X(\bC), I_0(n)_{X}))^{2n}, \quad (\tau_{\leqq 2n}\Gamma_{|Z|(\bC)}(X(\bC), I_{\cO, X/S}))^{2n}.
\]
There are short exact sequences
\[
    \begin{tikzcd}[cramped, column sep=tiny]
                & 0                             & 0                       & 0                              &   \\
    0 \arrow[r] & C_{\cO,*} \arrow[u] \arrow[r] & {*} \arrow[r] \arrow[u] & C_{\bZ, F, *}[1] \arrow[r] \arrow[u] & 0 \\
    0 \arrow[r] & C_{\cO} \arrow[r] \arrow[u] & C_{0}[1] \arrow[r] \arrow[u] & C_{\bZ, F}[1] \arrow[r] \arrow[u] & 0 \\
    0 \arrow[r] & \pi_* i_{W,*} i_W^! I_{\cO, X/S}[2m+1] \arrow[r] \arrow[u] &
    \pi_* i_{W,*} i_W^! I_{0}(m)_{X/S}[2m+2] \arrow[r] \arrow[u] &
    \begin{matrix}
    \pi_* i_{W,*} i_W^! I_{\bZ}(m)_X[2m+2]\\
    \oplus \pi_* i_{W,*} i_W^! I_{F}(m)_{X/S}[2m+2]
    \end{matrix}
    \arrow[r] \arrow[u] & 0 \\
        & 0 \arrow[u] & 0 \arrow[u] & 0, \arrow[u] &
    \end{tikzcd}
\]
where all the vertical morphisms are by the functoriality, inclusion, projection, or \cref{lem:3x3}, and where all the squares are commutative without homotopy except the right bottom one, which is anti-commutative due to the same lemma.
This induces the commutative diagram
\begin{equation} \label{EVDB7.11}
    \begin{tikzcd}[cramped, column sep=tiny]
    0 \arrow[r] &
    \displaystyle\frac{H^{-2}(C_\cO)}{H^{-2}(C_{\bZ, F})} \arrow[r, "-1"] &
    H^{-1}(C_0) \arrow[r] &
    H^{-1}(C_{\bZ, F}) \arrow[r] &
    H^{-1}(C_\cO) \\
    &
    0 \arrow[u] \arrow[r] &
    H^{2m}(\pi_* i_{W,*} i_W^! I_{0}(m)_{X/S}) \arrow[u] \arrow[r, "-1"] &
    \begin{matrix}
    H^{2m}(\pi_* i_{W,*} i_W^! I_\bZ(m)_X) \oplus \\
    H^{2m}(\pi_* i_{W,*} i_W^! I_F(m)_{X/S})
    \end{matrix}
    \arrow[u] \arrow[r] &
    H^{2m}(\pi_* i_{W,*} i_W^! I_{\cO, X/S}) \arrow[u] \\
    & & &
    H^{-2}(C_{\bZ, F, *}) \arrow[u] \arrow[r] &
    H^{-2}(C_{\cO,*}) \arrow[u] \\
     & & & & H^{-2}(C_\cO) \arrow[u]
    \end{tikzcd}
\end{equation}
with exact rows and columns, which we explain as below.
\begin{itemize}
    \item The symbol \(-1\) indicates that the morphism is that number times the one induced from the morphism in the previous diagram.
        We explain why we need \(-1\) in the top row later.
        The morphism with \(-1\) in the second row is the correct morphism from the Deligne cohomology to the Betti cohomology and the filtration of the de Rham cohomology because of the definition of the Deligne cohomology using mapping fiber and the convention regarding distinguished triangles.
        Multiplying \(-1\) here also switches the anticommutativity to the commutativity.
    \item The rightmost horizontal morphisms are induced by the functoriality rather than being the negatives of them.
        The latter would be induced by the triangles obtained out of the horizontal short exact sequences.
        Rather than that, for those horizontal morphisms in the diagram, we are following the convention of the snake lemma.
        Because of this, we need \(-1\) in the top row so that the signs in that row is induced from some distinguished triangle.
    \item The vertical morphisms from cohomology of the cones with \({}_*\) in their notations do follow the signs of the triangles induced by vertical short exact sequences, and are different from the signs in the proof of the snake lemma.
\end{itemize}

We have \(H^{-1}(C_{\bZ, F})\simeq R^{2m}\pi_*\bZ(m)_{X(\bC)}\oplus F^mR^{2m}\pi_*\pi^{-1}\cO_{S(\bC)}\).
The kernel of
\[
    H^{-1}(C_0) \to H^{-1}(C_{\bZ, F})
\]
in \cref{EVDB7.11} is the fiber of \cref{biextD} over \(E_{-Z}\).

\begin{clm} \label{clm:Jac(EZDual)}
There exists an isomorphism \(H^{-2}(C_\cO)/H^{-2}(C_{\bZ, F})\simeq Ext^1_{\pVMHS}(\bZ, E_{Z}^\vee)\).
\end{clm}

\begin{proof}
The dual of \cref{E_Z} gives commuting exact sequences
\begin{equation} \label{E_ZDual}
    \begin{tikzcd}[cramped, column sep=tiny]
    0 &
    {R^{2m-1}\pi_*\bZ(m)_{X(\bC)}} \arrow[l] \arrow[d] &
    {R^{2m-1}\pi_{Z,!}\bZ(m)_{(X\backslash |Z|)(\bC)}} \arrow[l] \arrow[d] &
    {R^{2m-2}\pi_* i_{Z,*}\bZ(m)_{|Z|(\bC)}} \arrow[l,"+"] \arrow[d,"{\cdot [Z]_{\sing}}"] &
    {R^{2m-2}\pi_*\bZ(m)_{X(\bC)}} \arrow[l] \arrow[d] \\
    0 &
    \displaystyle\frac{R^{2m-1}\pi_*\bZ(m)_{X(\bC)}}{\mathrm{tors}} \arrow[l] &
    {E_{Z}^\vee} \arrow[l] &
    {\bZ(1)_{S(\bC)}} \arrow[l] &
    0, \arrow[l]
    \end{tikzcd}
\end{equation}
where the morphism labeled by \(+\) does follow the usual convention of the snake lemma as we now explain.
This sign change as well as the multiplication by \([Z]_{\sing}\) rather than \(-[Z]_{\sing}\) happens because
\begin{align*}    
    &R^{2n-1}\pi_{Z, *}\bZ \times R^{2m-2}\pi_*i_{Z, *}i_Z^*\bZ (m) \to R^{2n-1}\pi_{Z, *}\bZ (n)\times R^{2m-1}\pi_{Z, !}\bZ (m) \\
    \to &R^{2d}\pi_{Z, !} \bZ(d + 1) \to \bZ(1)_{S(\bC)}
\end{align*}
is \(-1\) times
\begin{align*}
    &R^{2n-1}\pi_{Z, *}\bZ \times R^{2m-2}\pi_*i_{Z, *}i_Z^*\bZ (m) \to R^{2n}\pi_*i_{Z, *}Ri_Z^!\bZ (n) \times R^{2m-2}\pi_*i_{Z, *}\bZ (m) \\
    \to &R^{2d}\pi_*i_{Z, *}Ri_Z^!\bZ(d + 1) \to \bZ(1)_{S(\bC)}.
\end{align*}
Note the morphism
\[
    \begin{CD}
        \pi_* I_\cO(m)_{X/S}[2m] @>{\cref{C_ZF}}>> \pi_*I_\cO(d + 1)_{X/S}[2d + 2] @>>> C_\cO @>{+1}>> \\
        @|                                      @AAA                                @AAA \\
        \pi_*I_\cO(m)_{X/S}[2m] @>>> \pi_*i_{Z, *}i_Z^*I_\cO(m)_{X/S}[2m] @>{+}>> \pi_*j_{Z, !}j_Z^*I_\cO(m)_{X/S}[2m + 1] @>{+1}>>
    \end{CD}
\]
of triangles in the derived category, with the following explanations.
\begin{itemize}
    \item The morphism labeled \(+\) induces the morphism in the usual proof of the snake lemma.
    \item The rightmost vertical morphism is induced by the inclusion of the source to
    \[
        \pi_*I_\cO(m)_{X/S}[2m+1]
    \]
    and \(0\colon \pi_*j_{Z, !}j_Z^*I_\cO(m)_{X/S}[2m + 1] \to \pi_*I_\cO(d + 1)_{X/S}[2d + 2]\).
\end{itemize}
By \cref{E_ZDual}, the rightmost vertical morphism in the above diagram induces
\begin{align} \label{H-2C_O}
E_{Z, \cO}^\vee\xrightarrow{\simeq}H^{-2}(C_\cO).
\end{align}
We can argue similarly about the integral part of \(H^{-2}(C_{\bZ, F})\).
For the other part taking care of Hodge filtration, we need more argument since a priori these parts of the cohomology of cones do not come from a variation of mixed Hodge structures.
The part of \(H^{-2}(C_{\bZ, F})\) that we need is \(F^mR^{2m-1}\pi_*\pi^{-1}\cO_{S(\bC)}\) by definition.
We can compare this with the exact sequence obtained as the Hodge filtration
\[
    0\leftarrow F^mR^{2m-1}\pi_*\pi^{-1}\cO_{S(\bC)} \leftarrow F^0E_{Z, \cO}^\vee \leftarrow 0 \leftarrow 0
\]
of the second row of \cref{E_ZDual}.
Combining these considerations including \cref{H-2C_O}, we win by \cite[\S C.3]{GZFam}.
\end{proof}

Our \(E_Z^\vee\) corresponds to \(E_{-Z}\) via \cref{Dual}.
Consider commuting exact sequences
\begin{equation} \label{B_W^d(Z)}
    \begin{tikzcd}[cramped, column sep=small]
        0 \arrow[r] &
        R^{2m-1}\pi_{Z, !}\bZ(m) \arrow[r] \arrow[d] &
        R^{2m-1}\pi_{Z, !}Rj_*\bZ(m) \arrow[r, "{-1}"] \arrow[d] &
        R^{2m}\pi_* i_{W, *}Ri_W^!\bZ(m) \arrow[r] \arrow[d, equal] &
        R^{2m}\pi_{Z, !} \bZ(m) \arrow[d, equal] \\
        0 \arrow[r] &
        E_{Z}^\vee \arrow[r] \arrow[d, equal] &
        B_Z^\mathrm{d}(W) \arrow[r] &
        R^{2m}\pi_* i_{W, *}Ri_W^!\bZ(m) \arrow[r] &
        R^{2m}\pi_{Z, !} \bZ(m) \\
        0 \arrow[r] &
        E_{Z}^\vee \arrow[r] &
        B_{W, Z}^\circ \arrow[r] \arrow[u] &
        \bZ_{S(\bC)} \arrow[r] \arrow[u, "{[W]_{\sing}}"] &
        0, \arrow[u]
    \end{tikzcd}
\end{equation}
where we need the explanations below.
\begin{itemize}
    \item We introduce \(B_Z^\mathrm{d}(W)\), which may be related with the dual of \(B_Z(W)\).
        However, we do not discuss this unnecessary point.
    \item We recall that \(j\colon X\backslash (|W|\cup |Z|)\to X\backslash |Z|\) has been defined.
\end{itemize}

To copy the proof of \cite[Theorem 7.11]{EVDB}, we need another claim for
\[
    H^{-2}(C_{\cO, *}), \quad H^{-2}(C_{\bZ, F, *}).
\]
Let \(C_{\bZ, *}\) be the cone of
\[
    \pi_{W, *}j_W^{-1}I_\bZ(m)_X[2m]\to \pi_*I_\bZ(d + 1)_{X}[2d + 2].
\]
We have \(H^{-2}(C_{\bZ, F, *})\simeq H^{-2}(C_{\bZ, *})\oplus R^{2m - 1}\pi_{W, *}\sigma_{\geqq m}\Omega_{(X\backslash |W|)(\bC)/S(\bC)}\), where the last term is the cohomology of the term in \cref{C_ZFj_W} not in the definition of \(C_{\bZ, *}\).

\begin{clm} \label{clm:H-2C_*}
    \begin{enumerate}
        \item\label{H-2C_O*} There is an isomorphism \(H^{-2}(C_{\cO, *})\xleftarrow{\simeq} B_Z^\mathrm{d}(W)_\cO\), and
        \item\label{H-2C_Z*} There is a surjection \(H^{-2}(C_{\bZ, *})\twoheadrightarrow B_Z^\mathrm{d}(W)\) with torsion kernel.
        \item\label{H-2C_ZF*remain} There is a morphism \(R^{2m - 1}\pi_{W, *}\sigma_{\geqq m}\Omega_{(X\backslash |W|)(\bC)/S(\bC)}\leftarrow F^mB_Z^\mathrm{d}(W)_{\cO}\) such that through this and the part
                            \begin{align} \label{EVDB7.11_VertHodgeLift}
                                R^{2m - 1}\pi_{W, *}\sigma_{\geqq m}\Omega_{(X\backslash |W|)(\bC)/S(\bC)}\to R^{2m}\pi_*i_{W, *}Ri_W^!\sigma_{\geqq m}\Omega_{X(\bC)/S(\bC)}
                            \end{align}
                            of a vertical morphism in \cref{EVDB7.11}, the de Rham cycle class of \(W\) seen as the section of the target of \cref{EVDB7.11_VertHodgeLift} lifts to a section of \(F^mB_Z^\mathrm{d}(W)_{s, \bC}\) at each \(s\in S(\bC)\).
    \end{enumerate}
    Morphisms in \cref{H-2C_O*,H-2C_Z*,H-2C_ZF*remain} are compatible with each other.
    The isomorphism in \ref{H-2C_O*} is compatible with \cref{H-2C_O}, \(H^{-2}(C_\cO)\to H^{-2}(C_{\cO, *})\) in \cref{EVDB7.11} and \(E_{Z}^\vee\to B_Z^\mathrm{d}(W)\) in \cref{B_W^d(Z)}.
    The morphisms in \cref{H-2C_Z*,H-2C_ZF*remain} is compatible with
    \[
        H^{-2}(C_{\bZ, F, *})\to R^{2m}\pi_*i_{W, *}Ri_W^!\bZ(m)_{X(\bC)}\oplus F^m R^{2m}\pi_*i_{W, *}Ri_W^!\pi^{-1}\cO_{S(\bC)}
    \]
    in \cref{EVDB7.11}, and \(B_Z^\mathrm{d}(W)\to R^{2m}\pi_*i_{W, *}Ri_W^!\bZ(m)_{X(\bC)}\) in \cref{B_W^d(Z)}.
\end{clm}

\begin{proof}
    Define \(j'\colon X\backslash (|W|\cup |Z|)\to X\backslash |W|\).
    We have the isomorphism \(j_{Z, !}Rj_*j^*j_Z^*\simeq Rj_{W, *}j'_!j'^*j_W^*\) by using the snake lemma to
    \[
        \begin{CD}
            0 @>>> i_{W, *}i_W^! @>>> \id @>>> j_{W, *}j_W^* @. \\
            @.      @VVV            @|         @VVV         @. \\
            0 @>>> j_{Z, !}j_Z^* @>>> \id @>>> i_{Z, *}i_Z^* @>>> 0,
        \end{CD}
    \]
    where the top rightmost horizontal arrow is surjective when applied to injective sheaves.
    The right side of the isomorphism fits in commuting short exact sequences
    \[
        \begin{CD}
            0 @>>> j_{Z, !}j_Z^*        @>>> \id @>>> i_{Z, *}i_Z^* @>>> 0 \\
            @.      @VVV                    @VVV                @|         @. \\
            0 @>>> j_{W, *}j'_!j'^*j_W^* @>>> j_{W, *}j_W^* @>>> i_{Z, *}i_Z^* @>>> 0.
        \end{CD}
    \]
    These considerations and \cref{E_ZDual} describe \(B_Z^\mathrm{d}(W)\) as a quotient of \(B_Z^\mathrm{d}(W)'\) by its torsion subsheaf, where the new notation is as in the pushout
    \begin{equation} \label{B_Z^d(W)'}
        {\fontsize{7truept}{7truept}\selectfont
        \begin{tikzcd}[cramped, column sep=tiny]
        {R^{2m-2}\pi_{W,*}\bZ(m)_{X\backslash |W|}}
            \arrow[r]
            \arrow[d]
        &
        {R^{2m-2}\pi_* i_{Z,*}\bZ(m)_{|Z|}}
            \arrow[r, "{+}"]
            \arrow[d, "{\cdot [Z]_{\sing}}"]
        &
        {R^{2m-1}\pi_{W,*} j'_!\bZ(m)_{X\backslash (|W|\cup |Z|)}}
            \arrow[r]
            \arrow[d]
        &
        {R^{2m-1}\pi_{W,*}\bZ(m)_{X\backslash |W|}}
            \arrow[r]
            \arrow[d, equal]
        & 0 \\
        0
            \arrow[r]
        &
        {\bZ_{S}}
            \arrow[r]
        &
        {B_Z^\mathrm{d}(W)'}
            \arrow[r]
        &
        {R^{2m-1}\pi_{W,*}\bZ(m)_{X\backslash |W|}}
            \arrow[r]
        & 0
        \end{tikzcd}}
    \end{equation}
    of exact sequences, with the comments below.
    \begin{itemize}
        \item We omitted the notation of taking the set of \(\bC\)-points.
        \item The morphism labeled \(+\) is the one that appears in the usual proof of the snake lemma.
        \item The leftmost square commutes since \(R^{2m-2}\pi_{W, *}\bZ(m)_{(X\backslash |W|)(\bC)}\simeq R^{2m-2}\pi_*\bZ(m)_{X(\bC)}\) by purity of the Betti cohomology, with discussions around \cref{rem:PL?} for similar details.
    \end{itemize}
    Arguing similarly to \cref{clm:Jac(EZDual)} gives \cref{H-2C_O*,H-2C_Z*}.

    We need more arguments for \ref{H-2C_ZF*remain}.
    As in \cref{clm:Jac(EZDual)}, we take the short exact sequence that we get as the Hodge filtration
    \[
        0\to 0\to F^0B_Z^\mathrm{d}(W)_\cO \to F^mR^{2m - 1}\pi_{W, *}\pi_{W}^{-1}\cO_{S(\bC)} \to 0
    \]
    of the second row of \cref{B_Z^d(W)'}.
    The \(m\)-th Hodge filtration above does not equal
    \[
        R^{2m - 1}\pi_{W, *}\sigma_{\geqq m}\Omega_{(X\backslash |W|)(\bC)/S(\bC)},
    \]
    the part of the cohomology of \(C_{\bZ, F, *}\) that we intend to discuss.
    However, \cite[Proposition 6.10]{EVDB} says that, at each \(s\in S(\bC)\), the de Rham cycle class of \(W\) in \(H_{|W_s(\bC)|}^{2m}(X_s(\bC), \sigma_{\geqq m}\Omega_{X_s(\bC)})\) comes from \(F^mH^{2m - 1}((X\backslash |W|)_s(\bC), \bC)\).
\end{proof}

By arguing as in the proof of \cite[Theorem 7.11]{EVDB} with the help of \cite[\S C.3]{GZFam}, the section \(B^\circ_{W, Z}\) maps to \(\langle W, Z\rangle_\cD\) via \cref{EVDB7.11,clm:Jac(EZDual),clm:H-2C_*}.
It suffices to prove that
\[
    \begin{CD}
        H^{-2}(C_\cO)/H^{-2}(C_{\bZ, F}) @>{\cref{EVDB7.11}}>> H^{-1}(C_0) \\
        @A{\textup{\cref{clm:Jac(EZDual)}}}AA        @AAA \\
        Ext^1_{\pVMHS}(\bZ, E_{Z}^\vee) @. H^{2d + 1}(\pi_*I_0(d + 1)_{X/S}) \\
        @AAA                                @| \\
        Ext^1_{\pVMHS}(\bZ, \bZ(1)) @>{\textup{\cite[\S C.3]{GZFam}, \cite[(2.2)]{HainBiext}}}>> \cO_{S(\bC)}^\times
    \end{CD}
\]
anticommutes, where the right bottom vertical morphism is by seeing its source and target as a quotient of \(R^{2d}\pi_*\pi^{-1}\cO_{S(\bC)} = \cO_{S(\bC)}\).
This sign inconsistency comes from \(-1\) in the definition of \cref{EVDB7.11}.
\end{proof}

\begin{thm} \label{thm:ArchCompSm}
There exists an isomorphism of the biextension by \(\bG_{\mathrm{m}, S}^{\an}\) induced by \cref{clm:descent} and \(\cB(R^{2m-1}\pi_*\bZ_{X(\bC)}/\mathrm{tors})\) twisted by \(-1\colon \bG_{\mathrm{m}, S}^\an\to \bG_{\mathrm{m}, S}^\an\) that carries \(\langle W, Z\rangle\) to the image of \(B_{W, Z}^\circ\) for an open subset \(U\subseteq S\) and disjoint cycles \(W\in z^m_{\hom, \equi}(U_{X/S}, 0)\) and \(Z\in z^n_{\hom, \equi}(U_{X/S}, 0)\).
\end{thm}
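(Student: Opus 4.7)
The plan is to bootstrap \cref{thm:ArchCompSm} from the preceding fiberwise comparison between $\langle W, Z\rangle_\cD$ and $B_{W,Z}^\circ$, using the derived cycle class map of Binda--Saito to pass from Bloch's construction to Deligne cohomology. The first step is to invoke the commutative square between \cref{biext} and \cref{biextD} obtained immediately above via the compatibilities in \cref{prop:CubDelProd}, \cref{prop:CubDelPull}, \cref{prop:CubDelPush} and \cref{prop:CubDelGm}. Combined with \cite[Expos\'e VII, Corollaire 3.6.5]{SGA7}, this already yields an isomorphism between the isomorphism classes of the biextension obtained by analytifying the construction of \cref{clm:descent} and the biextension cut out by \cref{biextD}.

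Next, I would lift this isomorphism of classes to an actual isomorphism of biextensions by matching sections. \cref{prop:<WZ>DVSBWZ} compares the Deligne and Hain constructions fiberwise over $E_{-Z}$ for each $Z \in z^n_{\hom,\equi}(U_{X/S}, 0)$ intersecting the fibers of $\pi$ properly, identifying $\langle W, Z\rangle_\cD$ with $B_{W,Z}^\circ$ for every disjoint $W \in z^m_{\hom,\equi}(U_{X/S}, 0)$, and accounts for the sign twist by $-1\colon \bG_{\mathrm{m},S}^\an \to \bG_{\mathrm{m},S}^\an$. Composing with the derived regulator, the image of $\langle W, Z\rangle$ coincides with $B_{W,Z}^\circ$ under the resulting fiberwise identifications. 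By the moving lemma, every local section of $\bfA^m(X/S)\times \bfA^n(X/S)$ is realized by such a disjoint pair $(W, Z)$, so these matchings trivialize the biextensions compatibly and determine a candidate global isomorphism.

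The hard part will be the globalization: verifying that the pointwise identifications glue coherently and produce a genuine morphism of $\bG_{\mathrm{m},S}^\an$-biextensions rather than merely of the underlying torsors. The automorphism group of any biextension of $(A, B)$ by $C$ is $\Hom(A\otimes B, C)$ by \cite[Expos\'e VII, (2.5.4.1)]{SGA7}, so a candidate isomorphism is pinned down once the sections $\langle W, Z\rangle$ are specified on a dense enough collection of $(W, Z)$. To carry this out, I would reuse the descent pattern of \cref{clm:descent} on the analytic side. On Bloch's side the gluing data among rationally equivalent cycles is controlled by \cref{TwistedSection}; on the Hain side the analogous gluing for $B_{W,Z}^\circ$ follows from \cref{sec:Arch} together with the Abel--Jacobi description of the quotient extensions. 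Since the derived regulator intertwines the rational equivalences on the two sides, the two gluing data agree, producing an isomorphism of $\bG_{\mathrm{m},S}^\an$-biextensions that carries $\langle W, Z\rangle$ to the image of $B_{W,Z}^\circ$ as claimed.
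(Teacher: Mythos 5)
Your overall strategy matches the paper's: use the derived regulator to get the isomorphism class, then match sections via \cref{prop:<WZ>DVSBWZ}, and use the uniqueness of morphisms of biextensions to globalize. However, there is one spot where your argument skips a genuine subtlety. You write that ``the derived regulator intertwines the rational equivalences on the two sides, so the two gluing data agree.'' This works only in one variable. As the paper repeatedly flags, \(\langle W, Z\rangle_\cD\) is \emph{not} symmetric: it is constructed by first choosing the Deligne cycle class \([Z]_\cD\) and then tracking the class of \(W\) with supports, and \cref{prop:<WZ>DVSBWZ} compares the constructions \emph{only after fixing} \(Z\). So the comparison \(\langle W, Z\rangle \leftrightarrow \langle W, Z\rangle_\cD \leftrightarrow B^\circ_{W,Z}\), together with \cref{TwistedSection}, yields the transformation law \(B_{W, Z}^\circ = \sigma_Z(W - W')^{-1} B_{W', Z}^\circ\) for rationally equivalent \(W, W'\), but it does \emph{not} yield the analogous law when \(Z\) moves in its rational equivalence class. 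The paper has to import that second identity separately — it quotes Hain's own formula \cite[(3.3.4)]{HainBiext}. Your ``Abel--Jacobi description of the quotient extensions'' reference is pointing in the right direction but is not enough: the Abel--Jacobi interpretation describes the sub- and quotient extensions of \(B_{W,Z}^\circ\), not how the mixed-extension section itself transforms under changing \(Z\). Once this missing ingredient is supplied, bilinearity of both \(\langle W, Z\rangle\) and \(B^\circ_{W,Z}\), together with the fact that the \(\langle W, Z\rangle\) generate the biextension from \cref{clm:descent}, close the argument as you sketch.
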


\begin{proof}
    By the comparison of \(\langle W, Z\rangle\) and \(\langle W, Z\rangle_\cD\) as well as \cref{prop:<WZ>DVSBWZ}, it follows that for an open subset \(U\subseteq S\), a cycle \(Z\in z^n_{\hom, \equi}(U_{X/S}, 0)\) and \(W, W'\in z^m_{\hom, \equi}(U_{X/S}, 0)\) disjoint from \(Z\), we have
    \[
        B_{W, Z}^\circ = \sigma_Z(W - W')^{-1} B_{W', Z}^\circ.
    \]
    We have a similar formula for the second variable of \(B_{W, Z}^\circ\) by \cite[(3.3.4)]{HainBiext}.
    Namely,
    \[
        B_{W, Z}^\circ = \sigma_W(Z - Z')^{-1} B_{W, Z'}^\circ
    \]
    for an open subset \(U\subseteq S\), a cycle \(W\in z^m_{\hom, \equi}(U_{X/S}, 0)\) and \(Z, Z'\in z^n_{\hom, \equi}(U_{X/S}, 0)\) disjoint from \(W\).
    These and the fact that \(\langle W, Z \rangle\) and \(B_{W, Z}^\circ\) are bilinear with respect to \(W, Z\) show that there is a unique map \(\langle W, Z\rangle \mapsto B_{W, Z}^\circ\) of biextensions as in the statement since \(\langle W, Z\rangle\) generate the biextension in \cref{clm:descent}.
\end{proof}

\begin{rem}
    The proof here is a version of that of \cite[Proposition 3.1]{deJongGrossSchoen}.
    However, we pay attention to the well-definedness of \(\langle W, Z\rangle\mapsto B_{W, Z}^\circ\), which is omitted in \emph{loc. cit}.
    We do not formulate similar theorems involving \cref{biextD} because we do not have made \(\langle W, Z\rangle_\cD\) symmetric.
\end{rem}

\section{The case of semistable reductions over curves} \label{sec:ArchComp}
We now relate the \(\bQ\)-line bundles of \cite[Remark 242]{BPJump} with those out of \cite{BloBiext,SeiBier}.
We work in the case of strong semistable reduction over a curve.
Namely, in this section, we suppose that \(S\) is a smooth irreducible curve over \(\bC\).
We also suppose that the fibers of \(\pi\colon X\to S\) are smooth or strong normal crossing divisors.
In this case, the monodromy of \(R^{2m-1}\pi_*\bZ(m)/\mathrm{tors}\) around \(s\in S\) with singular fiber is unipotent \cite[Corollary 11.19]{PSMHS}.

To prepare for our comparison, we describe how to modify generically homologically trivial cycles to those that are also homologically trivial in the singular fibers in some sense.
\begin{lem} \label{lem:Hodge}
Take \(W\in z^m_{\hom}(S_{X/S}, 0)\).
Assume the Hodge conjecture for each component of the singular fibers of \(\pi\).
Then there exists \(\hat{W}\in z^m(S_{X/S}, 0) \otimes_\bZ \bQ\) whose pullback to \(\pi^{-1}(S_\sm)\) coincides with that of \(W\), with the following property.
Take \(s\in S\backslash S_\sm\).
Let \(i\colon X_s\to X\) be the closed immersion.
Then
\begin{align} \label{bdryHomTriv}
i^*([\hat{W}]_\sing) = 0\in H^{2m}(X_s(\bC), \bQ(m)).
\end{align}
\end{lem}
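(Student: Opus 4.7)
The plan is to construct $\hat{W}$ as $W - V$, where $V \in z^m(S_{X/S}, 0) \otimes_\bZ \bQ$ is a codimension $m$ $\bQ$-cycle on $X$ supported on the singular fibers $\pi^{-1}(S \setminus S_\sm)$ satisfying $i_s^*[V]_\sing = i_s^*[W]_\sing$ in $H^{2m}(X_s(\bC), \bQ(m))$ for every $s \in S \setminus S_\sm$. Then $\hat{W}$ automatically coincides with $W$ over $\pi^{-1}(S_\sm)$ and satisfies \cref{bdryHomTriv}. The construction of $V$ is carried out independently for each singular $s$, and then summed.

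Fix such an $s$ and write $X_s = \bigcup_i D_i$, where the $D_i$ are the smooth irreducible components meeting transversally, as guaranteed by strong semistability. Let $D^{[r]} \coloneqq \coprod_{|I|=r+1} D_I$ denote the disjoint union of $(r+1)$-fold strict intersections, each smooth projective. Put $\alpha \coloneqq i_s^*[W]_\sing \in H^{2m}(X_s(\bC), \bQ(m))$. As the restriction of an algebraic class on the smooth variety $X$, the class $\alpha$ is Hodge of type $(0,0)$ for Deligne's mixed Hodge structure. Moreover, choosing a small disc $s \in \Delta \subset S$ with $\Delta \cap (S \setminus S_\sm) = \{s\}$, the deformation retraction $X_\Delta \simeq X_s$ together with the vanishing $[W]_\sing|_{X_t} = 0$ for any $t \in \Delta \cap S_\sm$ (by generic homological triviality and \cref{rem:BettiEt}) forces $\alpha$ into the kernel of the specialization map $sp \colon H^{2m}(X_s(\bC), \bQ(m)) \to H^{2m}_{\lim, s}(\bQ(m))$.

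Next, by the Clemens--Schmid exact sequence for the strongly semistable degeneration near $s$, combined with the weight spectral sequence of Deligne (see \cite{PSMHS})
\[
E_1^{-r, 2m+r} = H^{2m-r}(D^{[r]}(\bC), \bQ(-r)) \Rightarrow H^{2m}(X_s(\bC), \bQ),
\]
the class $\alpha$ is controlled, at the level of associated graded pieces of the weight filtration on $H^{2m}(X_s, \bQ(m))$, by Hodge classes on the smooth projective varieties $D^{[r]}$. Applying the Hodge conjecture to each $D^{[r]}$ --- which is the stated hypothesis for $r = 0$, and which we also take to hold for the transverse intersections of components in $r > 0$, these being smooth projective --- we realize each such graded piece as the class of a $\bQ$-algebraic cycle $Y^{[r]}$ on $D^{[r]}$ of the appropriate codimension. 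The Gysin pushforwards of the $Y^{[r]}$ through the closed immersions $D^{[r]} \hookrightarrow X$ assemble, inductively along the weight filtration, into a codimension $m$ $\bQ$-cycle $V_s$ on $X$ supported on $X_s$ with $i_s^*[V_s]_\sing = \alpha$. Taking $V \coloneqq \sum_s V_s$ and $\hat{W} \coloneqq W - V$ completes the construction.

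The principal obstacle is the inductive matching along the weight filtration: one must algebraicize not merely the associated graded of $\alpha$, but $\alpha$ itself. The procedure is to choose an algebraic representative for the topmost weight piece of $\alpha$ via the Hodge conjecture on $D^{[0]}$, subtract its Gysin pushforward, observe that the residual class lies in the deeper weight part (which by the weight spectral sequence is controlled by $D^{[1]}$), and iterate. The role of the Clemens--Schmid input is precisely to ensure that $\alpha \in \ker(sp)$ does not escape the subgroup generated by Gysin contributions from the strata, so that the inductive correction process terminates with a genuine equality of classes rather than merely one on $\mathrm{Gr}^W$-pieces.
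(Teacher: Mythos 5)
The overall plan---cancel $i_s^*[W]_\sing$ by a $\bQ$-cycle supported on the singular fibers, using Clemens--Schmid and the Hodge conjecture---matches the paper's strategy, but your execution misses the step that makes the paper's proof work. The class $i^*([\hat{W}]_\sing)$ induces a morphism of mixed Hodge structures $\bQ(0)\to H^{2m}(X_s(\bC),\bQ(m))$ from a pure structure of weight $0$; since morphisms of MHS are strict with respect to $W$, the image meets $W_{-1}$ trivially, so the class vanishes \emph{if and only if} its image in $\Gr_{\mathrm{W}}^0 H^{2m}(X_s(\bC),\bQ(m))$ vanishes. This single observation is the first paragraph of the paper's proof and it collapses your entire ``inductive matching along the weight filtration'': once you subtract a cycle killing the weight-$0$ graded piece, the residual is zero in $\Gr_{\mathrm{W}}^0$, hence already zero---there is no deeper weight part left to chase. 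Because you do not make this reduction, your argument leans on an iteration that you yourself flag as ``the principal obstacle'' and then dispose of with an assertion rather than a proof. As written, the iteration is a genuine gap: after the first subtraction the residual lies in $W_{-1}$, and there is no reason such a class should again be controlled by Hodge classes on lower strata that are in the image of cycle class maps; you never show the process terminates or stays inside the subgroup of algebraic classes.

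Two further discrepancies stem from this. First, you invoke the Hodge conjecture on all strata $D^{[r]}$ for $r>0$, explicitly strengthening the stated hypothesis (which only assumes it ``for each component of the singular fibers''); the paper's reduction to $\Gr_{\mathrm{W}}^0$ means only the components $Y_0$ are ever needed. Second, the paper works in local cohomology $H^{2m}_{X_s(\bC)}(X(\bC),\bQ(m))$---where a cycle supported on $X_s$ naturally lives, where $\Gr_{\mathrm{W}}^0$ is built from the purity classes $H^{2m}_{Y_0(\bC)}(X(\bC),\bQ(m))\cong H^{2m-2}(Y_0(\bC),\bQ(m-1))$ of the smooth components, and where the polarizability of the resulting pure Hodge structures yields the splitting of the sequence involving $\Gr_{\mathrm{W}}^0 L$ needed to lift the Hodge class. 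You work directly with $H^{2m}(X_s,\bQ(m))$ and a weight spectral sequence whose quoted $E_1$-term, $E_1^{-r,2m+r}=H^{2m-r}(D^{[r]},\bQ(-r))$, does not agree with the Mayer--Vietoris spectral sequence for $X_s$ (that one has $E_1^{p,q}=H^q(D^{[p]},\bQ)$ with $p\geqq 0$), and the codimensions of the cycles $Y^{[r]}$ you want to Gysin-push are left as ``appropriate,'' which hides a nontrivial degree bookkeeping.
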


\begin{proof}
Take \(s\in S\backslash S_\sm\).
By considering the weight of the morphism \(\bQ\to H^{2m}(X_s(\bC), \bQ(m))\) of mixed Hodge structures induced by \(i^*([\hat{W}]_\sing)\), it suffices to show that it vanishes in
\[
    \Gr_{\mathrm{W}}^{0}H^{2m}(X_s(\bC), \bQ(m)),
\]
where \(\mathrm{W}\) is the weight filtration of the mixed Hodge structure.

Fix a local parameter of \(S(\bC)\) at \(s\).
For an integer \(p\), let \(H^{2m}(X_\infty, \bQ(p))\) be the corresponding limit mixed Hodge structure with Tate twist.
Let \(i_s\colon X_s\to X\) be the closed immersion.
The image \(i_s^*[W]_\sing\in H^{2m}(X_s(\bC), \bQ(m))\) vanishes in the last term of the Clemens--Schmid exact sequence
\[
    \dots \xrightarrow{N} H^{2m - 2}(X_\infty, \bQ(m - 1)) \to H^{2m}_{X_s(\bC)}(X(\bC), \bQ(m)) \xrightarrow{L} H^{2m}(X_s(\bC), \bQ(m)) \to H^{2m}(X_\infty, \bQ(m)),
\]
since \(W\) is generically homologically trivial.
Hence, \(i_s^*[W]_\sing\) comes from \(H^{2m}_{X_s(\bC)}(X(\bC), \bQ(m))\).

More precisely, the short exact sequence
\[
    0\to \Ker \Gr_\mathrm{W}^{0}L\to \Gr_{\mathrm{W}}^{0}H^{2m}_{X_s(\bC)}(X(\bC), \bQ(m)) \to \Ima \Gr_\mathrm{W}^{0}L \to 0
\]
splits since these are polarizable pure Hodge structures.
Therefore, the Hodge class \(i_s^*[W]_\sing\) in \(\Ima \Gr_\mathrm{W}^{0}L\) lifts to that in \(\Gr_{\mathrm{W}}^{0}H^{2m}_{X_s(\bC)}(X(\bC), \bQ(m))\).
The lift can again be lifted to a Hodge class in
\[
\bigoplus_{Y_0\in \pi_0(Y)} H^{2m-1}_{Y_0(\bC)}(X(\bC), \bQ(m)).
\]
We win by using the Hodge conjecture there.
\end{proof}

\begin{thm} \label{thm:ArchComp}
Suppose that \(S\) is a smooth irreducible curve over \(\bC\).
Also suppose that the fibers of \(\pi\colon X\to S\) are smooth or strong normal crossing divisors.
Take \(W\in z^m_{\hom}(S_{X/S}, 0), Z\in z^n_{\hom}(S_{X/S}, 0)\).
Let
\begin{align*}
    &\nu\in Ext^1_{\AVMHS}(\bZ_{S_\sm (\bC)}, R^{2m-1}\pi_*\bZ(m)/\mathrm{tors})(S_\sm(\bC)), \\
    &\omega\in Ext^1_{\AVMHS}(R^{2m-1}\pi_*\bZ(m)/\mathrm{tors}, \bZ(1)_{S_\sm (\bC)})(S_\sm (\bC))
\end{align*}
be their images.
Suppose that there exists \(\hat{W}\in z^m(S_{X/S}, 0)\otimes_{\bZ} \bQ\) that satisfies \cref{bdryHomTriv}.
Then we have an isomorphism \(\bE_{\hat{W}, Z} \cong \ol{\cB}(R^{2m-1}\pi_*\bZ(m)/\mathrm{tors})_{\omega, \nu}\) of \(\bQ\)-line bundles which extend \cref{thm:ArchCompSm} over \(S_\sm\).
\end{thm}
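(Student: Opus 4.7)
The plan is to leverage the smooth-case comparison \cref{thm:ArchCompSm} and match corresponding rational sections of both $\bQ$-line bundles. Working locally on $S$ and invoking the moving lemma, I would replace $\hat W, Z$ within their rational equivalence classes by cycles $\hat W', Z'$ that are equidimensional over $S$ and have disjoint supports; these produce a rational section $\langle \hat W', Z'\rangle$ of $\bE_{\hat W, Z}$ which, on $S_\sm$, corresponds to $B_{\hat W', Z'}^\circ$ via \cref{thm:ArchCompSm} up to the canonical sign twist in $\Gm^\an$. The remaining task is to show that this smooth-locus section extends across every $s \in S \setminus S_\sm$ to a section of $\ol{\cB}(R^{2m-1}\pi_*\bZ(m)/\mathrm{tors})_{\omega, \nu}$, and then to match the divisors of the resulting rational sections.

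For the extension step, recall from \cref{sec:Arch} that membership in $\ol{\cB}$ at $s$ is equivalent to the vanishing of the torsion pairing $\wt{\tau}_s$, and this is in turn equivalent to $B_{\hat W', Z'}^\circ$ underlying an admissible variation of mixed Hodge structures on a neighbourhood of $s$ in the sense of Saito. I would produce such an extension by lifting the pushout construction of $B^\circ_{W, Z}$ from \cref{sec:Arch} to the derived category of mixed Hodge modules on $S$. The cycle-theoretic assumption \cref{bdryHomTriv} is exactly what lets the analogue of $[\hat W]_\sing \colon \bZ \to R^{2m}\pi_* i_{\hat W,*} Ri_{\hat W}^! \bZ(m)$ descend to a morphism in $D^b\mathrm{MHM}(S)$ whose fibre at $s$ vanishes, so that the relevant pushout (the analogue of $B_{W,Z}$) already makes sense over all of $S$.

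The decisive input is \cite[Lemma 2.18]{BFNPANF}: for the curve $S$ and a variation of mixed Hodge structures on $S_\sm$ arising as a subquotient of $R\pi_* \bQ(m)$, the intermediate extension $j_{\sm, !*}$ is exact on the relevant short exact sequences and, on underlying complexes, agrees with $j_{\sm,*}$ up to shift. Applying $j_{\sm, !*}$ to the Hain blended extension defined over $S_\sm$ thereby yields a blended extension in $\mathrm{MHM}(S)$ of $\bZ_S$ by $j_{\sm, !*}(R^{2m-1}\pi_* \bZ(m)/\mathrm{tors})$ by $\bZ(1)_S$, and then \cite[Remark 20]{BPJump} yields the desired bounded-height property. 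The hard part will be verifying that the blended extension produced by $j_{\sm, !*}$ genuinely coincides on $S_\sm$ with $B^\circ_{\hat W', Z'}$, rather than differing from it by a nontrivial automorphism of biextensions; I expect this to follow from the fact that $j_{\sm,!*}$ restricts to the identity on $S_\sm$ combined with a diagram chase in the style of \cref{ex:HomTrivExSeq}, using \cref{bdryHomTriv} to control the weight-zero contribution at the singular fibre via the Clemens--Schmid sequence employed in \cref{lem:Hodge}.

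Granting the extension, both $\bE_{\hat W, Z}$ and $\ol{\cB}(\cdot)_{\omega, \nu}$ become $\bQ$-line bundles on $S$ equipped with rational sections that coincide on $S_\sm$. The isomorphism of $\bQ$-line bundles is then pinned down by comparing the divisors of these sections at each singular point. On the $\bE$-side, \cref{prop:FullCalc} identifies the divisor of $\langle \hat W', Z'\rangle$ with $\pi_*(\hat W' \cdot Z')$, and the corresponding divisor on the $\ol{\cB}$-side is given by the same \cite[6.8 Satz]{SeiBier}. Matching these finishes the proof, with the comparison being independent of the choice of $(\hat W', Z')$ by bilinearity and \cref{TwistedSection}.
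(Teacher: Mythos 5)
Your high-level plan --- localize around a singular fibre, invoke \cref{thm:ArchCompSm} to identify \(\langle W',Z'\rangle\) with \(B^\circ_{W',Z'}\) over \(S_\sm\), reduce to showing \(\wt\tau_s(B^\circ_{W',Z'})=0\), and attack this via mixed Hodge modules using \cite[Lemma 2.18]{BFNPANF} and \cite[Remark 20]{BPJump} --- is the same as the paper's. But you have misidentified where the difficulty actually lies, and that misidentification is a genuine gap.

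You say the hard part will be ``verifying that the blended extension produced by \(j_{\sm,!*}\) genuinely coincides on \(S_\sm\) with \(B^\circ_{\hat W',Z'}\).'' That is not where the problem is: \(j_{\sm,!*}\) tautologically restricts to the identity over \(S_\sm\). The actual obstruction is that \(j_{\sm,!*}\) is not exact, so applying it to the two-step weight filtration of \(B^\circ_{W',Z'}\) does not a priori produce a filtered object in \(\mathrm{MHM}(U)\), let alone a blended extension. \cite[Lemma 2.18]{BFNPANF} is applied in the paper only to the sub-extension \(0 \to \bQ(1) \to E_{Z'} \to R^{2m-1}\pi_*\bQ(m) \to 0\); for the quotient extension by \(\bQ\) one cannot simply apply \(j_{!*}\). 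The paper instead builds the extension class directly inside \(\mathrm{MHM}(U)\): it takes perverse cohomology of the triangles \cref{PervTri}, shows the perverse cycle class factors appropriately (the Claim following \cref{PervCyc}), reduces the relevant \(\mathrm{Ext}^1\) computation via \cref{clm:NonQuotH^0,clm:QuotH^0}, and only then uses \cref{bdryHomTriv} together with the topological fact that \(X_s(\bC)\) is a deformation retract of the tube over a small disc to kill the obstruction class \cref{PervExt}. It is that vanishing --- not a diagram chase about automorphisms of biextensions --- that makes \(\sB_{W'}(Z')\) exist as a blended extension of mixed Hodge modules; the identification \(\sB_{W'}(Z') = j_{!*}(B_{W'}(Z')[1])\) is a consequence (the constructed filtration already consists of intermediate extensions), not the starting point.

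Two further slips. First, \(\wt\tau_s = 0\) is not equivalent to \(B^\circ_{W',Z'}\) being admissible: admissibility is automatic here (it is the Example in \cref{sec:Arch} citing \cite{GNPPHyp}), whereas \(\wt\tau_s = 0\) is the strictly stronger monodromy-triviality condition of \cite[Remark 20]{BPJump}. Second, the concluding divisor comparison is both unnecessary and miscited: once you know \(B^\circ_{W',Z'}\) extends as a section of \(\ol{\cB}(\,\cdot\,)_{\omega,N\nu}\) near \(s\), sending \(\langle W',Z'\rangle \mapsto B^\circ_{W',Z'}\) already defines the local isomorphism (this is exactly what the paper does); and in any case \cite[6.8 Satz]{SeiBier} is a statement about \(\bE\), not about \(\ol{\cB}\).
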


\begin{proof}
Take a positive integer \(N\) such that \(N\hat{W}\) has integral coefficients.
Take \(s\in S\backslash S_\sm\).
By moving lemma, over some open neighborhood \(U\) of \(s\) in \(S\), there exists \(W'\in z^m_{\hom, \equi}(U_{X/S}, 0), Z'\in z^n_{\hom, \equi}(U_{X/S}, 0)\) rationally equivalent to \(N\hat{W}, Z\), respectively, such that \(W'\cap Z' = \emptyset\).
It suffices to show that \(\ol{\cB}(R^{2m-1}\pi_*\bZ(m)/\mathrm{tors})_{\omega, N\nu}\) comes from a line bundle and that the section \(B^\circ_{W', Z'}\in \cB(R^{2m-1}\pi_*\bZ(m)/\mathrm{tors})_{\omega, N\nu}(U\backslash \{s\})\) defines a section of \(\ol{\cB}(R^{2m-1}\pi_*\bZ(m)/\mathrm{tors})_{\omega, N\nu}\) since then \(\langle W', Z'\rangle \mapsto B^\circ_{W', Z'}\) defines the desired isomorphism locally around \(s\).
For all these, it is enough to show that \(\wt\tau_s (B^\circ_{W', Z'}) = 0\).

Recall the distinguished triangle
\[
    R\pi_{Z', !}\bQ(m)\to R\pi_*\bQ(m)\xrightarrow{-} R\pi_{*}i_{Z', *}\bQ(m)\xrightarrow{-} R\pi_{Z', !}\bQ(m)[1]
\]
that gives a variant of \cref{E_ZDual}, where \(-\) means the negative of the conventions in \cite{HartsRes}.
This induces an exact sequence
\[
    0\to \bQ(1)_{U(\bC)\backslash \{s\}}\to E_{Z', \bQ}\to R^{2m-1}\pi_*\bQ(m) \to 0
\]
of variations of \(\bQ\)-mixed Hodge structures on \(U(\bC)\backslash \{s\}\).
By \cite[Lemma 2.18]{BFNPANF}, this induces an exact sequence
\[
    0\to \bQ(1)_{U(\bC)}[1]\to j_{!*}(E_{Z', \bQ}[1])\to j_{!*}(R^{2m-1}\pi_*\bQ(m)[1]) \to 0
\]
of \(\bQ\)-mixed Hodge modules on \(U\), where \(j\colon U\backslash \{s\}\to U\) is the open immersion.

Also recall the distinguished triangles
\begin{align} \label{PervTri}
\begin{CD}
    R\pi_*i_{W', *}Ri_{W'}^!\bQ(m)@>{-}>> R\pi_{Z', !}\bQ(m)@>>> R\pi_{W', *}j'_!\bQ(m)@>{-}>> R\pi_*i_{W', *}Ri_{W'}^!\bQ(m)[1] \\
    @|                                      @VVV                     @VVV                     @| \\
    R\pi_*i_{W', *}Ri_{W'}^!\bQ(m)@>{-}>> R\pi_*\bQ(m)      @>>> R\pi_{W', *}\bQ(m)    @>{-}>> R\pi_*i_{W', *}Ri_{W'}^!\bQ(m)[1],
\end{CD}
\end{align}
where \(j'\colon X\backslash (|W'|\cup |Z'|)\to X\backslash |W'|\) is the open immersion.
Taking the perverse cohomology of the first row induces commuting exact sequences
\begin{equation} \label{PervBlendExt}
{\fontsize{9truept}{9truept}\selectfont
\begin{tikzcd}[cramped, column sep = tiny]
{}^p\!R^{2m}\pi_*i_{*}Ri^!\bQ(m) \arrow[r] \arrow[d] &
{}^p\! R^{2m}\pi_{Z', !}\bQ(m) \arrow[r] \arrow[d] &
{}^p\! R^{2m}\pi_{W', *}j'_!\bQ(m) \arrow[r] &
{}^p\! R^{2m + 1}\pi_* i_{*}Ri^!\bQ(m) \arrow[r] &
{}^p\! R^{2m + 1}\pi_{Z', !} \bQ(m) \\
0 \arrow[r] &
\displaystyle\frac{{}^p\! R^{2m}\pi_{Z', !}\bQ(m)}{{}^p\!R^{2m}\pi_*i_{*}Ri^!\bQ(m)} \arrow[r] &
{}^p\! B_{W'}(Z') \arrow[r] \arrow[u] &
\bQ[1] \arrow[r] \arrow[u] &
0 \arrow[u]
\end{tikzcd}
}
\end{equation}
of mixed Hodge modules, where \(i\coloneqq i_{W'}\), and where the last row below is defined by the pullback via a variant
\begin{align} \label{PervCyc}
    \bQ[1]\to {}^p\!R^1\pi_*\bQ\xrightarrow{-[W']_\sing} {}^p\!R^{2m + 1}\pi_*i_{W', *}Ri_{W'}^!\bQ(m)
\end{align}
of the cycle class, which has the following property.
\begin{clm}
    The composition of \cref{PervCyc} to \({}^p\! R^{2m + 1}\pi_{Z', !} \bQ(m)\) vanishes.
\end{clm}
\begin{proof}
By Ehresmann's lemma, the simple objects in the weight graded pieces of \({}^p\! R^{2m + 1}\pi_{Z', !} \bQ(m)\) are subquotients of \(j_{!*}(R^{2m}\pi_*i_{W', *}Ri_{W'}^!\bQ(m)[1])\) or of the form \(i_*V\), where \(i\colon \{s\}\to U\) and \(V\) is a pure Hodge structure.
We have \(\Hom(\bQ[1], i_*V) = \Hom(\bQ[1], V) = 0\).
Also, the usual cycle class is \(0\) in \(\Hom(\bQ[1], j_{!*}(R^{2m}\pi_*i_{W', *}Ri_{W'}^!\bQ(m)[1])) = \Hom(\bQ, R^{2m}\pi_*i_{W', *}Ri_{W'}^!\bQ(m))\).
Therefore, we have the desired vanishing.
\end{proof}

We start decoding the quotient in \cref{PervBlendExt}.
\begin{clm} \label{clm:NonQuotH^0}
    The morphism \(H^0({}^p\! R\pi_{Z', !}\bQ(m)) \to H^0({}^p\! R\pi_*\bQ(m))\) is isomorphic.
\end{clm}

\begin{proof}
Consider the distinguished triangle
\[
    R\pi_{Z', !}\bQ(m)\to R\pi_*\bQ(m)\to R\pi_*i_{Z', *}\bQ(m)\xrightarrow{+1}.
\]
It induces an exact sequence
\[
    {}^p\! R^{2m - 1}\pi_*i_{Z', *}\bQ(m)\to {}^p\! R^{2m}\pi_{Z', !}\bQ(m)\to {}^p\! R^{2m}\pi_*\bQ(m)\to {}^p\! R^{2m}\pi_*i_{Z', *}\bQ(m) = 0,
\]
where the last equality is by the decomposition theorem and the proper intersection of \(Z'\) with the fibers of \(\pi\).
Since \(H^0\) is right exact in the category of perverse sheaves, we get the surjectivity.
The injectivity is again by the decomposition theorem and the proper intersection.
\end{proof}

\begin{clm} \label{clm:QuotH^0}
    The morphism
    \[
        H^0({}^p\! R^{2m}\pi_{Z', !}\bQ(m)/{}^p\!R^{2m}\pi_*i_{W', *}Ri_{W'}^!\bQ(m))\to H^0({}^p\! R^{2m}\pi_*\bQ(m)/{}^p\!R^{2m}\pi_*i_{W', *}Ri_{W'}^!\bQ(m))
    \]
    is an isomorphism.
\end{clm}

\begin{proof}
    We define \(C, C'\) as cokernels in the commuting exact sequences
    \[
    \begin{CD}
        {}^p\! R^{2m}\pi_*i_{W', *}Ri_{W'}^!\bQ(m) @>>> {}^p\! R^{2m}\pi_{Z', !}\bQ(m) @>>> C @>>> 0\\
        @|                                              @VVV                      @VVV  @. \\
        {}^p\! R^{2m}\pi_*i_{W', *}Ri_{W'}^!\bQ(m) @>>> {}^p\! R^{2m}\pi_*\bQ(m) @>>> C' @>>> 0
    \end{CD}
    \]
    induced by \cref{PervTri}.
    We can apply \(H^0\) without losing the exactness since \(H^0\) is right exact in the category of perverse sheaves.
    We win by \cref{clm:NonQuotH^0} and the five lemma.
\end{proof}

By \cref{clm:QuotH^0}, the pushforward of the bottom row of \cref{PervBlendExt} via
\[
    {}^p\! R^{2m}\pi_{Z', !}\bQ(m)/{}^p\!R^{2m}\pi_*i_{W', *}Ri_{W'}^!\bQ(m) \to H^0({}^p\! R^{2m}\pi_{Z', !}\bQ(m)/{}^p\!R^{2m}\pi_*i_{W', *}Ri_{W'}^!\bQ(m))
\]
is the same as the similar construction applied to the bottom row of \cref{PervTri}.
By \cref{ex:HomTrivExSeq}, the extension is given by
\begin{align} \label{PervExt}
    \bQ[1]\to {}^p\!R^{2m}\pi_*\bQ(m)[1] \to H^0({}^p\!R^{2m}\pi_*\bQ(m))[1] \to H^0({}^p\!R^{2m}\pi_*\bQ(m)/{}^p\!R^{2m}\pi_*i_{W', *}Ri_{W'}^!\bQ(m))[1],
\end{align}
where the first morphism is induced by
\[
    \bQ[1]\to R\pi_*\bQ[1]\to R\pi_*i_{W', *}Ri_{W'}^!\bQ(m)[2m + 1]\to R\pi_*\bQ(m)[2m + 1],
\]
part of which gives \cref{PervCyc}.
Upon identifying \(H^0({}^p\!\tau_{\leqq 2m}R\pi_*\bQ(m)) \xrightarrow{\simeq} H^0({}^p\!R^{2m}\pi_*\bQ(m))\), the first two morphisms of \cref{PervExt} are those in
\[
    \bQ[1]\to {}^p\!\tau_{\leqq 2m}R\pi_*\bQ(m)[1] \to H^0({}^p\!\tau_{\leqq 2m}R\pi_*\bQ(m))[1]\hookrightarrow R^{2m}\pi_*\bQ(m)[1],
\]
where the injectivity of the last morphism is by the decomposition theorem.
The entire composition is \(0\) due to the definition of \(W'\) as well as because singular fibers of \(\pi\), for example, \(X_s(\bC)\), are \cite{ClemDegen} homotopic to the inverse image of a small disc around \(s\) by \(\pi\).
Therefore, \cref{PervExt} also vanishes.

The last paragraph shows that the bottom row of \cref{PervBlendExt} comes from an extension
\[
    0 \to H^{-1}({}^p\! R^{2m}\pi_{Z', !}\bQ(m)/{}^p\!R^{2m}\pi_*i_{W', *}Ri_{W'}^!\bQ(m))[1]\to B^{-1}_{W'}(Z') \to \bQ[1] \to 0.
\]
We can push this out by
\[
    H^{-1}({}^p\! R^{2m}\pi_{Z', !}\bQ(m)/{}^p\!R^{2m}\pi_*i_{W', *}Ri_{W'}^!\bQ(m))[1]\to j_{!*}(E_{Z', \bQ}[1])
\]
to get an extension
\begin{align} \label{PervB_W(Z)}
    0\to j_{!*}(E_{Z', \bQ}[1]) \to \sB_{W'}(Z') \to \bQ[1] \to 0.
\end{align}

We have \(\sB_{W'}(Z') = j_{!*}(B_{W'}(Z')[1])\) since the left side has a filtration consisting of intermediate extensions.
Let \(V\) be the rational limit mixed Hodge structure of \(B_{W'}(Z')\) at \(s\) for some local parameter.
Let \(N_V\) be its monodromy logarithm. 
As in \cite[Remark 20]{BPJump}, there exist \(v_0, v_{-2}\in V\) and a subspace \(U\subset V\) such that
\begin{itemize}
    \item \(V = \bQ v_0\oplus U\oplus \bQ v_{-2}\),
    \item \(N_V(U) \subseteq U\),
    \item \(N_V(v_0) \in \bQ v_{-2}\), and
    \item \(v_0\mapsto 1\), \(U, \bQ v_{-2}\to 0\) by the surjection in \cref{PervB_W(Z)},
\end{itemize}
and it suffices to show that \(N_V(v_0) = 0\).
The stalk of the surjection in \cref{PervB_W(Z)} is a surjection \(\Ker N_V\mapsto \bQ\).
This happens only if \(N_V(v_0) = 0\).
\end{proof}

As an application of Theorem \cref{thm:ArchComp}, we study the asymptotic behavior of the height of \(B^\circ_{W_{t}, Z_t}\) when \(t\in S\) is around a point of \(S\) with the singular fiber.
For the meaning of the height, we refer to \cite[(3.3.7)]{HainBiext}.

\begin{thm} \label{thm:Apply}
Suppose that \(S\) is a smooth irreducible curve over \(\bC\).
Also suppose that the fibers of \(\pi\colon X\to S\) are smooth or strong normal crossing divisors.
Take \(W\in z^m_{\hom}(S_{X/S}, 0), Z\in z^n_{\hom}(S_{X/S}, 0)\) that properly intersect.
Suppose that there exists \(\hat{W}\in z^m(S_{X/S}, 0)\otimes_{\bZ} \bQ\) that satisfies \cref{bdryHomTriv}.
Take a closed point \(s\in S\).
Let \(p\colon \{z\in \bC\mid |z|<1\}\to S(\bC)\) be a local coordinate in a sufficiently small region around \(s\).
For \(0\neq t\) in the source of \(p\), let \(h(t)\) be the height of \(B^{\circ}_{W_{p(t)}, Z_{p(t)}}\) in terms of \cite[Definition D.1]{GZFam}.
Then
\[
    h(t) - \langle W, Z\rangle_{\mathrm{a}, s}\log |t|
\]
is bounded and continuous around \(t = 0\), where \(\langle W, Z\rangle_{\mathrm{a}, s}\) is the local intersection index defined in \cite[\S 2]{BeiHeiCyc}.
\end{thm}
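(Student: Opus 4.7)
The plan is to combine the identification of $\bQ$-line bundles in \cref{thm:ArchComp} with the divisor computation of \cref{prop:FullCalc} and the comparison between Betti and \'etale cycle classes. First, I would invoke \cref{lem:Hodge} to produce $\hat W\in z^m(S_{X/S},0)\otimes_\bZ\bQ$ agreeing with $W$ over $\pi^{-1}(S_\sm)$ and satisfying \cref{bdryHomTriv} on every singular fiber. Then \cref{thm:ArchComp} yields an isomorphism $\bE_{\hat W,Z}\cong \ol{\cB}(R^{2m-1}\pi_*\bZ(m)/\mathrm{tors})_{\omega,\nu}$ of $\bQ$-line bundles on $S$, extending the isomorphism of \cref{thm:ArchCompSm} over $S_\sm$. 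Over $S_\sm$ this isomorphism sends the rational section $\langle\hat W,Z\rangle=\langle W,Z\rangle$ of $\bE_{\hat W,Z}$ to the section $B^\circ_{W,Z}$ of the biextension, up to the twist by $-1$ recorded in \cref{thm:ArchCompSm}, which does not affect heights.

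Second, I would apply a Lear-type extension statement. Because $\ol{\cB}(\dots)_{\omega,\nu}$ is a holomorphic $\bQ$-line bundle on $S$ extending the fibers of the biextension, and the height $h(t)$ is defined in \cite[Definition D.1]{GZFam} from the section $B^\circ_{W_{p(t)},Z_{p(t)}}$, any rational section of this $\bQ$-line bundle whose order of vanishing at $s$ equals $q\in\bQ$ contributes exactly $q\log|t|$ to $h(t)$ modulo a bounded continuous remainder as $t\to 0$. This is the content of \cite[(6.2.2)]{LeaExt} together with \cite[Lemma 6.1]{HdJAsym}. Transporting through the isomorphism of the first step, $q$ equals the order of vanishing at $s$ of $\langle\hat W,Z\rangle$ viewed as a rational section of $\bE_{\hat W,Z}$, which by \cref{prop:FullCalc} (applied after clearing denominators by multiplying $\hat W$ by a suitable positive integer) is precisely the coefficient of $s$ in $\pi_*(\hat W\cdot Z)$.

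Third, I would match this coefficient with Beilinson's local intersection index $\langle W,Z\rangle_{\mathrm{a},s}$ of \cite[\S 2]{BeiHeiCyc}. Beilinson defines the local pairing over $\mathrm{Spec}\,\cO_{S,s}$ by choosing a lift of $W$ whose \'etale cycle class on the special fiber vanishes and intersecting it with a similar lift of $Z$; the cycle $\hat W$ produced by \cref{lem:Hodge} supplies exactly such a lift with $\bQ$-coefficients, since its Betti cycle class vanishes on the singular fiber by \cref{bdryHomTriv}, and this transfers to the \'etale setting via \cref{rem:BettiEt}. That the resulting intersection multiplicity agrees with $\mathrm{coeff}_s\pi_*(\hat W\cdot Z)$ then follows from the projection formula: over a small neighborhood of $s$, the pushforward is supported at $s$ and records precisely the contribution from the closed fiber.

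The hardest technical point I expect is not the final identification, which the introduction advertises as straightforward via the compatibility of motivic and \'etale cycle class maps, but rather the rigorous application of the Lear extension step in our strongly semistable situation: one must verify that the $\bQ$-line bundle of Brosnan--Pearlstein is endowed with precisely the metric structure implicit in \cite[Definition D.1]{GZFam}, and that the remainder is genuinely continuous at $t=0$ and not merely locally bounded. This should follow from the admissibility of the blended extension constructed in the proof of \cref{thm:ArchComp}; in particular, the limit mixed Hodge structure there satisfies $N_V(v_0)=0$, so the blended extension trivialises torsion-freely at $s$, which is exactly the condition needed for the height function to have a continuous Lear extension with rational leading coefficient.
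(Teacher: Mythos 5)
Your overall strategy matches the paper's: transport $\langle \hat W, Z\rangle$ through the isomorphism of \cref{thm:ArchComp}, use \cite[Lemma 6.1]{HdJAsym} to reduce the asymptotics of $h(t)$ to the order of vanishing of that rational section at $s$, compute that order via \cref{prop:FullCalc} as the coefficient of $s$ in $\pi_*(\hat W\cdot Z)$, and finally match this coefficient with Beilinson's local index. The continuity of the remainder indeed comes from $\wt\tau_s(B^\circ_{W',Z'})=0$ established in the proof of \cref{thm:ArchComp}, via \cite[Property 18]{BPJump}, as you suggest.

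Two points in your proposal are not adequate as written. First, the $-1$ twist of \cref{thm:ArchCompSm} is dismissed as ``not affecting heights,'' but inverting the $\Gm$-structure of the biextension flips the sign of $\log|\cdot|$ in the canonical metric; left unaddressed, your argument would produce the wrong sign in the leading coefficient. The paper shows this sign is exactly cancelled by the anticommutativity of a specific square of \'etale cohomology groups (relating $H^*_{\et}(C_v,\Gm)$ and $H^*(X_{C_v},\mu_{\ell^a})$) appearing in the comparison of the two descriptions of the local intersection multiplicity. Second, equating $\mathrm{coeff}_s\pi_*(\hat W\cdot Z)$ with $\langle W,Z\rangle_{\mathrm{a},s}$ is not ``the projection formula'': Beilinson's index is defined \'etale-cohomologically over $\Spec\cO_{S,s}^{\wedge}$ from cycle classes supported on the special fiber, and matching it with the Chow-theoretic intersection multiplicity requires the compatibility of cycle class maps with intersection product and proper pushforward, together with the Clemens result \cite{ClemDegen} identifying $X_s(\bC)$ up to homotopy with the tubular neighborhood $\pi^{-1}(\mathrm{im}\,p)$. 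The paper invokes \cite[Lemma-Definition 2.1.1.c)]{BeiHeiCyc} and a variant of \cite[Remark B.5]{LLMot} precisely for this step, and this cannot be elided. A minor point: $\hat W$ is assumed to exist in the statement of the theorem, so there is no need to invoke \cref{lem:Hodge} here; that lemma is used to supply the hypothesis when deducing \cref{thm:intro}.
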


\begin{proof}
In the proof of \cref{thm:ArchComp}, we have seen that \(\wt\tau_s (B^\circ_{W', Z'}) = 0\).
This shows \cite[Property 18]{BPJump} that the height, namely the logarithm of the metric as in \cite[Definition D.1]{GZFam}, extends continuously around \(t = 0\).
By \cite[Lemma 6.1]{HdJAsym}, we have only to investigate the multiplicity of \(\langle \hat{W}, Z\rangle\) around \(s\), where \(N, W'\) are defined as in the proof of \cref{thm:ArchComp} and we consider suitable restrictions of \(\hat{W}, Z\) to a Zariski open subset of \(S\backslash \{s\}\).

We may assume that \(\hat{W}, Z\) properly intersect by moving lemma.
We can apply \cite[Lemma-Definition 2.1.1.c)]{BeiHeiCyc}, used in conjunction with a variant of \cite[Remark B.5]{LLMot}, to the following situation in the notation there since \(X_s(\bC)\) is \cite{ClemDegen} homotopic to the inverse image of the image of \(p\) by \(\pi\).
\begin{itemize}
    \item Let \(\cO_{C_v}(C_v)\) be the completion of \(\cO_{S, s}\).
    \item Put \(X_{C_v} \coloneqq X\times_S \Spec \cO_{S, s}\).
    \item Let \(\beta_1, \beta_2\) be suitable restrictions of \([\hat{W}/N]_\et, [Z]_\et\).
\end{itemize}
The outcome is that \(\langle W, Z\rangle_{\mathrm{a}, s}\) coincides with the multiplicity of \(\pi_*(\hat{W}\cdot Z)/N\) at \(s\), where the twist in \cref{thm:ArchCompSm} is absorbed into the anticommutativity of the diagram
\[
    \begin{CD}
        H^0_{\et}(C_v, \Gm) @>>> H^1_{\{s\}}(C_v, \Gm) \\
        @VVV           @VVV \\
        H^1_{\et}(X_{C_v}, \mu_{\ell^a}) @>>> H^2_{\{s\}}(C_v, \mu_{\ell^a})
    \end{CD}
\]
of etale cohomology for a prime number \(\ell\) and a nonnegative integer \(a\).
The multiplicity is exactly that of \(\langle \hat{W}, Z\rangle\) by \cref{prop:FullCalc}.
\end{proof}


\begin{thebibliography}{GAGP88}

\bibitem[BFNP09]{BFNPANF}
P.~Brosnan, H.~Fang, Z.~Nie, and G.~Pearlstein.
\newblock Singularities of admissible normal functions.
\newblock {\em Invent. Math.}, 177(3):599--629, 2009.
\newblock With an appendix by Najmuddin Fakhruddin.
\newblock \href {https://doi.org/10.1007/s00222-009-0191-9} {\path{doi:10.1007/s00222-009-0191-9}}.

\bibitem[Bi86]{BeiAbs}
A.~A. Be\u~ilinson.
\newblock Notes on absolute {H}odge cohomology.
\newblock In {\em Applications of algebraic {$K$}-theory to algebraic geometry and number theory, {P}art {I}, {II} ({B}oulder, {C}olo., 1983)}, volume~55 of {\em Contemp. Math.}, pages 35--68. Amer. Math. Soc., Providence, RI, 1986.
\newblock \href {https://doi.org/10.1090/conm/055.1/862628} {\path{doi:10.1090/conm/055.1/862628}}.

\bibitem[Blo]{BloCub}
S.~Bloch.
\newblock Some notes on elementary properties of higher chow groups, including functoriality properties and cubical chow groups.
\newblock URL: \url{https://math.uchicago.edu/~bloch/cubical_chow.pdf}.

\bibitem[Blo84]{BloHei}
S.~Bloch.
\newblock Height pairings for algebraic cycles.
\newblock In {\em Proceedings of the {L}uminy conference on algebraic {$K$}-theory ({L}uminy, 1983)}, volume~34, pages 119--145, 1984.
\newblock \href {https://doi.org/10.1016/0022-4049(84)90032-X} {\path{doi:10.1016/0022-4049(84)90032-X}}.

\bibitem[Blo86]{BloBei}
S.~Bloch.
\newblock Algebraic cycles and the {B}e\u ilinson conjectures.
\newblock In {\em The {L}efschetz centennial conference, {P}art {I} ({M}exico {C}ity, 1984)}, volume~58 of {\em Contemp. Math.}, pages 65--79. Amer. Math. Soc., Providence, RI, 1986.
\newblock \href {https://doi.org/10.1090/conm/058.1/860404} {\path{doi:10.1090/conm/058.1/860404}}.

\bibitem[Blo89]{BloBiext}
S.~Bloch.
\newblock Cycles and biextensions.
\newblock In {\em Algebraic {$K$}-theory and algebraic number theory ({H}onolulu, {HI}, 1987)}, volume~83 of {\em Contemp. Math.}, pages 19--30. Amer. Math. Soc., Providence, RI, 1989.
\newblock \href {https://doi.org/10.1090/conm/083/991974} {\path{doi:10.1090/conm/083/991974}}.

\bibitem[BP19]{BPJump}
P.~Brosnan and G.~Pearlstein.
\newblock Jumps in the {Archimedean} height.
\newblock {\em Duke Mathematical Journal}, 168(10), jul 2019.
\newblock \href {https://doi.org/10.1215/00127094-2018-0056} {\path{doi:10.1215/00127094-2018-0056}}.

\bibitem[Bry02]{BryPL}
J.~L. Bryant.
\newblock Piecewise linear topology.
\newblock In {\em Handbook of geometric topology}, pages 219--259. North-Holland, Amsterdam, 2002.

\bibitem[BS19]{BSModulus}
F.~Binda and S.~Saito.
\newblock Relative cycles with moduli and regulator maps.
\newblock {\em J. Inst. Math. Jussieu}, 18(6):1233--1293, 2019.
\newblock \href {https://doi.org/10.1017/s1474748017000391} {\path{doi:10.1017/s1474748017000391}}.

\bibitem[Car80]{CarExt}
J.~A. Carlson.
\newblock Extensions of mixed {H}odge structures.
\newblock In {\em Journ\'ees de {G}\'eometrie {A}lg\'ebrique d'{A}ngers, {J}uillet 1979/{A}lgebraic {G}eometry, {A}ngers, 1979}, pages 107--127. Sijthoff \& Noordhoff, Alphen aan den Rijn---Germantown, Md., 1980.

\bibitem[Cle77]{ClemDegen}
C.~H. Clemens.
\newblock Degeneration of {K}\"ahler manifolds.
\newblock {\em Duke Math. J.}, 44(2):215--290, 1977.
\newblock URL: \url{http://projecteuclid.org/euclid.dmj/1077312231}.

\bibitem[Con00]{ConBaseChange}
B.~Conrad.
\newblock {\em Grothendieck duality and base change}, volume 1750 of {\em Lecture Notes in Mathematics}.
\newblock Springer-Verlag, Berlin, 2000.
\newblock \href {https://doi.org/10.1007/b75857} {\path{doi:10.1007/b75857}}.

\bibitem[dJ14]{deJongGrossSchoen}
R.~de~Jong.
\newblock Normal functions and the height of {G}ross-{S}choen cycles.
\newblock {\em Nagoya Math. J.}, 214:53--77, 2014.
\newblock \href {https://doi.org/10.1215/00277630-2413391} {\path{doi:10.1215/00277630-2413391}}.

\bibitem[EHN25]{EHNPoin}
K.~Eder, J.~Huerta, and S.~Noja.
\newblock Poincar\'e duality for supermanifolds, higher {C}artan structures and geometric supergravity, 2025.
\newblock \href {https://arxiv.org/abs/2312.05224} {\path{arXiv:2312.05224}}.

\bibitem[EV88]{EVDB}
H.~Esnault and E.~Viehweg.
\newblock Deligne-{B}e\u ilinson cohomology.
\newblock In {\em Be\u ilinson's conjectures on special values of {$L$}-functions}, volume~4 of {\em Perspect. Math.}, pages 43--91. Academic Press, Boston, MA, 1988.

\bibitem[FF14]{FFVMHS>0}
O.~Fujino and T.~Fujisawa.
\newblock Variations of mixed {H}odge structure and semipositivity theorems.
\newblock {\em Publ. Res. Inst. Math. Sci.}, 50(4):589--661, 2014.
\newblock \href {https://doi.org/10.4171/PRIMS/145} {\path{doi:10.4171/PRIMS/145}}.

\bibitem[Ful98]{FulInt}
W.~Fulton.
\newblock {\em Intersection theory}, volume~2 of {\em Ergebnisse der Mathematik und ihrer Grenzgebiete. 3. Folge. A Series of Modern Surveys in Mathematics [Results in Mathematics and Related Areas. 3rd Series. A Series of Modern Surveys in Mathematics]}.
\newblock Springer-Verlag, Berlin, second edition, 1998.
\newblock \href {https://doi.org/10.1007/978-1-4612-1700-8} {\path{doi:10.1007/978-1-4612-1700-8}}.

\bibitem[GAGP88]{GNPPHyp}
F.~Guill\'en, V.~Navarro Aznar, P.~Pascual Gainza, and F.~Puerta.
\newblock {\em Hyperr\'esolutions cubiques et descente cohomologique}, volume 1335 of {\em Lecture Notes in Mathematics}.
\newblock Springer-Verlag, Berlin, 1988.
\newblock Papers from the Seminar on Hodge-Deligne Theory held in Barcelona, 1982.
\newblock \href {https://doi.org/10.1007/BFb0085054} {\path{doi:10.1007/BFb0085054}}.

\bibitem[GL01]{GLBK}
T.~Geisser and M.~Levine.
\newblock The {B}loch-{K}ato conjecture and a theorem of {S}uslin-{V}oevodsky.
\newblock {\em J. Reine Angew. Math.}, 530:55--103, 2001.
\newblock \href {https://doi.org/10.1515/crll.2001.006} {\path{doi:10.1515/crll.2001.006}}.

\bibitem[God58]{Godement}
R.~Godement.
\newblock {\em Topologie alg\'ebrique et th\'eorie des faisceaux}, volume No. 13 of {\em Publications de l'Institut de Math\'ematiques de l'Universit\'e{} de Strasbourg [Publications of the Mathematical Institute of the University of Strasbourg]}.
\newblock Hermann, Paris, 1958.
\newblock Actualit\'es Scientifiques et Industrielles, No. 1252. [Current Scientific and Industrial Topics].

\bibitem[Gon02]{GonReg}
A.~B. Goncharov.
\newblock Explicit regulator maps on polylogarithmic motivic complexes.
\newblock In {\em Motives, polylogarithms and {H}odge theory, {P}art {I} ({I}rvine, {CA}, 1998)}, volume 3, I of {\em Int. Press Lect. Ser.}, pages 245--276. Int. Press, Somerville, MA, 2002.

\bibitem[Gor09]{GorBiext}
S.~Gorchinskiy.
\newblock Notes on the biextension of {C}how groups.
\newblock In {\em Motives and algebraic cycles}, volume~56 of {\em Fields Inst. Commun.}, pages 111--148. Amer. Math. Soc., Providence, RI, 2009.
\newblock \href {https://doi.org/10.1090/fic/056/05} {\path{doi:10.1090/fic/056/05}}.

\bibitem[GZ24]{GZFam}
Z.~Gao and S.-W. Zhang.
\newblock Heights and periods of algebraic cycles in families, 2024.
\newblock \href {https://arxiv.org/abs/2407.01304v3} {\path{arXiv:2407.01304v3}}.

\bibitem[Hai90]{HainBiext}
R.~Hain.
\newblock Biextensions and heights associated to curves of odd genus.
\newblock {\em Duke Math. J.}, 61(3), 1990.
\newblock \href {https://doi.org/10.1215/S0012-7094-90-06133-2} {\path{doi:10.1215/S0012-7094-90-06133-2}}.

\bibitem[Har66]{HartsRes}
R.~Hartshorne.
\newblock {\em Residues and duality}, volume No. 20 of {\em Lecture Notes in Mathematics}.
\newblock Springer-Verlag, Berlin-New York, 1966.
\newblock Lecture notes of a seminar on the work of A. Grothendieck, given at Harvard 1963/64, With an appendix by P. Deligne.

\bibitem[Har05]{HarBiext}
C.~Hardouin.
\newblock {\em Structure galoisienne des extensions it{\'e}r{\'e}es de modules diff{\'e}rentiels}.
\newblock PhD thesis, Paris 6, 2005.

\bibitem[Hat02]{Hatcher}
A.~Hatcher.
\newblock {\em Algebraic topology}.
\newblock Cambridge University Press, Cambridge, 2002.

\bibitem[HdJ15]{HdJAsym}
D.~Holmes and R.~de~Jong.
\newblock Asymptotics of the {N}\'eron height pairing.
\newblock {\em Math. Res. Lett.}, 22(5):1337--1371, 2015.
\newblock \href {https://doi.org/10.4310/MRL.2015.v22.n5.a5} {\path{doi:10.4310/MRL.2015.v22.n5.a5}}.

\bibitem[Hof09]{HofTri}
K.~R. Hofmann.
\newblock {\em Triangulation of locally semi-algebraic spaces}.
\newblock ProQuest LLC, Ann Arbor, MI, 2009.
\newblock Thesis (Ph.D.)--University of Michigan.
\newblock URL: \url{http://gateway.proquest.com/openurl?url_ver=Z39.88-2004&rft_val_fmt=info:ofi/fmt:kev:mtx:dissertation&res_dat=xri:pqdiss&rft_dat=xri:pqdiss:3382214}.

\bibitem[ili87]{BeiHeiCyc}
A.~Be\u ilinson.
\newblock Height pairing between algebraic cycles.
\newblock In {\em {$K$}-theory, arithmetic and geometry ({M}oscow, 1984--1986)}, volume 1289 of {\em Lecture Notes in Math.}, pages 1--25. Springer, Berlin, 1987.
\newblock \href {https://doi.org/10.1007/BFb0078364} {\path{doi:10.1007/BFb0078364}}.

\bibitem[Jan88]{JannDel}
U.~Jannsen.
\newblock Deligne homology, {H}odge-{${\mathscr{D}}$}-conjecture, and motives.
\newblock In {\em Be\u ilinson's conjectures on special values of {$L$}-functions}, volume~4 of {\em Perspect. Math.}, pages 305--372. Academic Press, Boston, MA, 1988.

\bibitem[Kas86]{KasVMHS}
M.~Kashiwara.
\newblock A study of variation of mixed {H}odge structure.
\newblock {\em Publ. Res. Inst. Math. Sci.}, 22(5):991--1024, 1986.
\newblock \href {https://doi.org/10.2977/prims/1195177264} {\path{doi:10.2977/prims/1195177264}}.

\bibitem[KL08]{KLAddHighChow}
A.~Krishna and M.~Levine.
\newblock Additive higher {C}how groups of schemes.
\newblock {\em J. Reine Angew. Math.}, 619:75--140, 2008.
\newblock \href {https://doi.org/10.1515/CRELLE.2008.041} {\path{doi:10.1515/CRELLE.2008.041}}.

\bibitem[KLMS06]{KLM}
M.~Kerr, J.~Lewis, and S.~Müller-Stach.
\newblock The {A}bel-{J}acobi map for higher {C}how groups.
\newblock {\em Compositio Math.}, 142(2):374--396, 2006.
\newblock URL: \url{https://www.cambridge.org/core/journals/compositio-mathematica/article/abeljacobi-map-for-higher-chow-groups/96E7B429B8B0E96C08890BE3BAC41724}.

\bibitem[Lan91]{LandRelChow}
S.~E. Landsburg.
\newblock Relative {C}how groups.
\newblock {\em Illinois J. Math.}, 35(4):618--641, 1991.
\newblock URL: \url{http://projecteuclid.org/euclid.ijm/1255987675}.

\bibitem[Lea90]{LeaExt}
D.~A. Lear.
\newblock {\em Extensions of normal functions and asymptotics of the height pairing}.
\newblock PhD thesis, 1990.
\newblock Copyright - Database copyright ProQuest LLC; ProQuest does not claim copyright in the individual underlying works; Last updated - 2023-09-12.
\newblock URL: \url{https://www.proquest.com/dissertations-theses/extensions-normal-functions-asymptotics-height/docview/303887228/se-2}.

\bibitem[Lev94]{LevRevisit}
M.~Levine.
\newblock Bloch's higher {C}how groups revisited.
\newblock Number 226, pages 10, 235--320. 1994.
\newblock $K$-theory (Strasbourg, 1992).

\bibitem[Lev01]{LevTec}
M.~Levine.
\newblock Techniques of localization in the theory of algebraic cycles.
\newblock {\em J. Algebraic Geom.}, 10(2):299--363, 2001.

\bibitem[Lev09]{LevCub}
Marc Levine.
\newblock Smooth motives.
\newblock In {\em Motives and algebraic cycles}, volume~56 of {\em Fields Inst. Commun.}, pages 175--231. Amer. Math. Soc., Providence, RI, 2009.
\newblock \href {https://doi.org/10.3390/a2041327} {\path{doi:10.3390/a2041327}}.

\bibitem[LL21]{LLMot}
Chao Li and Yifeng Liu.
\newblock Chow groups and {$L$}-derivatives of automorphic motives for unitary groups.
\newblock {\em Ann. of Math. (2)}, 194(3):817--901, 2021.
\newblock \href {https://doi.org/10.4007/annals.2021.194.3.6} {\path{doi:10.4007/annals.2021.194.3.6}}.

\bibitem[MS95]{MulBiext}
S.~M\"uller-Stach.
\newblock {$\bold C^*$}-extensions of tori, higher {C}how groups and applications to incidence equivalence relations for algebraic cycles.
\newblock {\em $K$-Theory}, 9(4):395--406, 1995.
\newblock \href {https://doi.org/10.1007/BF00961471} {\path{doi:10.1007/BF00961471}}.

\bibitem[Par23]{ParLoc}
J.~Park.
\newblock On localization for cubical higher {Chow} groups.
\newblock {\em Tohoku Mathematical Journal}, 75(2), jun 2023.
\newblock \href {https://doi.org/10.2748/tmj.20211221} {\path{doi:10.2748/tmj.20211221}}.

\bibitem[PS08]{PSMHS}
C.~Peters and J.~Steenbrink.
\newblock {\em Mixed {H}odge structures}, volume~52 of {\em Ergebnisse der Mathematik und ihrer Grenzgebiete. 3. Folge. A Series of Modern Surveys in Mathematics [Results in Mathematics and Related Areas. 3rd Series. A Series of Modern Surveys in Mathematics]}.
\newblock Springer-Verlag, Berlin, 2008.

\bibitem[Sai96]{SaitoANF}
M.~Saito.
\newblock Admissible normal functions.
\newblock {\em J. Algebraic Geom.}, 5(2):235--276, 1996.

\bibitem[Sei07]{SeiBier}
M.~Seibold.
\newblock {\em Bierweiterungen f{\"u}r algebraische Zykel und Poincar{\'e}b{\"u}ndel}.
\newblock PhD thesis, 2007.

\bibitem[SGA72]{SGA7}
{\em Groupes de monodromie en g\'eom\'etrie alg\'ebrique. {I}}, volume Vol. 288 of {\em Lecture Notes in Mathematics}.
\newblock Springer-Verlag, Berlin-New York, 1972.
\newblock S\'eminaire de G\'eom\'etrie Alg\'ebrique du Bois-Marie 1967--1969 (SGA 7 I), Dirig\'e{} par A. Grothendieck. Avec la collaboration de M. Raynaud et D. S. Rim.

\bibitem[Son25]{SongAsym}
Y.~Song.
\newblock Parametrization of geometric beilinson--bloch heights via adelic line bundles, 2025.
\newblock \href {https://arxiv.org/abs/2406.19912v3} {\path{arXiv:2406.19912v3}}.

\bibitem[SZ85]{SZVMHS}
J.~Steenbrink and S.~Zucker.
\newblock Variation of mixed {H}odge structure. {I}.
\newblock {\em Invent. Math.}, 80(3):489--542, 1985.
\newblock \href {https://doi.org/10.1007/BF01388729} {\path{doi:10.1007/BF01388729}}.

\bibitem[Tot92]{TotMilnorK}
B.~Totaro.
\newblock Milnor {K}-theory is the simplest part of algebraic {K}-theory.
\newblock {\em K-Theory}, 6(2):177--189, March 1992.
\newblock \href {https://doi.org/10.1007/BF01771011} {\path{doi:10.1007/BF01771011}}.

\bibitem[Wan98]{WangAsym}
B.~Wang.
\newblock Asymptotics of the {A}rchimedean height pairing.
\newblock {\em Amer. J. Math.}, 120(2):229--249, 1998.
\newblock URL: \url{http://muse.jhu.edu/journals/american_journal_of_mathematics/v120/120.2wang.pdf}.

\bibitem[Zei78]{ElZeiFond}
F.~El Zein.
\newblock Complexe dualisant et applications \`a{} la classe fondamentale d'un cycle.
\newblock {\em Bull. Soc. Math. France M\'em.}, (58):93, 1978.

\end{thebibliography}
\end{document}